\crefname{subsection}{subsection}{subsections}
\crefname{lem}{lemma}{lemmas}
\theoremstyle{definition}
\newtheorem{thm}{Theorem}[section]
\newtheorem{cor}[thm]{Corollary}
\newtheorem{prop}[thm]{Proposition}
\newtheorem{lem}[thm]{Lemma}
\newtheorem{quest}[thm]{Question}
\newtheorem{defn}[thm]{Definition}
\newtheorem{obs}[thm]{Observation}
\newtheorem{rmk}[thm]{Remark}
\newtheorem{claim}[thm]{Claim}
\newtheorem*{ack*}{Acknowledgements}
\newcommand{\pvh}{\textcolor{blue}}
\newcommand{\marius}{\textcolor{pink}}
\newcommand{\arc}{\mathfrak}
\newcommand{\co}{\operatorname{co}}
\newcommand{\hbad}{\mathcal{I}^{\text{bad}}_{100t^{-1}\ell}(\theta,\ell)}
\newcommand{\bad}{\mathcal{I}^{\text{bad}}_{3\ell}(\theta,\ell)}
\newcommand{\bads}{\mathcal{I}^{\text{bad}}_{s}(\theta,\ell)}
\newcommand{\jbads}{\mathcal{J}^{\text{bad}}_{s}(\theta,\ell)}
\newcommand{\jbad}{\mathcal{J}^{\text{bad}}_{3\ell}(\theta,\ell)}
\newcommand{\vbad}{\mathcal{I}^{\text{bad}}_{2\ell}(\theta,\ell)}
\newcommand{\jvbad}{\mathcal{J}^{\text{bad}}_{2\ell}(\theta,\ell)}
\newcommand{\rgood}{\mathcal{I}^{\text{good}}_{100t^{-1}\ell}(\theta,\ell)}
\newcommand{\good}{\mathcal{I}^{\text{good}}_{3\ell}(\theta,\ell)}
\newcommand{\goods}{\mathcal{I}^{\text{good}}_{s}(\theta,\ell)}
\newcommand{\jgoods}{\mathcal{J}^{\text{good}}_{s}(\theta,\ell)}
\newcommand{\jgood}{\mathcal{J}^{\text{good}}_{3\ell}(\theta,\ell)}
\newcommand{\pgood}{\mathcal{I}^{\text{good}}_{2\ell}(\theta,\ell)}
\newcommand{\jpgood}{\mathcal{J}^{\text{good}}_{2\ell}(\theta,\ell)}
\newcommand{\jj}{\mathcal{J}(\theta,\ell)}
\Crefname{subsection}{Section}{Sections}
\begin{document}
\title{Sharp quantitative stability of the planar Brunn-Minkowski inequality}
\author{Peter van Hintum \and Hunter Spink \and Marius Tiba}
\email{pllv2@cam.ac.uk, hspink@math.harvard.edu, mt576@dpmms.cam.ac.uk}
\thanks{The authors would like to thank their respective institutions Clare College, University of Cambridge, Harvard University, and Trinity Hall, University of Cambridge. }

\begin{abstract}
    
    We prove a sharp stability result for the Brunn-Minkowski inequality for  $A,B\subset\mathbb{R}^2$. Assuming that the Brunn-Minkowski deficit $\delta=|A+B|^{\frac{1}{2}}/(|A|^\frac12+|B|^\frac12)-1$ is sufficiently small in terms of $t=|A|^{\frac{1}{2}}/(|A|^{\frac{1}{2}}+|B|^{\frac{1}{2}})$,  there exist homothetic convex sets $K_A \supset A$ and $K_B\supset B$ such that
    $\frac{|K_A\setminus A|}{|A|}+\frac{|K_B\setminus B|}{|B|} \le C t^{-\frac{1}{2}}\delta^{\frac{1}{2}}$. The key ingredient is to show for every $\epsilon>0$, if $\delta$ is sufficiently small then $|\co(A+B)\setminus (A+B)|\le (1+\epsilon)(|\co(A)\setminus A|+|\co(B)\setminus B|)$.
    
\end{abstract}

\maketitle

\section{Introduction}
Given measurable sets $A,B\subset \mathbb{R}^n$, the Brunn-Minkowski inequality says
$$|A+B|^{\frac{1}{n}} \ge |A|^{\frac{1}{n}}+|B|^{\frac{1}{n}},$$ with equality for homothetic convex sets $A=\co(A)$ and $B=\co(B)$ (less a measure $0$ set). Here $A+B=\{a+b\mid a\in A,\text{ and }b\in B\}$ is the \emph{Minkowski sum}, and  $|\cdot|$ refers to the outer Lebesgue measure. Stability results for the Brunn-Minkowski inequality quantify how close $A,B$ are to homothetic convex sets $K_A,K_B$ in terms of
\begin{itemize}
    \item $\delta=\delta(A,B):=\frac{|A+B|^{\frac{1}{n}}}{|A|^{\frac{1}{n}}+|B|^{\frac{1}{n}}}-1$, the \emph{Brunn-Minkowski deficit}, and
    \item $t=t(A,B):=\frac{|A|^{\frac{1}{n}}}{|A|^{\frac{1}{n}}+|B|^{\frac{1}{n}}}$, the \emph{normalized volume ratio}.
\end{itemize}
Throughout the paper, $\delta$ and $\tau$ will refer to the above quantities.

The sharp stability question for the Brunn-Minkowski inequality, \Cref{introquestion} below, is one of the central open problems in the study of geometric inequalities, and has been studied intensely in recent years by Barchiesi and Julin \cite{Barchiesi}, Carlen and Maggi \cite{Oneconvex}, Christ \cite{Christ}, Figalli and Jerison \cite{Fig,BM,Semisum}, Figalli, Maggi and Mooney \cite{Euclidean}, Figalli, Maggi and Pratelli \cite{Figalli09,Figalli10amass}, and the present authors \cite{PeterHunterMarius}. We provide a more detailed history of the problem in \Cref{introbackground}.
\begin{quest}
\label{introquestion}For $n \ge 1$, what are the optimal exponents $a_n$, $b_n$ (prioritized in this order) such that the following is true for every $\tau \in (0,\frac{1}{2}]$? There are constants $C_n$ and $d_n(\tau)>0$ such that whenever $A,B\subset \mathbb{R}^n$ are measurable sets with $t \in [\tau,1-\tau]$ and $\delta \le d_n(\tau)$, there exist homothetic convex sets $K_A\supset A$ and $K_B\supset B$ such that
$$\frac{|K_A\setminus A|}{|A|}+\frac{|K_B\setminus B|}{|B|}\le C_n\tau^{-b_n}\delta^{a_n}.$$
\end{quest}
We prioritize the exponents $a_n,b_n$ in this order as if the inequality holds for $(a_n,b_n)$, then the inequality also holds for $(a_n',b_n')$ whenever $a_n<a_n'$ by taking $d_n'(\tau)$ sufficiently small.

For planar regions, taking $A=[0,t]\times [0,t(1+\epsilon)]$ and $B=[0,(1-t)(1+\epsilon)]\times [0,1-t]$ shows that $a_2\le \frac{1}{2}$ and $b_2 \ge \frac{1}{2}$.
Our main result, \Cref{sharpBM}, solves the sharp stability question for planar regions $A,B\subset \mathbb{R}^2$, showing that the optimal exponents are $(a_2,b_2)=(\frac{1}{2},\frac{1}{2})$. 
\begin{thm}
\label{sharpBM}
There are constants $C,d(\tau)>0$ such that if $A,B\subset \mathbb{R}^2$ are measurable sets with $t\in [\tau,1-\tau]$ and $\delta \le d(\tau)$, then there are homothetic convex sets $K_A\supset A$ and $K_B \supset B$ such that
$$\frac{|K_A\setminus A|}{|A|}+\frac{|K_B\setminus B|}{|B|} \le C\tau^{-\frac{1}{2}}\delta^{\frac{1}{2}}.$$
\end{thm}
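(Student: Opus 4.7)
The plan is to deduce Theorem~\ref{sharpBM} from the key ingredient highlighted in the abstract: for every $\epsilon>0$, there is $d(\tau,\epsilon)>0$ such that if $\delta\le d(\tau,\epsilon)$, then $|\co(A+B)\setminus(A+B)|\le(1+\epsilon)(\alpha+\beta)$, where I abbreviate $\alpha=|\co(A)\setminus A|$, $\beta=|\co(B)\setminus B|$, and $S=(|A|^{1/2}+|B|^{1/2})^2$. Since $\co(A+B)=\co(A)+\co(B)$, applying Brunn--Minkowski to the convex sets $\co(A),\co(B)$ gives $(|\co(A)|^{1/2}+|\co(B)|^{1/2})^2\le|\co(A+B)|$, while $|A+B|=(1+\delta)^2 S$ combined with the key ingredient gives $|\co(A+B)|\le(1+\delta)^2 S+(1+\epsilon)(\alpha+\beta)$. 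A Taylor expansion yields $(|\co(A)|^{1/2}+|\co(B)|^{1/2})^2\ge S+\alpha/t+\beta/(1-t)-O(S(\alpha/|A|+\beta/|B|)^2)$, where the crucial point is that $1/t=1+(1-t)/t$ and $1/(1-t)=1+t/(1-t)$ exceed $1$ by at least $\tau$ for $t\in[\tau,1-\tau]$.

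Combining these three estimates, after fixing $\epsilon=\tau/4$, gives $\alpha((1-t)/t-\epsilon)+\beta(t/(1-t)-\epsilon)\lesssim\delta S$, whose left side is bounded below by $\tfrac{\tau}{4}(\alpha+\beta)$; this produces the linear bound $\alpha/|A|+\beta/|B|\lesssim\tau^{-3}\delta$, which also absorbs the Taylor error provided $\delta\le c\tau^{6}$. Rearranging the same combined inequality in terms of the Brunn--Minkowski deficit $\delta_c$ of the convex pair $(\co(A),\co(B))$ produces $((1+\delta_c)^2-(1+\delta)^2)S\le(1+\epsilon)(\alpha+\beta)-(1+\delta_c)^2(\alpha/t+\beta/(1-t))$; the right side is non-positive for $\epsilon,\delta_c\le\tau/4$, so $\delta_c\le\delta$.

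Finally, I invoke the sharp Brunn--Minkowski stability for convex bodies in the plane (with rate $\tau^{-1/2}\delta_c^{1/2}$, which the usual rectangle examples show is sharp) applied to $(\co(A),\co(B))$: this produces homothetic convex sets $K_A\supset\co(A)$, $K_B\supset\co(B)$ with $|K_A\setminus\co(A)|/|\co(A)|+|K_B\setminus\co(B)|/|\co(B)|\lesssim\tau^{-1/2}\delta_c^{1/2}\le\tau^{-1/2}\delta^{1/2}$, using that $\co(A),\co(B)$ have normalized volume ratio within $O(\tau^{-3}\delta)$ of $t$. Then $K_A\supset A$, $K_B\supset B$, and $|K_A\setminus A|/|A|\le|K_A\setminus\co(A)|/|A|+\alpha/|A|$ together with the analogous bound for $B$ gives $|K_A\setminus A|/|A|+|K_B\setminus B|/|B|\lesssim\tau^{-1/2}\delta^{1/2}+\tau^{-3}\delta\lesssim\tau^{-1/2}\delta^{1/2}$ whenever $\delta\le c\tau^{5}$, which is the desired estimate. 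The main obstacle is establishing the key ingredient itself with sharp constant $1+\epsilon$: a weaker constant would prevent the $\alpha+\beta$ term from being dominated by $\alpha/t+\beta/(1-t)$ in the crucial cancellation, breaking the bound $\delta_c\le\delta$ and hence the whole argument.
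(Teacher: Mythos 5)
Your overall strategy is essentially the same as the paper's: combine Theorem~\ref{mainthm} (to control $|\co(A+B)\setminus(A+B)|$) with Brunn--Minkowski for the convex hulls, extract both $\delta_{\mathrm{conv}}\lesssim\delta$ and $|\co(A)\setminus A|/|A|+|\co(B)\setminus B|/|B|\lesssim\tau^{-O(1)}\delta$, and then apply the sharp convex stability of Figalli--Maggi--Pratelli to the pair $(\co(A),\co(B))$. The paper's bookkeeping is cleaner: instead of Taylor-expanding square roots (with quadratic error terms you then have to absorb in two places), it computes $\delta-\delta_{\mathrm{conv}}$ exactly using $\sqrt{a}-\sqrt{b}=(a-b)/(\sqrt{a}+\sqrt{b})$, which yields $\delta-\delta_{\mathrm{conv}}\ge m_A\frac{|\co(A)\setminus A|}{|\co(A)|}+m_B\frac{|\co(B)\setminus B|}{|\co(B)|}$ with $m_A,m_B\ge\tau/5$ in one stroke, and also a sharper $\tau^{-1}\delta$ (vs. your $\tau^{-3}\delta$, which is wasteful but of course still sufficient). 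Your ``so $\delta_c\le\delta$'' also silently drops the Taylor error and silently assumes $\delta_c\le\tau/4$ a priori, but both are easily repaired by taking $d_\tau$ small enough and settling for $\delta_c\le 2\delta$, which is all you use.

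The one genuine gap is at the final step. Figalli--Maggi--Pratelli for convex bodies bounds the Fraenkel asymmetry $\alpha$ (a symmetric-difference distance after optimal translation and dilation), not the containment quantity $\omega$ that you need; you assert it ``produces homothetic convex sets $K_A\supset\co(A)$, $K_B\supset\co(B)$'', but the asymmetry bound only produces a homothetic copy of $\co(B)$ close to $\co(A)$ in measure, with no containment. The elementary inequality $\alpha\le 2\omega$ goes the wrong way, and the reverse $\omega\lesssim\alpha$ is false in general for non-convex sets. The paper supplies exactly the missing bridge in Appendix~\ref{OmegaEquiv}, proving $\omega(E,F)\le 21\,\alpha(E,F)$ for planar convex $E,F$ with small deficit; your argument needs this (or an equivalent) to be complete.
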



Our key result in proving \Cref{sharpBM} is a strong generalization to arbitrary sets $A,B$ of a conjecture \cite{Semisum} of Figalli and Jerison  for $A=B$  that $|\co(A)\setminus A|=O(\delta)$ for $\delta$ sufficiently small.  The original conjecture was recently proved by the present authors in \cite{PeterHunterMarius}. The generalization we now prove involves a completely different analysis to \cite{PeterHunterMarius}, and we are unaware of a similar approach used previously in the literature. 

\begin{thm}
\label{mainthm}
For all $\epsilon,\tau>0$ there is a constant $d_{\tau}(\epsilon)>0$ such that the following is true. Suppose that $A,B \subset \mathbb{R}^2$ are measurable sets with $t\in [\tau,1-\tau]$ and $\delta \le d_{\tau}(\epsilon)$. Then 
$$\left|\co(A+B)\setminus (A+B)\right| \le \left(1+\epsilon\right)\left(|\co(A)\setminus A|+|\co(B)\setminus B|\right).$$
\end{thm}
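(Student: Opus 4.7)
The plan is to decompose the non-convexity $\co(X)\setminus X$ for $X\in\{A,B,A+B\}$ into connected components, which I will call \emph{pockets}, parametrize each pocket by its bounding lid on $\partial\co(X)$ (with direction $\theta\in S^1$ and length $\ell>0$), and show that in each direction the pocket area of $A+B$ is essentially additive in the pocket areas of $A$ and $B$. As preprocessing, I would invoke existing quantitative Brunn-Minkowski stability (such as \cite{PeterHunterMarius}) to reduce to the regime where, after a common affine normalization, both $\co(A)$ and $\co(B)$ are close to homothetic copies of a single convex body, and the total non-convexities $|\co(A)\setminus A|, |\co(B)\setminus B|$ are bounded by a small function of $\delta$. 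This lets us focus on the fine structure of pockets while treating the convex hulls as essentially fixed.

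Next I would exploit the identity $\co(A+B)=\co(A)+\co(B)$ and directional slicing: for each $\theta\in S^1$, the slice of $A+B$ by a line $\{x\cdot\theta=h\}$ contains the 1D Minkowski sum of slices of $A$ and $B$ at all $h_1+h_2=h$, and the 1D Brunn-Minkowski gives $|A^\theta_{h_1}+B^\theta_{h_2}|\ge|A^\theta_{h_1}|+|B^\theta_{h_2}|$. Parametrizing the pockets of $A$ by $\mathcal{I}(\theta,\ell)$ and those of $B$ by $\mathcal{J}(\theta,\ell)$ (as in the preamble), I would show that each pocket of $A+B$ with lid of direction $\theta$ arises from a compatible pair of pockets $(\mathcal{P}_A,\mathcal{P}_B)$ of $A$ and $B$ whose lids both point in direction $\theta$ and whose projections in the direction $\theta^\perp$ overlap; this ``matching'' step is what reduces a two-dimensional problem to a family of 1D problems parametrized by $\theta$.

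The heart of the argument is a multi-scale classification of pockets, following the macros $\mathcal{I}^{\text{bad}}_s(\theta,\ell)$ with $s\in\{2\ell, 3\ell, 100t^{-1}\ell\}$, where a pocket of lid length $\ell$ is declared \emph{bad at scale $s$} if it extends at least distance $s$ perpendicular to its lid. I would expect the proof to proceed by iteratively eliminating bad pockets from the largest scale $100t^{-1}\ell$ inward: a pocket this deep is a macroscopic ``finger'' whose very existence forces a large Brunn-Minkowski deficit, contradicting smallness of $\delta$; at the smaller scales $2\ell,3\ell$ bad pockets must then be shown to contribute only a vanishing fraction of the total non-convexity. The remaining good pockets have bounded aspect ratio and are controlled by direct 1D Brunn-Minkowski along the lid direction: since 1D Minkowski sums are exactly additive in measure, a good pocket of $A+B$ at direction $\theta$ contributes area at most the sum of its matching pockets in $A$ and $B$, with a relative error $O(\epsilon)$ arising from boundary mismatches. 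Integrating over $\theta$ (and summing over lid length scales $\ell$) yields the claimed $(1+\epsilon)$ bound.

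The main obstacle is the classification step: translating the global smallness of $\delta$ into a structural constraint on every individual pocket requires a delicate multi-scale budgeting argument, since $\delta$ is a single scalar yet many pockets at many scales must be controlled simultaneously. Maintaining the sharp constant $(1+\epsilon)$ — rather than a weaker $C$ — is what forces this careful iterative elimination of bad pockets across the three scales $2\ell, 3\ell, 100t^{-1}\ell$ instead of a single crude bound; this is presumably the novel ingredient that the authors describe as ``a completely different analysis'' from their earlier work on the case $A=B$.
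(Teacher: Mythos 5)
Your proposal diverges from the paper's method in several essential ways, and I think the core ``matching'' step has a genuine gap.

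\textbf{What the paper actually does.} After passing to the equal-area form $D_t=tA+(1-t)B$ and an affine normalization making the maximal inscribed triangle of $K=\co(A\cup B)$ unit-equilateral, the paper never decomposes $\co(X)\setminus X$ into connected pockets. Instead it shows via an averaging argument (Observation 2.8, using nothing about slices) that $\co(D_t)\setminus D_t$ is contained in a thin radial collar near $\partial\co(D_t)$, of radial thickness roughly $\sqrt{\gamma}$ everywhere and $\xi\sqrt\gamma$ on ``good'' arcs. Good/bad is a classification of \emph{boundary points} of $\co(D_t)$: a point $p$ is $(\theta,\ell)$-good if $\partial\co(D_t)$ is nearly straight at scale $\ell$ near $p$ (there exist $q,r$ at distance $\ge\ell$ with $\angle pqr\ge 180^\circ-\theta$), and the macros $\mathcal{I}^{\text{bad}}_s(\theta,\ell)$ denote points within Euclidean distance $s$ of a bad point. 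Your reading of these macros — a pocket being ``bad at scale $s$'' if it extends depth $\ge s$ — does not match the paper; the three values $s\in\{2\ell,3\ell,100t^{-1}\ell\}$ are successive enlargements of the bad set used for technical purposes, not a multiscale pocket-depth classification. The collar is then covered by parallelograms erected on short straight arcs of $\partial\co(D_t)$, pointing roughly inward in one of $M$ fixed directions. The essential structural input is the classical fact that the edges of $\partial\co(A+B)$ are the edges of $\partial\co(A)$ and $\partial\co(B)$ reassembled in slope order, and each edge is an edge-of-one plus vertex-of-the-other. This gives each covering parallelogram $R_{\arc q}$ on $\partial\co(D_t)$ a preimage parallelogram $R_{\arc q,A}\subset\mathbb{R}^2$ erected on $\partial\co(A)$ (or on $\partial\co(B)$), so that $R_{\arc q}\setminus D_t$ is contained in a translate of $R_{\arc q,A}\setminus A$. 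Proving the theorem then reduces to showing the preimage parallelograms are nearly disjoint and nearly all inside $\co(A),\co(B)$ — that is what the good/bad analysis and the three scales are actually for. There is no 1D Brunn–Minkowski and no pocket matching anywhere in this argument.

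\textbf{Where your approach breaks down.} The step ``each pocket of $A+B$ with lid direction $\theta$ arises from a compatible pair of pockets of $A$ and $B$ whose lids both point in direction $\theta$'' is not true and I do not see how to repair it. Pockets of $A+B$ do not biject with pocket pairs: a single pocket of $A$ alongside a convex $B$ already yields a deficit in $A+B$ with no partner pocket in $B$; pockets can also ``fill in'' (if $B$ is convex and wide enough in the lid direction of $A$'s pocket, the pocket of $A+B$ shrinks or disappears), and a pocket of $A+B$ can originate from the \emph{boundary interleaving} of $\co(A)$ and $\co(B)$ rather than from any individual pocket. The theorem statement itself is a one-sided inequality, not an additivity statement, precisely because no such bijection exists. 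Your proposed 1D slicing also faces a separate problem: for fixed $\theta$, slicing does recover $|\co(X)\setminus X|=\int_h|\co(X)^\theta_h\setminus X^\theta_h|\,dh$ (so integrating over $\theta$ over-counts rather than aggregates), and 1D Brunn–Minkowski on $A^\theta_{h_1}+B^\theta_{h_2}$ gives a lower bound on $|(A+B)^\theta_h|$ but no control on $|\co(A+B)^\theta_h|$ — which depends on the global convex-hull geometry, not on any single pair $(h_1,h_2)$. In short, your proposal is missing the structural input that makes the theorem provable in two dimensions: the edge-decomposition of $\partial\co(A+B)$ into edges of $\partial\co(A)$ and $\partial\co(B)$, and the covering of the collar by translated parallelograms that realize the deficit of $D_t$ as a sub-sum of the deficits of $A$ and $B$.
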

We note that \Cref{mainthm} with $\mathbb{R}^2$ replaced with $\mathbb{R}^n$ is false. Also,
taking $A=B=[0,1]^2 \cup\{(0,1+\lambda)\}$ shows that $(1+\epsilon)$ can't be replaced with anything smaller. We will ultimately prove \Cref{sharpBM} using \Cref{mainthm} with $\epsilon=\frac{\tau}{2}$.

\subsection{Background}
\label{introbackground}
In the literature, two measures  for quantifying how close $A,B$ are to homothetic convex sets have been introduced. The \emph{Fraenkel asymmetry index} is defined to be
$$\alpha(A,B)=\inf_{x \in \mathbb{R}^n}\frac{|A\mathrel{\Delta} (s\cdot \co(B)+x)|}{|A|}$$
where $s$ satisfies $|A|=|s\cdot \co(B)|$. The other measure introduced by Figalli and Jerison in \cite{BM} is $$\omega(A,B)=\min_{\substack{K_A \supset A, K_B\supset B\\K_A,K_B\text{ homothetic convex sets}}}\max\left\{\frac{|K_A\setminus  A|}{|A|},\frac{|K_B \setminus B|}{|B|}\right\}.$$ 
Providing an upper bound for $\omega$ is stronger than providing an upper bound for $\alpha$ as we always have $\alpha \le 2\omega$.
We note that in $\mathbb{R}^2$ when $A,B$ are both convex and $\delta$ is bounded, there is a reverse inequality (see \Cref{OmegaEquiv}).

In a landmark paper, Figalli and Jerison \cite[Theorem 1.3]{BM} showed the most general stability result for the Brunn-Minkowski inequality, with computable suboptimal exponents on $\tau$ and $\delta$, and with the exponent of $\delta$ depending on $\tau$ (which we rephrase for the convenience of the reader).
\begin{thm}(Figalli and Jerison \label{FigJerBm}\cite[Theorem 1.3]{BM})
There exist computable constants $a_n(\tau),b_n$ such that the following is true. There are computable constants $C_n$ and $d_n(\tau)>0$ such that whenever $A,B\subset \mathbb{R}^n$ with $t \in [\tau,1-\tau]$ and $\delta \le d_n(\tau)$, there exist homothetic convex sets $K_A\supset A$ and $K_B \supset B$ such that
$$\frac{|K_A\setminus A|}{|A|}+\frac{|K_B\setminus B|}{|B|} \le C_n \tau^{-b_n}\delta^{a_n(\tau)}.$$
\end{thm}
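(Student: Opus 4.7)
The plan is to follow a slicing and induction paradigm. The starting observation is that $A+B\subset \co(A)+\co(B)$, so the smallness of $\delta$ controls both the deficit $|(\co(A)+\co(B))\setminus(A+B)|$ and the Brunn--Minkowski deficit of the pair $(\co(A),\co(B))$ themselves. Invoking the sharp stability for convex sets of Figalli--Maggi--Pratelli, I would first conclude that, after rescaling, $\co(A)$ and $\co(B)$ are quantitatively close to a single convex shape. The residual task, which accounts for essentially all of the difficulty, is to show that $|\co(A)\setminus A|+|\co(B)\setminus B|$ is polynomially small in $\delta$ with exponents depending on $\tau$ and $n$.

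For this residual bound I would induct on $n$. The base case $n=1$ is direct: $A+B$ lies inside an interval of length $|\co(A)|+|\co(B)|$, and a continuous Freiman $3k{-}4$ type argument yields $|\co(A)\setminus A|+|\co(B)\setminus B|\le C\tau^{-c_1}\delta^{c_1'}(|A|+|B|)$ for explicit positive constants $c_1, c_1'$ (one may in fact take $c_1'=1$, though this is not needed).

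For $n\ge 2$, choose a coordinate direction $e$ (for instance along the longest axis of the John ellipsoid of $\co(A)$) and slice. Write $A_s=\{x\in\mathbb{R}^{n-1}:(x,s)\in A\}$, similarly $B_s$ and $(A+B)_s$. The inclusion $A_s+B_{s'}\subset (A+B)_{s+s'}$ combined with Fubini and the $(n{-}1)$-dimensional Brunn--Minkowski inequality gives that $\delta$ dominates (i) an averaged $(n{-}1)$-dimensional deficit of the fiber pairs $(A_s,B_s)$ after optimal alignment of the $s$-coordinates, and (ii) a $1$-dimensional deficit of the projections $\pi_e A, \pi_e B$. Consequently, on a set of slices of measure $1-O(\delta^{\alpha})$ the fibers have polynomially small $(n{-}1)$-dimensional deficit, and the projections are intervals up to a small defect. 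Applying the inductive hypothesis on good slices and the $n=1$ case to the projections yields, for each good $s$, homothetic convex approximations $K^A_s\supset A_s, K^B_s\supset B_s$ with controlled error.

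The main obstacle is upgrading this fiber-by-fiber information to a global convex containment for $A$ and $B$. Concretely, the homothety ratios $r_A(s)$ and centers $c_A(s)$ of $K^A_s$ arising from the inductive step a priori vary erratically in $s$, but a genuine convex body with axis $e$ forces $r_A(s)$ to be a concave function of $s$ (an $(n{-}1)$-th root of a concave volume profile, by Brunn's concavity principle). To close the induction, I would argue that any deviation of $r_A$ from concavity is paid for in extra $n$-dimensional deficit: if two slabs disagree significantly, then restricting $A$ and $B$ to those slabs produces a sub-configuration whose Brunn--Minkowski deficit exceeds $\delta$, contradiction. This trading step is the delicate one, both because it requires a polynomial stability form of Brunn's concavity and because it leaks a factor that depends on $\tau$ (since bad slices of small measure must be discarded, and the smaller $\tau$, the larger the relative damage). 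It is also the step that forces the exponent $a_n(\tau)$ to degrade geometrically with $n$, as each inductive level compounds the polynomial loss. Once controlled, one assembles $K_A$ as a small convexification of $\bigcup_s K^A_s\times\{s\}$ and analogously $K_B$, completing the induction.
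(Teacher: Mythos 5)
This statement is not proved in the paper at hand; it is quoted verbatim from Figalli and Jerison \cite{BM} and invoked as a black box (for instance in \Cref{gammato0} and in \Cref{implicationsection}). So there is no ``paper's own proof'' to compare against; I will instead assess your proposal as a candidate argument for the cited result.

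The broad architecture you describe---a one-dimensional Freiman $3k{-}4$ base case (which you state correctly: in one dimension $|\co(A)\setminus A|+|\co(B)\setminus B|\le 2\delta(|A|+|B|)$ is a direct consequence of the continuous Freiman theorem), slicing with Fubini, and induction on dimension---does capture the spirit of the Figalli--Jerison argument, which likewise builds on the $A=B$ slicing strategy of \cite{Fig}. However, there are two genuine gaps. First, your opening reduction is circular. The inclusion $A+B\subset\co(A)+\co(B)$ gives no control whatsoever on $|(\co(A)+\co(B))\setminus(A+B)|$ or on $\delta(\co(A),\co(B))$: indeed, $\co(A)+\co(B)=\co(A+B)$, and the bound $|\co(A+B)\setminus(A+B)|\lesssim\delta^{a}$ is essentially equivalent to the theorem you are trying to prove (and the sharper pointwise version is exactly \Cref{mainthm} of the present paper, which the authors stress is \emph{false} in dimension $n\ge 3$). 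Likewise, bounding $\delta(\co(A),\co(B))$ requires bounding $|\co(A+B)|^{1/n}-|A+B|^{1/n}$, so invoking Figalli--Maggi--Pratelli at the outset presupposes what must be established. One must instead prove the residual bound on $|\co(A)\setminus A|+|\co(B)\setminus B|$ first and then deduce the FMP hypothesis, or, as \cite{BM} does, avoid separating the two tasks at all.

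Second, and more fundamentally, the ``trading step''---converting a quantitative failure of Brunn-type concavity of the slice radii $r_A(s)$ into a lower bound on the deficit $\delta$---is precisely the hard part, and you have not supplied a mechanism for it. The difficulty is not merely bookkeeping: two slabs with inconsistent fiber sizes do not immediately produce a sub-configuration with deficit exceeding $\delta$, because the bad slices may be interspersed arbitrarily, the fiber-level homothetic approximations may have drifting centers as well as drifting radii (so the alignment issue is two-parameter), and the inductive control is only on a set of slices of large measure, leaving a ``small-measure but large-error'' escape that must be ruled out separately. The actual Figalli--Jerison argument handles this through a delicate multi-stage iteration (careful normalization, cube/box decompositions, an explicit quantitative form of symmetrization along each coordinate axis, and a bootstrap that simultaneously improves the approximating convex bodies and the deficit estimates) and is where nearly all of the length of \cite{BM} goes. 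Without an argument for this step the proposal is a plausible outline, not a proof.
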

 This naturally gives rise to \Cref{introquestion}, asking for the optimal exponents of $\delta$ and $\tau$, prioritized in this order. This question, with $A,B$ restricted to various sub-classes of geometric objects, is the subject of a large body of literature. 
 Our main result \Cref{sharpBM} proves sharp stability in the case $n=2$ for arbitrary measurable $A,B$. 

Prior to \cite{BM}, Christ \cite{Christ} had proved a non-computable non-polynomial bound involving $\delta$ and $\tau$ via a compactness argument. When $A$ and $B$ are convex, the optimal inequality $\alpha\le C_n\tau^{-\frac{1}{2}}\delta^{\frac{1}{2}}$ was obtained by Figalli, Maggi, and Pratelli in \cite{Figalli09,Figalli10amass}. When $B$ is a ball and $A$ is arbitrary, the optimal inequality $\alpha\le C_n\tau^{-\frac{1}{2}}\delta^{\frac{1}{2}}$ was obtained by Figalli, Maggi, and Mooney in \cite{Euclidean}. We note that this particular case is intimately connected with stability for the isoperimetric inequality. When just $B$ is convex the (non-optimal) inequality 
$\alpha\le C_n\tau^{-(n+\frac{3}{4})}\delta^{\frac{1}{4}}$ was obtained by Carlen and Maggi in \cite{Oneconvex}. Finally, Barchiesi and Julin \cite{Barchiesi} showed that when just $B$ is convex, we have the optimal inequality $\alpha\le C_n\tau^{-\frac{1}{2}}\delta^{\frac{1}{2}}$, subsuming these previous results.

Before their general result for distinct sets $A,B$ in \cite {BM}, Figalli and Jerison \cite{Fig} had considered the case $A=B$ and gave a polynomial upper bound $\omega \le  C_n\delta^{a_n}$. Later, in \cite{Semisum}, they conjectured the sharp bound $\omega \le C_n\delta$ when $A=B$, and proved it in dimensions 2 and 3 using an intricate analysis which unfortunately does not extend to higher dimensions. Afterwards, Figalli and Jerison suggested a stronger conjecture that $\omega \le C_n\tau^{-1}\delta$ for $A,B$ homothetic regions, which was proved by the present authors in \cite{PeterHunterMarius}.

\subsection{Outline of Paper}\label{Outline}


In \Cref{Setup}, we give a reformulation of \Cref{mainthm}, make some simplifications and general observations, and give definitions which will be used throughout the remainder of the paper. Simplifications include assuming $A,B$ are finite unions of polygonal regions so the vertices of $\partial\co(A),\partial\co(B)$ are contained in $A,B$ respectively, and that they are translated in a specific way so that $\co(A)$ and $\co(B)$ contain the origin $o$.

In \Cref{GuaranteedRegionsSection}, by an averaging argument we show that $(1-4\tau^{-1}\sqrt{\gamma})\co(A+B)\subset A+B$, where $\gamma=|\co(A)\setminus A|+|\co(B)\setminus B|$, i.e. for every $x \in \partial \co(A+B)$, we have $(1-4\tau^{-1}\sqrt{\gamma})ox \subset A+B$.

In \Cref{decgoodbadsection}, we introduce a partition of $\partial \co(A+B)$ into \textbf{good} arcs and \textbf{bad} arcs. We think of good arcs as being the parts of the boundary of $\co(A+B)$ which are straight (or close to straight). We show that a very small part of the boundary $\partial \co(A+B)$ is covered by bad arcs.

In \Cref{xirootgammasection}, we show for $x$ in a good arc of $\partial \co(A+B)$, we can in fact guarantee that $(1-\xi\sqrt{\gamma})ox$ lies in $A+B$ for any small $\xi$ (provided small $d_\tau$). Thus $\co(A+B)\setminus A+B$ lies in a thickened boundary $\Lambda$ of $\partial \co(A+B)$, which is thinner near the good arcs.

In \Cref{paralelegramcoveringsection,preimagesABsection}, we set up the following method for proving $|\co(A+B)\setminus (A+B)| \le (1+\epsilon)(|\co(A)\setminus A|+|\co(B)\setminus B|)$.

The edges of $\partial \co(A+B)$ are precisely the edges of $\partial \co(A)$ and $\partial \co(B)$ attached one after the other ordered by slope. Moreover, every edge of $\partial \co(A+B)$ is the Minkowski sum of an edge of $\partial \co(A)$ with a vertex of $\partial \co(B)$ or vice versa. We subdivide $\partial\co(A+B)$ into tiny stright arcs $\mathcal{J}$, and partition these arcs into collections $\mathcal{A}$ and $\mathcal{B}$ accordingly. We note that the arcs of $\mathcal{A}$ can be reassembled to $\partial\co(A)$ and the arcs of $\mathcal{B}$ can be reassembled to $\partial\co(B)$, in the same orders as they appear in $\partial \co(A+B)$.

We erect on each arc $\arc{q}\in \mathcal{J}$ a parallelogram $R_{\arc{q}}$ pointing roughly towards the origin such that these parallelograms cover the thickened boundary $\Lambda$. We ensure that we use a constant number of directions ($1000$ suffices), such that the $R_{\arc{q}}$s with the same directions occur in contiguous arcs of $\partial \co(A+B)$. The heights of the parallelograms will be roughly on the order of $\sqrt{\gamma}$  if $\arc{q}$ lies in a bad arc, and $\xi\sqrt{\gamma}$ if $\arc{q}$ lies in a good arc. Each parallelogram $R_{\arc{q}}$ with $\arc{q}\in\mathcal{A}$ is the Minkowski sum of a parallelogram $R_{\arc{q},A}$ erected on the corresponding segment of $\partial \co(A)$ with a vertex $p_{\arc{q},B}\in \partial \co(B)\cap B$. Similarly for $\arc{q}\in \mathcal{B}$.

This construction allows us to cover the thickened boundary $\Lambda$ of $\partial \co(A+B)$ with translates of small regions erected on $\partial \co(A)$ and $\partial co(B)$ as follows:
\begin{align*}
    \Lambda \subset \bigcup_{\arc{q}\in \mathcal{A}} (R_{\arc{q},A}+p_{\arc{q},B})\cup \bigcup_{\arc{q}\in \mathcal{B}} (p_{\arc{q},A}+R_{\arc{q},B}).
    \end{align*}
    Therefore, we can cover $\co(A+B)\setminus (A+B)$ as follows:
\begin{align*}
    \co(A+B)\setminus (A+B) \subset \bigcup_{\arc{q}\in \mathcal{A}} ((R_{\arc{q},A}\setminus A)+p_{\arc{q},B})\cup \bigcup_{\arc{q}\in \mathcal{B}} (p_{\arc{q},A}+(R_{\arc{q},B}\setminus B)).
\end{align*}
If we have subsets $\mathcal{A}'\subset \mathcal{A}$ and $\mathcal{B}'\subset \mathcal{B}$ such that $\{R_{\arc{q},A}\}_{\arc{q}\in \mathcal{A}'}$ are disjoint and contained in $\co(A)$ and analogously $\{R_{\arc{q},A}\}_{\arc{q}\in \mathcal{B}'}$ are disjoint and contained in $\co(B)$, then we obtain an inequality
$$|\co(A+B)\setminus A+B| \le |\co(A)\setminus A|+|\co(B)\setminus B|+\sum_{\arc{q}\in \mathcal{A}\setminus \mathcal{A}'}|R_{\arc{q},A}|+\sum_{\arc{q}\in \mathcal{B}\setminus \mathcal{B}'}|R_{\arc{q},B}|.$$
Hence to prove \Cref{mainthm}, it suffices to show that we can find such $\mathcal{A}'$ and $\mathcal{B}'$ with $$\sum_{\arc{q}\in \mathcal{A}\setminus \mathcal{A}'}|R_{\arc{q},A}|+\sum_{\arc{q}\in \mathcal{B}\setminus \mathcal{B}'}|R_{\arc{q},B}| \le \epsilon (|\co(A)\setminus A|+|\co(B)\setminus B|).$$
In \Cref{farweightedaveragessection} we show that bad arcs of $\partial \co(A+B)$ are close in angular distance to the corresponding arcs in $\partial \co(A)$ and $\partial \co(B)$. This result is crucial for \Cref{juttingsection,boundingoverlappingsection} where we bound the areas of the parallelograms we have to remove to create $\mathcal{A}'$ and $\mathcal{B}'$.

In \Cref{juttingsection}, we use  \Cref{farweightedaveragessection} to show that parallelograms $R_{\arc{q},A} \not \subset \co(A)$ and $R_{\arc{q},B} \not \subset B$ have $\arc{q}$ on a good arc. This is then used to show that the area of parallelograms not contained in $\co(A)$ or $\co(B)$ is bounded roughly by $\xi^2 \gamma$.


In \Cref{boundingoverlappingsection} we use \Cref{farweightedaveragessection} to show that parallelograms $R_{\arc{q},A}$ and $R_{\arc{r},A}$ that intersect non-trivially have at least one of $\arc{q}$ and $\arc{r}$ on a good arc. This allows us to remove only good parallelograms to ensure disjointness. We conclude that the area of parallelograms we need to remove is bounded by roughly $\xi \gamma$. 

In
\Cref{puttingtogethersection} we complete the proof of \Cref{mainthm} by synthesizing our bounds to deduce the final inequality. In \Cref{implicationsection} we show how \Cref{mainthm} implies \Cref{sharpBM}. Finally, we add an appendix with the proof that the measures $\alpha$ and $\omega$ are commensurate.

\subsection{Acknowledgements}
The authors would like to thank their supervisor Professor B\'ela Bollob\'as for his continuous support.

\section{Setup}
\label{Setup}
In this section, we collect together the preliminaries we need to start proving \Cref{mainthm}. In \Cref{equalareaformulationsubsection} we introduce an equal area reformulation of \Cref{mainthm}. In \Cref{prelimaffinesubsection} we apply a preliminary affine transformation to $\mathbb{R}^2$ and collect facts about the resulting lengths and areas. In \Cref{defsubsection} we collect the main definitions which will be used throughout the body of the paper. Finally, in \Cref{genobssubsection} we collect general observations which we will use frequently throughout.
\subsection{Equal area reformulation}
\label{equalareaformulationsubsection}
We will primarily work with the equivalent equal area reformulation \Cref{mainthm'} of \Cref{mainthm}.
\begin{defn}
For $A,B\subset \mathbb{R}^2$ measurable sets and $t \in [0,1]$, define 
$$D_t=tA+(1-t)B.$$
\end{defn}
\begin{thm}
\label{mainthm'}
For $\tau \in (0,\frac{1}{2}]$, there are constants $d_{\tau}=d_{\tau}(\epsilon)>0$ such that the following is true. Let $A,B\subset \mathbb{R}^2$ be measurable sets with $|A|=|B|=V$, let $t$ a parameter satisfying $t \in [\tau,\frac{1}{2}]$, and suppose that $|D_t|\le(1+ d_{\tau}(\epsilon))^2V$. Then
$$|\co(D_t)\setminus D_t| \le (1+\epsilon)(t^2 |\co(A)\setminus A|+(1-t)^2 |\co(B)\setminus B|).$$
\end{thm}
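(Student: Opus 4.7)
The plan is to show that \Cref{mainthm'} is equivalent to \Cref{mainthm} via a trivial rescaling, and then outline how the remainder of the paper will prove \Cref{mainthm} (equivalently, \Cref{mainthm'}) directly. For the equivalence, apply \Cref{mainthm} to the rescaled sets $A' := tA$ and $B' := (1-t)B$. Then $A' + B' = tA + (1-t)B = D_t$, and $|A'|^{1/2} = tV^{1/2}$, $|B'|^{1/2} = (1-t)V^{1/2}$, so the normalized volume ratio of $(A', B')$ equals $t \in [\tau, \tfrac{1}{2}]$ and the Brunn-Minkowski deficit equals $|D_t|^{1/2}/V^{1/2} - 1 \le d_\tau(\epsilon)$. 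Since $\co(sX) = s\co(X)$ and planar areas scale by $s^2$, the conclusion of \Cref{mainthm} applied to $(A', B')$ reads exactly as the inequality in \Cref{mainthm'}.

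To prove \Cref{mainthm} itself, I would follow the strategy of \Cref{Outline}. The first step is the averaging argument of \Cref{GuaranteedRegionsSection}, which produces the containment $(1 - 4\tau^{-1}\sqrt{\gamma})\co(A+B) \subset A+B$ with $\gamma = |\co(A) \setminus A| + |\co(B) \setminus B|$; this localizes $\co(A+B) \setminus (A+B)$ to a thickened boundary strip $\Lambda$. Next, I would partition $\partial\co(A+B)$ into \emph{good} arcs (near-straight) and \emph{bad} arcs (which cover only a tiny fraction of the boundary), and use the refined containment on good arcs from \Cref{xirootgammasection} to make $\Lambda$ much thinner above good arcs (height roughly $\xi\sqrt{\gamma}$ for a small parameter $\xi$) than above bad arcs (height on the order of $\sqrt{\gamma}$).

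The core construction, set up in \Cref{paralelegramcoveringsection} and \Cref{preimagesABsection}, is to subdivide $\partial\co(A+B)$ into tiny straight arcs $\arc{q}$ and erect a parallelogram $R_{\arc{q}}$ over each, pointing roughly toward the origin and chosen from $O(1)$ possible directions. Each $R_{\arc{q}}$ decomposes as the Minkowski sum of a parallelogram $R_{\arc{q},A}$ on $\partial\co(A)$ with a vertex $p_{\arc{q},B} \in \partial\co(B) \cap B$, or vice versa, yielding the covering
$$\co(A+B) \setminus (A+B) \subset \bigcup_{\arc{q} \in \mathcal{A}} \bigl((R_{\arc{q},A} \setminus A) + p_{\arc{q},B}\bigr) \cup \bigcup_{\arc{q} \in \mathcal{B}}\bigl(p_{\arc{q},A} + (R_{\arc{q},B} \setminus B)\bigr).$$
Extracting subcollections $\mathcal{A}' \subset \mathcal{A}$ and $\mathcal{B}' \subset \mathcal{B}$ whose parallelograms are disjoint and contained in $\co(A), \co(B)$ respectively reduces the task to controlling the total area of the \emph{removed} parallelograms.

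The principal obstacle is this last step, and it is where the angular-closeness estimate of \Cref{farweightedaveragessection} between bad arcs of $\partial\co(A+B)$ and the corresponding arcs of $\partial\co(A), \partial\co(B)$ enters crucially. With this in hand, I expect to show in \Cref{juttingsection} that any parallelogram failing to be contained in $\co(A)$ or $\co(B)$ must lie on a good arc, and so (with height $\xi\sqrt{\gamma}$) such parallelograms contribute total area $O(\xi^2\gamma)$; and in \Cref{boundingoverlappingsection} that any pair of intersecting parallelograms includes at least one good-arc parallelogram, so disjointness can be enforced by discarding good-arc parallelograms of total area $O(\xi\gamma)$. Choosing $\xi$ small in terms of $\epsilon$ and $\tau$ then yields the error bound $\epsilon\gamma$, completing the proof in \Cref{puttingtogethersection}.
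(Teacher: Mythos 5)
Your equivalence argument between \Cref{mainthm} and \Cref{mainthm'} via the rescaling $A' = tA$, $B' = (1-t)B$ is correct and is exactly the formal equivalence the paper uses (stated both in \Cref{equalareaformulationsubsection} and again at the start of \Cref{puttingtogethersection}), and your outline of the remaining proof matches the paper's structure section by section: the averaging argument, good/bad arc decomposition, the $\xi\sqrt{\gamma}$ refinement on good arcs, the parallelogram covering and its preimages in $\co(A)$, $\co(B)$, the angular-closeness lemma for bad arcs, and the jutting-out and overlap bounds of order $\xi^2\gamma$ and $\xi\gamma$ respectively. This is the same approach as the paper.
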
 In \Cref{mainthm'}, $t$ is a free parameter, which we note is the normalized volume ratio of $tA$ and $(1-t)B$. Given the sets $A,B$ in \Cref{mainthm}, $A/t$ and $B/(1-t)$ have equal volumes, and \Cref{mainthm} is equivalent to \Cref{mainthm'} applied with these equal volume sets.

In the equal area reformulation, we let $K$ be the smallest convex set such that $K$ contains a translate of $A$ and $B$. We assume from now on that $A,B\subset K$. By approximation\footnote{It is easy to show that for any fixed $d_\tau(\epsilon)$ we must have $A,B$ bounded. Now, approximate $A,B$ from the inside by a nested sequence of compact subsets $A_1\subset A_2\subset \ldots$ and $B_1\subset B_2\subset \ldots$. Then for each $A_i,B_i$ approximate the pair from the outside by finite unions of polygons.}, we may assume that $A,B,K$ are unions of polygons.

\subsection{Preliminary affine transformation}
\label{prelimaffinesubsection} Let $T\subset K$ be the maximal area triangle, and let $o$ be the barycenter (which we will always take to be the origin). This maximal area triangle $T$ has the property that $T\subset K \subset -2T:=T'$, and by applying an affine transformation, we may assume that $T$ is a unit equilateral triangle whose vertices are contained in $K$. 
\begin{center}
\begin{tikzpicture}
\coordinate (X') at (2,0);
\coordinate (Y') at (4,4);
\coordinate (Z') at (0,4);

\coordinate (X) at ($(Y')!0.5!(Z')$);
\coordinate (XXY) at (1.5,3.8);
\coordinate (XYY) at (1,3);
\coordinate (Y) at ($(X')!0.5!(Z')$);
\coordinate (YYZ) at (1.8,1);
\coordinate (YZZ) at (2.2,1);
\coordinate (Z) at ($(X')!0.5!(Y')$);
\coordinate (ZZX) at (3,3);
\coordinate (ZXX) at (2.5,3.8);

\coordinate (P) at ($(XXY)!0.5!(XYY)$);
\coordinate (OO) at ($(2,2.6)$);

\draw ($(X)!0.5!(Y)$)--(2,2.6666666)node[midway, above] {$\frac{1}{\sqrt{12}}$} node[anchor=north] {$o$}-- node[midway, above] {$\frac{2}{\sqrt{3}}$} (Y');

\draw  (X')--(Y')--(Z')--cycle;
\draw (X)--(Y)--(Z)--cycle;
\draw [dashed] (X)--(XXY)--(XYY)-- (Y)--(YYZ)--(YZZ)--(Z)--(ZZX)--(ZXX)--cycle;

\draw ($(Y)!0.10!(Z)$) node[anchor=south west] {$T$};
\draw (Y') node[anchor=west] {$T'$};
\draw ($(XYY)!0.5!(XXY)$) node[anchor=east] {$\partial K$};
\end{tikzpicture}
\end{center}
\begin{obs}We make the following observations concerning lengths and areas.
\label{lengthsandareas}
\begin{itemize}
    \item We have $|T|=\frac{\sqrt{3}}{4}$, $|T'|=\sqrt{3}$, $|A|,|B|\in (0, \sqrt{3}]$ and $|K| \in [\frac{\sqrt{3}}{4},\sqrt{3}]$.
    \item For $p \in T'\setminus T$ we have $|op|\in [\frac{1}{\sqrt{12}},\frac{2}{\sqrt{3}}]$, and this in particular holds for $p \in \partial K$.
\end{itemize}
\end{obs}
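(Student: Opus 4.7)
The plan is a direct verification using the normalizations just set up: after the affine reduction, $T$ is a unit equilateral triangle with barycenter $o$ at the origin, and $T' = -2T$ is its image under the linear map $x \mapsto -2x$, i.e.\ the anticomplementary triangle of $T$ through $o$.

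For the first bullet, I would cite the standard formula giving $|T| = \frac{\sqrt{3}}{4}$ for a unit equilateral triangle. Since the map $x \mapsto -2x$ has Jacobian of absolute value $4$, $|T'| = 4|T| = \sqrt{3}$. The inclusion chain $T \subset K \subset T'$ then forces $|K| \in [\frac{\sqrt{3}}{4}, \sqrt{3}]$, and because $A,B$ are nonempty polygonal sets translated into $K \subset T'$, we conclude $|A|, |B| \in (0, \sqrt{3}]$.

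For the second bullet, I would invoke the standard radii of the unit equilateral triangle: $o$ lies at distance $\frac{1}{\sqrt{3}}$ from each vertex and distance $\frac{1}{\sqrt{12}} = \frac{1}{2\sqrt{3}}$ from each edge. Scaling through $o$ by factor $2$ sends the vertices of $T$ to the vertices of $T'$, now at distance $\frac{2}{\sqrt{3}}$ from $o$. Using convexity of $T'$ with $o \in \mathrm{int}(T')$, $|op|$ is maximized over $T'$ at a vertex, giving $|op| \le \frac{2}{\sqrt{3}}$ for all $p \in T'$; using convexity of $T$ with $o \in \mathrm{int}(T)$, the minimum of $|op|$ over $\mathbb{R}^2 \setminus T$ is attained at an edge midpoint of $T$, so $|op| \ge \frac{1}{\sqrt{12}}$ for all $p \notin T$. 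Combining yields the claimed range on $T' \setminus T$. For the ``in particular'' clause, $\partial K \subset K \subset T'$ delivers the upper bound, and $T \subset K$ with $T$ closed forces $\mathrm{int}(T) \subset \mathrm{int}(K)$, so $\partial K \cap \mathrm{int}(T) = \emptyset$ and the lower bound applies as well.

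There is no genuine obstacle here: the observation merely records bookkeeping constants produced by the preliminary affine normalization, to be quoted freely throughout the remainder of the paper. The only minor subtlety is the ``$\partial K$'' clause, which needs one to remember that inclusions of closed convex sets pass to inclusions of interiors.
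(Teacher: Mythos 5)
Your proposal is correct and matches the intended (unstated) reasoning exactly: the paper leaves this observation unproved precisely because it is just the routine bookkeeping of inradius, circumradius, and areas for a unit equilateral triangle and its $-2$-dilate, combined with the nesting $T \subset K \subset T'$ and $A,B \subset K$ from the preliminary affine normalization. One tiny wording slip: you say the minimum of $|op|$ over $\mathbb{R}^2 \setminus T$ is \emph{attained} at an edge midpoint of $T$, but since $T$ is closed that midpoint lies in $T$, so the value $\frac{1}{\sqrt{12}}$ is an infimum over $\mathbb{R}^2\setminus T$ rather than a minimum; this does not affect the conclusion, as the inequality $|op| \ge \frac{1}{\sqrt{12}}$ for $p \notin \operatorname{int}(T)$ is exactly what the two bullets (and the $\partial K$ clause) require.
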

\subsection{Definitions}
\label{defsubsection}
We now collect definitions we will use for the remainder of the paper.
\begin{defn}We define $$\gamma=t^2|\co(A)\setminus A|+(1-t)^2|\co(B)\setminus B|.$$
\end{defn}

\begin{defn}
\label{bisectingdef}
In a convex set $C$ containing $o$, given a point $p \in \partial C$ we say that $p$ is $(\theta,\ell)$-\textbf{bisecting} if the unique isoceles triangle $T_p(\theta,\ell)$ with angle $\theta$ at $p$ and equal sides $\ell$ such that $po$ internally bisects the corresponding angle is contained inside $C$.
\begin{center}
\begin{tikzpicture}
\draw (1,2)--(0,0) node[anchor=east] {$p$}--(2,1) node[anchor=west] {$T_p(\theta,\ell)$}--cycle;
\filldraw (1.75,1.75) circle (2pt);
\draw (1.75,1.75) node[anchor=north] {$o$};
\draw[dotted] (0,0)--(1.75,1.75);
\draw[dashed](0,3)--(-1,2)--(0,0)--(2.5,0.5)--(2.5,2)  node[anchor=west] {$\partial C$}--cycle;
\end{tikzpicture}
\end{center}
\end{defn}

\begin{defn}
Given a convex set $C$, and a point $p \in \partial C$, we say that $p$ is $(\theta,\ell)$-\textbf{good} if there are any points $q,r \in C$ such that $|pq|,|pr| \ge \ell$ and $\angle pqr \ge 180^{\circ}-\theta$. Any point in $\partial C$ which is not $(\theta,\ell)$-good is $(\theta,\ell)$-\textbf{bad}.
\end{defn}

\begin{defn}
\label{pedefn}
Given a point $p$ and a set $E$ with $o\in \co(E)$, we denote $p_E$ the intersection of the ray $op$ with $\partial \co(E)$.
\end{defn}

\subsection{General Observations}
\label{genobssubsection}
\begin{obs}
\label{averagingargument}
Suppose we have subsets $R_A\subset \co(A),R_B \subset \co(B)$, and $z\in \mathbb{R}^2$. Let $H=H_{-\frac{1-t}{t},z}$ denote the negative homothety of ratio $-\frac{1-t}{t}$ through $z$. Then if $|R_A\cap H(R_B)| > t^{-2}\gamma$, or equivalently $|H^{-1}(R_A)\cap R_B| > (1-t)^{-2}\gamma$, then we have $z\in D_t$.\footnote{Note that $t^{-2}\gamma= |\co(A)\setminus A|+|\co(H(B))\setminus H(B)|$, so there is at least one $x \in R_A\cap H(R_B)\subset \co(A)\cap H(\co(B))$ which is not in $(\co(A)\setminus A)\cup (\co(H(B))\setminus H(B))$. Thus $x\in A \cap H(B)$, and $z=tx+(1-t)H^{-1}(x)\in D_t$.}
\end{obs}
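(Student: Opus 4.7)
The plan is to establish the observation by a direct measure-pigeonhole argument. The first step is to rewrite membership in $D_t$ in terms of the homothety $H$: by direct computation, for every point $x \in \mathbb{R}^2$ one has $tx + (1-t)H^{-1}(x) = z$, where $H^{-1}$ is the negative homothety of ratio $-t/(1-t)$ centered at $z$. Consequently, producing any single point $x \in A \cap H(B)$ automatically yields $z = tx + (1-t)H^{-1}(x) \in tA + (1-t)B = D_t$, since $x \in A$ and $H^{-1}(x) \in B$.

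The second step is to locate such a point inside $R_A \cap H(R_B)$. Using $R_A \subset \co(A)$ and $R_B \subset \co(B)$, I would record the inclusion
\[
(R_A \cap H(R_B)) \setminus (A \cap H(B)) \subset (\co(A) \setminus A) \cup H(\co(B) \setminus B).
\]
Since $H$ scales area by $((1-t)/t)^2$, the right-hand side has measure at most
\[
|\co(A)\setminus A| + \left(\tfrac{1-t}{t}\right)^2 |\co(B) \setminus B| = t^{-2}\gamma,
\]
using the definition $\gamma = t^2|\co(A)\setminus A| + (1-t)^2|\co(B)\setminus B|$. Under the hypothesis $|R_A \cap H(R_B)| > t^{-2}\gamma$, some point of $R_A \cap H(R_B)$ must therefore lie in $A \cap H(B)$, completing the proof. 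The equivalent inequality $|H^{-1}(R_A) \cap R_B| > (1-t)^{-2}\gamma$ then follows by applying $H^{-1}$ to both sides: the area-scaling factor $(t/(1-t))^2$ exactly converts $t^{-2}\gamma$ into $(1-t)^{-2}\gamma$.

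There is no substantive obstacle here: once the identity $tx + (1-t)H^{-1}(x) = z$ is checked and the set-theoretic inclusion above is written down, the conclusion is a one-line pigeonhole estimate, as the footnote already indicates. The only point requiring care is tracking the $((1-t)/t)^2$ area distortion of $H$ and confirming that it is absorbed precisely by the $t^{-2}$ factor, so that the "missing" pieces from $A$ and from $H(B)$ together occupy measure exactly $t^{-2}\gamma$.
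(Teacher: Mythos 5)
Your proposal is correct and is essentially the same pigeonhole argument the paper gives in its footnote: both pass to the inclusion $(R_A\cap H(R_B))\setminus (A\cap H(B)) \subset (\co(A)\setminus A)\cup H(\co(B)\setminus B)$, identify the measure of the right-hand side as $t^{-2}\gamma$ via the area-scaling of $H$, and use the identity $tx+(1-t)H^{-1}(x)=z$ to conclude $z\in D_t$. You merely spell out the two computations (the homothety identity and the $((1-t)/t)^2$ scaling) that the paper leaves implicit.
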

\begin{obs}
\label{gammato0}
For sets $A,B$ with common volume $V$, Figalli and Jerison showed (see \Cref{FigJerBm}) that for fixed $\tau$ we have $|K \setminus A|V^{-1}$, $|K \setminus B|V^{-1} \to 0$ as $|D_t|V^{-1}\to 1$. In particular, as $V\in (0,\sqrt{3}]$ by \Cref{lengthsandareas}, we have
\begin{center}$|K\setminus A|$, $|K\setminus B|$, $|\co(A)\setminus A|,|\co(B)\setminus B|, \gamma \to 0$ as $d_{\tau}\to 0$.
\end{center}
\end{obs}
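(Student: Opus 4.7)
The plan is to deduce the observation from the Figalli--Jerison stability theorem (\Cref{FigJerBm}) applied to the rescaled pair $(tA,(1-t)B)$. For this pair one has $|tA|^{1/2}+|(1-t)B|^{1/2}=V^{1/2}$, so the hypothesis $|D_t|\le(1+d_\tau)^2V$ translates into $\delta(tA,(1-t)B)=|D_t|^{1/2}/V^{1/2}-1\le d_\tau$, while the normalized volume ratio of this pair is exactly $t\in[\tau,1-\tau]$. Hence \Cref{FigJerBm} supplies homothetic convex sets $K_A'\supset tA$ and $K_B'\supset(1-t)B$ whose relative excesses $|K_A'\setminus tA|/|tA|$ and $|K_B'\setminus(1-t)B|/|(1-t)B|$ tend to $0$ as $d_\tau\to 0$.

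Rescaling, set $\widetilde K_A:=t^{-1}K_A'\supset A$ and $\widetilde K_B:=(1-t)^{-1}K_B'\supset B$; these are convex with $|\widetilde K_A|,|\widetilde K_B|\to V$, so $|\widetilde K_A\setminus A|,|\widetilde K_B\setminus B|\to 0$. Since $\widetilde K_A$ is a convex superset of $A$ it contains $\co(A)$, and therefore $|\co(A)\setminus A|\le|\widetilde K_A\setminus A|\to 0$; similarly $|\co(B)\setminus B|\to 0$. Substituting into $\gamma=t^2|\co(A)\setminus A|+(1-t)^2|\co(B)\setminus B|$ yields $\gamma\to 0$.

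For the $|K\setminus A|$ and $|K\setminus B|$ statements, note that the homothety relation $K_B'=sK_A'+w$ rescales to $\widetilde K_B=r\widetilde K_A+w'$ with $r^2=|\widetilde K_B|/|\widetilde K_A|\to 1$; in particular, after appropriate translation, $A$ and $B$ simultaneously lie inside a convex set of volume $\max(1,r)^2|\widetilde K_A|\to V$. Since $K$ is by definition the minimal-volume convex set containing translates of $A$ and $B$, we conclude $|K|\le V+o(1)$, and combined with $|K|\ge|A|=V$ this forces $|K|\to V$. Therefore $|K\setminus A|=|K\setminus B|=|K|-V\to 0$.

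The only substantive input is the cited Figalli--Jerison theorem; the rest is bookkeeping of rescalings, convexity inclusions, and the identity $|K\setminus A|=|K|-V$ that holds whenever $A\subset K$ and $|A|=V$.
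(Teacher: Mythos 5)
Your proof is correct and is, if anything, more complete than the paper's own justification, which simply cites Figalli--Jerison for $|K\setminus A|V^{-1},|K\setminus B|V^{-1}\to0$ and then de-normalizes via $V\le\sqrt3$. The genuine addition in your write-up is the bridge from the homothetic convex supersets $K_A',K_B'$ that \Cref{FigJerBm} produces to the specific set $K$ (the minimal convex set containing translates of $A$ and $B$): as stated, \Cref{FigJerBm} says nothing about $K$, and your argument --- that $\widetilde K_A,\widetilde K_B$ become homothetic with ratio tending to $1$, so a translate of the smaller fits inside the larger, forcing $|K|\to V$ --- correctly fills this gap. Two small omissions worth flagging: you implicitly use boundedness of $V$ when passing from $|\widetilde K_A|/V\to1$ to $|\widetilde K_A\setminus A|\to0$ (this is exactly the paper's ``$V\in(0,\sqrt3]$'' clause, which relies on the affine normalization of \Cref{prelimaffinesubsection}); and the containment of a translate of $\widetilde K_A$ inside $r\widetilde K_A$ for $r\ge 1$ deserves a line, e.g.\ $\widetilde K_A+(r-1)c_0\subset\widetilde K_A+(r-1)\widetilde K_A=r\widetilde K_A$ for any $c_0\in\widetilde K_A$.
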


\subsection{Constants and their dependencies}
Fix $\tau$ and $\epsilon$. For the convenience of the reader, we describe roughly our choice of parameters throughout. First, we will take $M=1000 \in 2\mathbb{N}$ to be a universal constant and $\alpha=\frac{720^{\circ}}{M}$. Next, we will take $\xi$ such that $\epsilon \ge (\tau^2+(1-\tau)^2)(25\tau^{-1}M\xi^2+16000\tau^{-1}M\xi)$. Next, we take $\theta \le \frac{1}{2}^{\circ}$ such that $\frac{1}{2}\xi^2 \sin(28^{\circ})^6/\sin(4\theta) \ge 1$, and we take $\ell$ such that $\left(\frac{1440^{\circ}}{\theta}+3\right)4(1+100t^{-1})\ell\frac{100}{99}\sqrt{12}<\frac{1}{3}\alpha$. Finally, take $d_\tau$ sufficiently small to make various statements true along the way.

\section{Initial structural results} \label{GuaranteedRegionsSection}

In this section, we will show three preliminary propositions which quantify how close we may assume $A,B$ are to $K$, and how much of $\co(D_t)$ we can guarantee is covered by $D_t$ without resorting to a finer analysis of the boundaries of the various regions.
\begin{itemize}
    \item In \Cref{etascaleinside} we show that for any constant $\eta \in (0,1)$, if $d_{\tau}$ is sufficiently small in terms of $\eta$ then we have
$$(1-\eta)K\subset \co(A),\co(B),\co(D_t) \subset K.$$

\item In \Cref{perturb} we show that if $d_{\tau}$ is sufficiently small, then for every $z \in \partial K$ we have that $z,z_A,z_B,z_{D_t}$ are $(59^{\circ},\frac{1}{3})$-bisecting.
\item Finally, in \Cref{4rootg} we show that if $d_{\tau}$ is sufficiently small, then
$$(1-4t^{-1}\sqrt{\gamma})\co\left(D_t\right) \subset D_t.$$
\end{itemize}

\subsection{Showing  $\co(A),\co(B),\co(D_t)$ contain a large scaled copy of $K$}
\begin{prop}
\label{etascaleinside}
For any fixed $\eta \in (0,1)$, if $d_{\tau}$ is sufficiently small in terms of $\eta$, then $(1-\eta)K \subset \co(A),\co(B),\co(D_t)\subset K$.
\end{prop}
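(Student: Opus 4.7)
The plan is to establish the two inclusions separately. The upper inclusion $\co(A), \co(B), \co(D_t) \subset K$ is immediate from convexity of $K$: since $A, B \subset K$ we have $\co(A), \co(B) \subset K$, and $D_t = tA + (1-t)B \subset tK + (1-t)K = K$ gives $\co(D_t) \subset K$.

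For the lower inclusion, I would reduce to the following claim: there exists $c(\eta) > 0$ such that any measurable $E \subset K$ with $|K \setminus E| < c(\eta)$ satisfies $\co(E) \supset (1-\eta)K$. Granting the claim, apply it to $E \in \{A, B, D_t\}$. By \Cref{gammato0} we have $|K \setminus A|, |K \setminus B| \to 0$ as $d_\tau \to 0$; and $D_t \subset K$ together with the Brunn-Minkowski bound $|D_t| \ge V = |A|$ yields $|K \setminus D_t| = |K| - |D_t| \le |K| - |A| = |K \setminus A| \to 0$. Hence all three complements are below $c(\eta)$ once $d_\tau$ is small enough.

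To prove the claim, note that $\co(E) \supset (1-\eta)K$ is equivalent to $h_{\co(E)}(u) \ge (1-\eta) h_K(u)$ for every unit vector $u$, and since $h_{\co(E)} = h_E$, this holds iff the cap $\Sigma_u := K \cap \{x : \langle x, u\rangle \ge (1-\eta) h_K(u)\}$ meets $E$. A uniform lower bound $|\Sigma_u| \ge c(\eta)$ then suffices, because $|K \setminus E| < c(\eta) \le |\Sigma_u|$ and $\Sigma_u \subset K$ force $E \cap \Sigma_u \ne \emptyset$ for every $u$.

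The main technical step, which I expect to be the heart of the proof, is the cap lower bound. Write
$$|\Sigma_u| = \int_{(1-\eta) h_K(u)}^{h_K(u)} L(s) \, ds,$$
where $L(s)$ denotes the one-dimensional length of the section $K \cap \{\langle y, u\rangle = s\}$. By Brunn-Minkowski in one variable, $L$ is concave on its support, and combined with $L(h_K(u)) \ge 0$ concavity gives $L(s) \ge L(0)(h_K(u) - s)/h_K(u)$ on $[0, h_K(u)]$. Integrating yields $|\Sigma_u| \ge L(0) \eta^2 h_K(u)/2$. Since $K \supset T$ and the unit equilateral triangle $T$ contains the disk of radius $1/\sqrt{12}$ around its barycenter $o$, the chord $K \cap \{\langle y, u\rangle = 0\}$ through the origin has length $L(0) \ge 2/\sqrt{12}$; combined with $h_K(u) \ge 1/\sqrt{12}$ from \Cref{lengthsandareas}, this gives $|\Sigma_u| \ge \eta^2/12$, so we may take $c(\eta) = \eta^2/12$.
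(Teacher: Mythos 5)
Your proof is correct, and it takes a genuinely different route from the paper. The paper first establishes Lemma~\ref{bisectinglem} (every $p\in\partial K$ is $(60^{\circ},\tfrac12)$-bisecting), then argues radially: if $p'=(1-\eta)p\notin\co(A)$ for some $p\in\partial K$, a separating line through $p'$ cuts off from $T_p(60^{\circ},\tfrac12)$ a region of area at least $\tfrac{\sqrt{3}}{36}\eta^2$ lying in $K\setminus A$, a contradiction. It then obtains the $\co(D_t)$ containment from $\co(D_t)=t\co(A)+(1-t)\co(B)$. You instead reduce containment of $(1-\eta)K$ to a support-function inequality $h_E\ge(1-\eta)h_K$, and bound each cap $\Sigma_u$ from below by a Fubini argument together with concavity of the section-length function $L(s)$ (Brunn's theorem in dimension two), using only $B(o,1/\sqrt{12})\subset T\subset K$. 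This is cleaner and more self-contained for this particular proposition, since it bypasses \Cref{bisectinglem} entirely; on the other hand the paper still needs \Cref{bisectinglem} for \Cref{bisectingcor} and \Cref{assumplem}, so there is no net savings globally. Your treatment of $\co(D_t)$ via $|K\setminus D_t|\le|K\setminus A|$ (using $D_t\subset K$ and $|D_t|\ge V$ by Brunn--Minkowski) is also valid, though the paper's one-line deduction from $\co(D_t)=t\co(A)+(1-t)\co(B)$ is shorter. The constants differ ($\eta^2/12$ versus $\sqrt{3}\eta^2/36$), but both are of the correct order $\eta^2$ and either suffices.
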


To prove \Cref{etascaleinside}, we need \Cref{bisectinglem} which guarantees that $\partial K$ behaves well under the notion of $(\theta,\ell)$-bisecting from \Cref{bisectingdef}.

\begin{lem}\label{bisectinglem}
Every point $p\in \partial K$ is $(60^{\circ},\frac{1}{2})$-bisecting.
\end{lem}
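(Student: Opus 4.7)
The plan is to prove the stronger inclusion $T_p(60^{\circ}, \frac{1}{2}) \subset \co(T \cup \{p\})$; this implies $T_p(60^{\circ}, \frac{1}{2}) \subset K$ since $T \subset K$, $p \in K$, and $K$ is convex. Writing $T_p(60^{\circ}, \frac{1}{2}) = \co(\{p, q_1, q_2\})$ where $q_1, q_2$ are the two vertices of the isoceles triangle opposite $p$, by convexity it suffices to show $q_1, q_2 \in \co(T \cup \{p\})$.

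Working in local coordinates centered at $p$ with the $+y$-axis pointing toward $o$, I would compute $q_1, q_2 = (\pm \frac{1}{4}, \frac{\sqrt{3}}{4})$, and hence $|q_i - o|^2 = (d - \frac{\sqrt{3}}{4})^2 + \frac{1}{16}$, where $d := |op| \in [\frac{1}{\sqrt{12}}, \frac{2}{\sqrt{3}}]$ by \Cref{lengthsandareas}. I would then split into two regimes based on the value of $d$.

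First, if $d \in [\frac{1}{\sqrt{12}}, \frac{1}{\sqrt{3}}]$, the quantity $(d - \frac{\sqrt{3}}{4})^2$ is maximized at the endpoints of the interval, where it equals $\frac{1}{48}$. Hence $|q_i - o|^2 \le \frac{1}{12}$, so $q_1, q_2$ lie in the closed disk of radius $\frac{1}{\sqrt{12}}$ around $o$, which is the inscribed disk of $T$ and is contained in $T \subset K$.

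Second, if $d \in (\frac{1}{\sqrt{3}}, \frac{2}{\sqrt{3}}]$, then $p$ lies outside the circumscribed disk of $T$, placing $p$ in one of three congruent ``corner triangles'' that make up $T' \setminus T$. By the threefold rotational symmetry of $T$, without loss of generality $p$ lies in the corner triangle $T_*$ with vertices $v'_1 := -2v_1$, $v_2$, $v_3$ (where $v_1, v_2, v_3$ are the vertices of $T$). Then $\co(T \cup \{p\})$ is the quadrilateral with vertices $v_1, v_3, p, v_2$, and one verifies directly that $q_1, q_2$ lie in this quadrilateral: in the extremal case $p = v'_1$, the points $q_1, q_2$ are the midpoints of $v'_1 v_3$ and $v'_1 v_2$ (each of length $1$), and for general $p \in T_*$ the inclusion follows by expressing $q_1, q_2$ as explicit convex combinations of $\{p, v_2, v_3\}$. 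The main obstacle is this coordinate verification, but the reflection symmetry of $T_*$ through the line $v'_1 o$, combined with the tightness at $p = v'_1$, reduces the check to a one-parameter family.
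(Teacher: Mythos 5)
Your approach is genuinely different from the paper's. The paper argues synthetically: it rotates $p$ by $120^{\circ}$ and $240^{\circ}$ about $o$ to obtain an equilateral triangle $p_xp_yp_z$ centered at $o$, observes that $o,p,p',x$ are concyclic for the intersection point $p'$ of $xz$ and $p_zp_y$ (since $\angle oxp'=\angle opp'=30^{\circ}$), and then applies the law of sines to deduce $|pp'|\ge \frac{1}{2}$. You instead try to bound $q_1,q_2$ directly in coordinates via a split on $d=|op|$.

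Your Case~1 is correct and rather clean: the interval $[\frac{1}{\sqrt{12}},\frac{1}{\sqrt{3}}]$ is symmetric about $\frac{\sqrt{3}}{4}$, so the parabola peaks at $\frac{1}{48}$ at both endpoints and $|q_io|\le \frac{1}{\sqrt{12}}$ puts $q_i$ in the inscribed disk of $T$.

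Case~2 is where there is a genuine gap. First, the assertion that $q_1,q_2$ can be written as convex combinations of $\{p,v_2,v_3\}$ is false near the boundary $d=\frac{1}{\sqrt{3}}$: for $p$ in that regime one or both $q_i$ land in $T$ strictly above the side $v_2v_3$, hence outside the corner triangle $\co(\{p,v_2,v_3\})$. (For example, $p=0.9v_2+0.1v_1'$ gives $q_2\approx(-0.278,\,0.094)$, which lies well inside $T$ but above $v_2v_3$.) So the statement that actually needs verifying is $q_i\in T\cup \co(\{p,v_2,v_3\})$, and the verification must treat both possibilities. Second, and more seriously, the claim that the check ``reduces to a one-parameter family'' is unjustified: after quotienting by the reflection through $v_1'o$, the point $p$ still varies over a two-dimensional region (half of $T_*$), and both the location of $q_i(p)$ and the quadrilateral $\co(T\cup\{p\})$ vary with $p$. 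Checking the single extremal point $p=v_1'$ and appealing to ``tightness'' does not control this two-parameter family---you would need either a monotonicity argument (e.g.\ showing the relevant defining inequality for the quadrilateral is monotone as $p$ moves outward along rays from $\partial T$) or an honest two-variable inequality. Numerical tests suggest the inclusion $q_1,q_2\in\co(T\cup\{p\})$ is in fact true, so the approach is very likely salvageable, but as written Case~2 asserts the key inclusion rather than proving it.
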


\begin{proof}
Note that the statement is trivially true if $p$ is a vertex of $\partial T$ (since then $T_p(60^{\circ},1)=T\subset K$), so assume otherwise. 
Let $x,y,z$ be the vertices of $T$ and $x'=-2x$, $y'=-2y$, $z'=-2z$ the corresponding vertices of $T'$. Let $p=p_z$ be in the triangle $xyz'$. Let $p_y\in xy'z$ and $p_x\in x'yz$ be the point $p_z$ rotated by $120^\circ$ and $240^\circ$ clockwise around $o$ respectively. Note that $p_xp_yp_z$ is an equilateral triangle with centre $o$, such that $\angle op_zp_y=30^\circ$. Let $p'$ be the intersection between segments $xz$ and $p_zp_y$.

\begin{center}
\begin{tikzpicture}[scale=1.25]
\coordinate (X') at (2,0);
\coordinate (Y') at (4,4);
\coordinate (Z') at (0,4);

\coordinate (X) at ($(Y')!0.5!(Z')$);
\coordinate (XXY) at (1.5,3.8);
\coordinate (XYY) at (1,3);
\coordinate (Y) at ($(X')!0.5!(Z')$);
\coordinate (YYZ) at (1.8,1);
\coordinate (YZZ) at (2.2,1);
\coordinate (Z) at ($(X')!0.5!(Y')$);
\coordinate (ZZX) at (3,3);
\coordinate (ZXX) at (2.5,3.8);

\coordinate (P) at ($(XXY)!0.5!(XYY)$);
\coordinate (OO) at ($(2,2.6)$);

\draw  (X') node[anchor=east] {$x'$}--(Y') node[anchor=west] {$y'$}--(Z') node[anchor=east] {$z'$}--cycle;
\draw (X) node[anchor=south] {$x$}--(Y) node[anchor=east] {$y$}--(Z) node[anchor=west] {$z$}--cycle;
\draw [dashed] (X)--(XXY)--(XYY)-- (Y)--(YYZ)--(YZZ)--(Z)--(ZZX)--(ZXX)--cycle;

\coordinate (p') at ($(X)!0.5!(Z)$);
\coordinate (py) at ($(P)!1.25!(p')$);
\coordinate (pz) at ($(1.8,1.6)$);

\draw (OO) node[anchor=north] {$o$};
\filldraw (OO) circle (2pt);
\draw (OO)--(P) node[anchor=south east] {$p$}--(p') node[anchor=south] {$p'$}--(py) node[anchor=north] {$p_y$};
\draw (P)--(pz) node[anchor=west] {$p_x$};
\draw (py)--(pz);
\end{tikzpicture}
\end{center}


Note that $pp'\subset K$. We will show that $|pp'| \ge \frac{1}{2}$. Note that the points $o,p,p',x$ are concyclic as $\angle oxp'=30^{\circ}=\angle opp'$. We have $\angle pxp' \in [60^{\circ},120^{\circ}]$, so by the law of sines, $2r=\frac{|pp'|}{sin \angle pxp'} \le \frac{2}{\sqrt{3}}|pp'|$, where $r$ is the circumradius of this circle. But $2r \ge |ox|=\frac{1}{\sqrt{3}}$, so $|pp'| \ge \frac{1}{2}$. By showing a similar result for $p_zp_x$, we conclude that $T_{p}(60^{\circ},\frac{1}{2})$ lies in $K$.
\end{proof}

\begin{proof}[Proof of \Cref{etascaleinside}]
We prove this for $\co(A)$, the identical proof works for $\co(B)$ and then because $\co(D_t)=t\co(A)+(1-t)\co(B)$ we deduce the final containments. By \Cref{gammato0}, we can take $d_{\tau}$ sufficiently small in terms of $\eta$ so that $|K\setminus A|<\frac{\sqrt{3}}{36}\eta^2$. Let $p \in \partial K$, let $p'\in op$ be such that $|pp'|=\eta|op|$, and suppose for the sake of contradiction that $p' \not \in \co(A)$. Then as $|op|\in [\frac{1}{\sqrt{12}},\frac{2}{\sqrt{3}}]$ by \Cref{lengthsandareas}, we have $|pp'| \in [\frac{\eta}{\sqrt{12}},\frac{2\eta}{\sqrt{3}}]=[(\frac{2}{3}\eta)h, (\frac{8}{3}\eta) h]$ where $h=\frac{\sqrt{3}}{4}$ is the height of $T_p(60^{\circ},\frac{1}{2})$. A line separating $p$ from $\co(A)$ through $p'$ cuts off from  $T_p(60^{\circ},\frac{1}{2})$ an area of at least  $\min(\frac{1}{2},(\frac{2}{3}\eta)^2)|T_p(60^{\circ},\frac{1}{2})|=\frac{\sqrt{3}}{36}\eta^2$ on the $p$-side, which lies in $K\setminus A$, contradicting $|K\setminus A|<\frac{\sqrt{3}}{36}\eta^2$.
\end{proof}

\subsection{Showing points in $\partial K$, $\partial \co(A)$, $\partial \co(B)$, $\partial \co(D_t)$ are $(59^{\circ},\frac{1}{3})$-bisecting}
\begin{prop}
\label{perturb}\label{bisectingcor}
For $d_{\tau}$ sufficiently small, we have for every $z \in \partial K$ that $z,z_A,z_B,z_{D_t}$ are $(59^{\circ},\frac{1}{3})$-bisecting.
\end{prop}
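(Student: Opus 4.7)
The plan is to handle $z$ itself directly from Lemma \ref{bisectinglem} and to handle $z_A, z_B, z_{D_t}$ by a perturbation argument using Proposition \ref{etascaleinside}. For $z$, the conclusion is immediate with room to spare: $T_z(59^\circ, \tfrac{1}{3}) \subset T_z(60^\circ, \tfrac{1}{2}) \subset K$ by Lemma \ref{bisectinglem}. So the only real content is in transferring a bisecting triangle at $z$ to bisecting triangles at the slightly perturbed points $z_A, z_B, z_{D_t}$.

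For $z_A$, I would fix a small $\eta > 0$ (to be determined at the end) and take $d_\tau$ small enough that Proposition \ref{etascaleinside} yields $(1-\eta)K \subset \co(A) \subset K$. Let $x, y$ denote the two non-apex vertices of the triangle $T_z(60^\circ, \tfrac{1}{2}) \subset K$ supplied by Lemma \ref{bisectinglem}; by construction $x, y$ are symmetric about the line $oz$. Consider the triangle $\Delta_A := \co\{z_A, (1-\eta)x, (1-\eta)y\}$. Since $(1-\eta)x, (1-\eta)y \in (1-\eta)K \subset \co(A)$ and $z_A \in \co(A)$, convexity of $\co(A)$ gives $\Delta_A \subset \co(A)$. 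Because $z_A$ lies on the ray $oz$ while $(1-\eta)x$ and $(1-\eta)y$ are symmetric about that ray, $\Delta_A$ is isoceles with apex $z_A$ and $oz_A$ as the internal bisector of the apex angle, i.e.\ $\Delta_A$ has exactly the structural form of a witness for the bisecting property at $z_A$.

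It then suffices to show that, for $\eta$ small enough, the apex angle of $\Delta_A$ is at least $59^\circ$ and its two equal sides have length at least $\tfrac{1}{3}$; by the elementary containment criterion for two coaxial apex-sharing isoceles triangles (the inner one with half-angle $\alpha_i$ and leg $L_i$ fits inside the outer one with half-angle $\alpha_o \geq \alpha_i$ and leg $L_o$ precisely when $L_i\cos\alpha_i \leq L_o\cos\alpha_o$), this implies $T_{z_A}(59^\circ, \tfrac{1}{3}) \subset \Delta_A \subset \co(A)$. Since $(1-\eta)K \subset \co(A) \subset K$, we have $z_A \in [(1-\eta)z, z]$ along the ray $oz$, and as $\eta \to 0$ the vertices of $\Delta_A$ converge to those of $T_z(60^\circ, \tfrac{1}{2})$ (apex angle $60^\circ$, legs $\tfrac{1}{2}$). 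Uniform continuity over $z \in \partial K$ — available because $|oz|\in[\tfrac{1}{\sqrt{12}}, \tfrac{2}{\sqrt{3}}]$ by Observation \ref{lengthsandareas} — then lets us fix one $\eta$ that works for all $z$ simultaneously.

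The same argument, substituting $\co(B)$ or $\co(D_t)$ for $\co(A)$, handles $z_B$ and $z_{D_t}$ (using the corresponding inclusions $(1-\eta)K \subset \co(B)$ and $(1-\eta)K \subset \co(D_t)$ from Proposition \ref{etascaleinside}). The only nontrivial step is the trigonometric containment check, but the margins ($60^\circ$ vs $59^\circ$, $\tfrac{1}{2}$ vs $\tfrac{1}{3}$) are so generous that no delicate estimate is needed: one simply checks, by a one-variable perturbative calculation, that the half-angle of $\Delta_A$ and the length $L\cos\alpha$ of its leg's projection both depend continuously on $\eta$ and attain their $\eta=0$ values $30^\circ$ and $\tfrac{\sqrt{3}}{4}$ respectively, which are strictly greater than the required $29.5^\circ$ and $\tfrac{1}{3}\cos 29.5^\circ$.
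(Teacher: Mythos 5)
Your proof is correct and takes essentially the same route as the paper: invoke \Cref{etascaleinside} to get $(1-\eta)K\subset \co(A),\co(B),\co(D_t)\subset K$, form the isoceles triangle with apex $z_C$ and base $(1-\eta)x,(1-\eta)y$ inside the relevant convex body, and check via the coaxial apex-sharing isoceles criterion that it contains $T_{z_C}(59^\circ,\tfrac{1}{3})$. The paper simply carries out the perturbation estimate with an explicit $\eta=10^{-9}$ and two trigonometric inequalities on the right triangles $x'm'z_C$, $y'm'z_C$, whereas you argue by continuity and compactness of $\partial K$; this is a presentational difference, not a different argument.
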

\begin{proof}
By \Cref{etascaleinside} we can take $d_{\tau}$ sufficiently small so that $(1-\eta)K\subset \co(A),\co(B)\subset K$ with $\eta=10^{-9}$. Let $C$ be one of $K,\co(A),\co(B),\co(D_t)$. We have $T_z(60^{\circ},\frac{1}{2})\subset K$. Let $x,y$ denote the other two vertices of the triangle, and let $x'=(1-\eta)x$, $y'=(1-\eta)y$. Note that $x',y' \in (1-\eta)K\subset C$.
\begin{center}
\begin{tikzpicture}[scale=1.2]
\draw (-1,0) node[anchor=east] {$x$}--(0,2) node[anchor=south] {$z$}--(1,0) node[anchor=west] {$y$}--cycle;
\draw[dotted] (-1,0) node[anchor=north] {$x'$}--(0,-0.5)--(1,0) node[anchor=north] {$y'$};
\draw[dotted] ($(-1,0)!0.2!(0,-0.5)$)-- node[midway, label={[xshift=0cm, yshift=-0.45cm]$m'$}]{} ($(1,0)!0.2!(0,-0.5)$);
\draw (0,0) node[anchor=south]{$m$};

\draw ($(-1,0)!0.2!(0,-0.5)$)--(0,1.8)--($(1,0)!0.2!(0,-0.5)$);
\draw[dotted] (0,-0.5) node[anchor=north] {$o$}--(0,1.8); 
\end{tikzpicture}
\end{center}
Let $m$ be the midpoint of $xy$ and $m'$ be the midpoint of $x'y'$. Then $|x'm'|=\frac{1}{4}(1-\eta)$,  $|m'z_C|\le |mz_C|+|mm'|\le |mz|+\eta |om| \le \frac{\sqrt{3}}{4}+\eta \frac{2}{\sqrt{3}}$ by \Cref{lengthsandareas}, and similarly $|m'z_C| \ge |mz|-|zz_C|-|m'm|\ge |mz|-\eta (|oz|+|om|)\ge \frac{\sqrt{3}}{4}-2\eta\frac{2}{\sqrt{3}}$ (these are true even if $o$ is inside the triangle $xyz$). Thus, by inspecting the right triangles $x'm'z_C$ and $y'm'z_C$, because $\tan(29.5^{\circ})(\frac{\sqrt{3}}{4}+\eta\frac{2}{\sqrt{3}}) <\frac{1}{4}(1-\eta)$ and $\frac{1}{\cos(29.5^{\circ})}(\frac{\sqrt{3}}{4}-2\eta\frac{2}{\sqrt{3}}) > \frac{1}{3}$, the vertices of $T_{z_C}(59^{\circ},\frac{1}{3})$ lie in the triangle $x'y'z_C\subset C$. 
\end{proof}

\begin{cor}\label{tangentangle}
Let $C$ be $K,\co(A),\co(B)$ or $\co(D_t)$. For $d_{\tau}$ sufficiently small, given $z\in\partial C$ and a supporting line $l$ to $C$ at $z$, we have $\angle l, zo\in (29^\circ,180^{\circ}-29^{\circ})$.
\end{cor}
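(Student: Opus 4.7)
The plan is to reduce this corollary directly to the $(59^\circ,\tfrac13)$-bisecting property from \Cref{perturb}. First I would note that every point $w\in\partial C$ (for $C$ one of $K,\co(A),\co(B),\co(D_t)$) arises as $w=z_C$ for some $z\in\partial K$: the inclusions $\co(A),\co(B)\subset K$ and $\co(D_t)=t\co(A)+(1-t)\co(B)\subset K$ (together with $o\in\co(A),\co(B),\co(D_t)$) mean the ray from $o$ through $w$ meets $\partial K$ at such a $z$. Then \Cref{perturb}, applied with $d_\tau$ small enough, tells us that $w$ is $(59^\circ,\tfrac13)$-bisecting inside $C$, so the isoceles triangle $T_w(59^\circ,\tfrac13)$ with apex angle $59^\circ$ at $w$, equal sides of length $\tfrac13$, and $wo$ bisecting the apex angle, lies entirely in $C$.

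Next I would use the supporting line $l$ at $w$ to constrain the direction of $wo$. Since $l$ supports $C$ at $w$, all of $C$ (and in particular the triangle $T_w(59^\circ,\tfrac13)$) lies in one closed half-plane bounded by $l$; call this half-plane $H$. The two equal sides of the triangle emanate from $w$ in directions making angles of exactly $29.5^\circ$ with the ray $wo$ on either side of it. These two segments must lie in $H$, while $o$ itself lies strictly in $H$ (as $o$ is in the interior of $C$ by \Cref{etascaleinside} with $\eta<1$).

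The core geometric observation is then: if the angle between $l$ and the ray $wo$, measured on one side, were at most $29.5^\circ$, then the triangle side lying on that side of $wo$ would cross $l$ and leave $H$, contradicting $T_w(59^\circ,\tfrac13)\subset H$. Hence both angles between $l$ and $wo$ are at least $29.5^\circ$, which gives $\angle(l,wo)\in[29.5^\circ,150.5^\circ]\subset(29^\circ,180^\circ-29^\circ)$, as required. This last step is essentially a half-plane containment check and is the only real geometric content; it is quite short, so I do not anticipate any serious obstacle, though one should be slightly careful to distinguish the two half-plane sides of the ray $wo$ when ruling out small angles.
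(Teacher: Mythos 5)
The paper gives no explicit proof of this corollary (it follows Proposition~\ref{perturb} directly), and your argument is precisely the implicit derivation the paper intends: every point of $\partial C$ is $(59^\circ,\tfrac13)$-bisecting in $C$ by Proposition~\ref{perturb}, the two arms of $T_w(59^\circ,\tfrac13)$ make angle $29.5^\circ$ with $wo$, and they must stay in the supporting half-plane, forcing $\angle(l,wo)\ge 29.5^\circ>29^\circ$. Your reduction step (realizing any $w\in\partial C$ as $z_C$ for $z\in\partial K$ via the ray from $o$, using $\co(A),\co(B),\co(D_t)\subset K$ and $o$ interior to $C$) is exactly what is needed, so the proposal is correct and matches the paper's approach.
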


\subsection{Showing $D_t$ contains a large scaled copy of $\co(D_t)$}

\begin{prop}
\label{4rootg}
For $d_{\tau}$ sufficiently small, we have $$(1-4t^{-1}\sqrt{\gamma})\co(D_t) \subset D_t.$$
In particular, if $z\in \partial \co(D_t)$ and $p\in oz$ has $|pz|\ge 5t^{-1}\sqrt{\gamma}$, then $p \in D_t$.
\end{prop}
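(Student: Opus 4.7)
The plan is to apply \Cref{averagingargument} with $R_A = \co(A)$ and $R_B = \co(B)$, reducing the containment $p \in D_t$ to the single inequality $|\co(A) \cap H_p(\co(B))| > t^{-2}\gamma$. Setting $A' = t\co(A)$, $B' = (1-t)\co(B)$ (so $\co(D_t) = A' + B'$) and introducing the covariogram $g(p) := |A' \cap (p - B')|$, the change of variables $y = tx$ gives $|\co(A) \cap H_p(\co(B))| = t^{-2}g(p)$, so it suffices to prove $g(p) > \gamma$.

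The key step is the concavity of $g^{1/2}$ on $\co(D_t)$. For $q, r \in \co(D_t)$ and $s \in [0,1]$, convexity of $A'$ and $B'$ yields
$$s\bigl(A' \cap (q - B')\bigr) + (1-s)\bigl(A' \cap (r - B')\bigr) \subset A' \cap \bigl(sq + (1-s)r - B'\bigr),$$
so applying planar Brunn--Minkowski to the sum on the left gives $g^{1/2}(sq + (1-s)r) \ge sg^{1/2}(q) + (1-s)g^{1/2}(r)$. Writing $p = \lambda z$, where $z$ is the intersection of the ray from $o$ through $p$ with $\partial\co(D_t)$, the hypothesis $p \in (1-4t^{-1}\sqrt{\gamma})\co(D_t)$ forces $\lambda \le 1 - 4t^{-1}\sqrt{\gamma}$, so concavity applied at $q = z$, $r = 0$ gives
$$g^{1/2}(p) \ge (1-\lambda)g^{1/2}(0) \ge 4t^{-1}\sqrt{\gamma}\cdot g^{1/2}(0).$$

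To close, take $d_\tau$ small enough that \Cref{etascaleinside} ensures $(1-\eta)K \subset \co(A), \co(B)$ for a fixed small $\eta$. Then
$$g(0) \ge (1-\eta)^2 |tK \cap -(1-t)K| \ge \pi(1-\eta)^2 t^2/12,$$
because $K \supset T$ contains the origin-centered disk of radius $\tfrac{1}{\sqrt{12}}$ while $tK$ and $-(1-t)K$ are origin-symmetric scalings, so their intersection contains the origin-centered disk of radius $t/\sqrt{12}$ for $t \le \tfrac{1}{2}$. For $\eta$ small this gives $g(0) > t^2/16$, and combining with the previous bound, $g(p) > 16t^{-2}\gamma \cdot t^2/16 = \gamma$ as desired. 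The ``in particular'' clause follows because $|oz| \le 2/\sqrt{3} < 5/4$ by \Cref{lengthsandareas}, so $|pz| \ge 5t^{-1}\sqrt{\gamma}$ implies $|pz|/|oz| > 4t^{-1}\sqrt{\gamma}$, placing $p$ in $(1-4t^{-1}\sqrt{\gamma})\co(D_t)$.

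There is no single hard step; the main conceptual input is recognizing that Brunn--Minkowski supplies concavity of $g^{1/2}$, which cleanly converts the easy interior bound $g(0) \gtrsim t^2$ (available because both $\co(A), \co(B)$ contain $(1-\eta)K$) into a uniform pointwise bound at the required depth.
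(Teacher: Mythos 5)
Your proof is correct, but it takes a genuinely different route from the paper. The paper's argument is local and geometric: it writes the boundary point as $z=tx+(1-t)y$, constructs explicit balls $B(x',r)\subset\co(A)$ and $B(y',r)\subset\co(B)$ of area exceeding $t^{-2}\gamma$ via the $(59^\circ,\tfrac13)$-bisecting property of \Cref{bisectingcor}, and splits into cases depending on whether the displacement $|xx'|$ is small or large (using \Cref{assumplem} in the latter case). You instead observe that the whole statement is equivalent to a pointwise lower bound $g(p)>\gamma$ on the covariogram $g(p)=|t\co(A)\cap(p-(1-t)\co(B))|$, then invoke the classical $\tfrac12$-concavity of the covariogram of convex bodies on its support $\co(D_t)$ (itself a one-line consequence of planar Brunn--Minkowski), which converts the easy interior estimate $g(o)\gtrsim t^2$ into the needed depth-$4t^{-1}\sqrt\gamma$ estimate with no case analysis. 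This is shorter, avoids the explicit ball constructions and the separate treatment of large displacements, and isolates the one geometric fact (concavity of $g^{1/2}$) doing all the work; the paper's approach, while more laborious here, reuses the bisecting-triangle machinery developed for the rest of the argument. One small omission on your end: you should also take $d_\tau$ small enough that $\gamma\le t^2/16$ (available from \Cref{gammato0}, and done explicitly in the paper), which is needed so that $1-4t^{-1}\sqrt{\gamma}\ge 0$ and the parameterization $p=\lambda z$ with $\lambda\in[0,1-4t^{-1}\sqrt\gamma]$ makes sense.
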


To show \Cref{4rootg}, we need the following lemma.

\begin{lem}
\label{assumplem}For every $\eta\in (0,1)$ and $d_{\tau}$ sufficiently small in terms of $\eta$, we have $(1-\eta)K \subset D_t$.
\end{lem}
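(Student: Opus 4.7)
The plan is to invoke the averaging argument of \Cref{averagingargument} with the common choice $R_A = R_B := (1-\eta/2)K$. First, by \Cref{etascaleinside} applied with parameter $\eta/2$ (shrinking $d_\tau$ in terms of $\eta$), we may assume $(1-\eta/2)K \subset \co(A)\cap \co(B)$, so this choice is admissible. Given any $z\in (1-\eta)K$ and letting $H = H_{-(1-t)/t,z}$, it suffices to show $|R_A\cap H(R_B)| > t^{-2}\gamma$, from which the observation yields $z\in D_t$.

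The main step is a uniform lower bound on $|R_A\cap H(R_B)|$ depending only on $\eta$. I would show $B(z,\, \eta/(2\sqrt{12})) \subset R_A\cap H(R_B)$. Writing $z = \mu k'$ with $k' \in R_A = (1-\eta/2)K$ and $\mu = (1-\eta)/(1-\eta/2)\in[0,1]$ (possible because $z\in (1-\eta)K$), a standard support-function argument applied to the convex set $R_A$ containing $o$ gives $d(z,\partial R_A) \ge (1-\mu)\,d(o,\partial R_A)$. Combined with $d(o,\partial R_A) = (1-\eta/2)\,d(o,\partial K) \ge (1-\eta/2)/\sqrt{12}$ --- which follows from $T\subset K$ and \Cref{lengthsandareas} --- and the identity $(1-\mu)(1-\eta/2) = \eta/2$, this collapses to $d(z,\partial R_A) \ge \eta/(2\sqrt{12})$. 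For the $H(R_B)$ side, note that $z+u\in H(R_B)$ iff $z-(t/(1-t))u\in R_B$, and since $t/(1-t)\le 1$ (as $t\le 1/2$), the same ball sits inside $H(R_B)$. Thus $|R_A\cap H(R_B)| \ge \pi\eta^2/48$.

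Finally, by \Cref{gammato0}, $\gamma\to 0$ as $d_\tau \to 0$, so choosing $d_\tau$ sufficiently small in terms of $\eta$ (and the fixed $\tau$) guarantees $t^{-2}\gamma \le \tau^{-2}\gamma < \pi\eta^2/48 \le |R_A\cap H(R_B)|$; \Cref{averagingargument} then delivers $z \in D_t$, and since $z\in (1-\eta)K$ was arbitrary we obtain $(1-\eta)K\subset D_t$. The only nontrivial ingredient is the uniform lower bound on $d(z,\partial R_A)$, which rests entirely on the inclusion $T\subset K$ built into the preliminary affine normalization; everything else is a direct application of \Cref{etascaleinside,gammato0,averagingargument}.
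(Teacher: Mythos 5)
Your proof is correct and reaches the paper's conclusion by the same high-level strategy---produce a ball around the target point $z$ that lies in both $\co(A)$ and $\co(B)$, then apply \Cref{averagingargument}---but your implementation of the ball-containment step is a genuine simplification. The paper centers a ball of radius $\eta/100$ at $z=(1-\eta)k$ and proves it lies in $\co(A)$ (and $\co(B)$) by a case split on $\lambda=|ok|/|ok'|$: the outer case $\lambda\ge\frac15$ uses the $(60^\circ,\frac12)$-bisecting property of $\partial K$ from \Cref{bisectinglem}, while the inner case uses the inclusion $B((1-\eta)\frac{100}{99}k,\frac{1}{99})\subset T$. You instead take $R_A=R_B=(1-\eta/2)K$, which \Cref{etascaleinside} places inside $\co(A)\cap\co(B)$, and extract $B\bigl(z,\eta/(2\sqrt{12})\bigr)\subset R_A$ from the elementary convexity fact that for a convex $C\ni o$, if $z=\mu w$ with $w\in C$ and $\mu\in[0,1]$ then $d(z,\partial C)\ge(1-\mu)\,d(o,\partial C)$ (one writes $z+u=\mu w+(1-\mu)\tfrac{u}{1-\mu}$ and uses $B(o,d(o,\partial C))\subset C$). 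With $\mu=\frac{1-\eta}{1-\eta/2}$, $k'=(1-\eta/2)k\in R_A$, and $d(o,\partial K)\ge 1/\sqrt{12}$ from $T\subset K$ (\Cref{lengthsandareas}), the algebra $(1-\mu)(1-\eta/2)=\eta/2$ gives the stated radius. This removes the reliance on the bisecting lemma and the case analysis entirely. Your handling of $H(R_B)$ via the equivalence $z+u\in H(R_B)\iff z-\tfrac{t}{1-t}u\in R_B$, together with $t/(1-t)\le 1$ from the normalization $t\le\frac12$, is also sound, and the final appeal to \Cref{gammato0} to force $\tau^{-2}\gamma<\pi\eta^2/48$ is exactly what is needed. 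In short: same scaffolding, but a cleaner and more self-contained ball-containment argument.
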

\begin{proof}
We may assume that $\eta \le 10^{-9}$.
We take $d_\tau$ sufficiently small in terms of $\eta$ such that
$\frac{1-\eta}{1-\frac{\eta}{2}}K \subset \co(A)$ by \Cref{etascaleinside}, and $t^{-2}\gamma<\pi(\frac{1}{100}\eta)^2$ by \Cref{gammato0}. First, we show that for every $k\in K$ we have
$$B\left((1-\eta)k,\frac{1}{100}\eta\right) \subset \co(A),\co(B).$$

We show the $\co(A)$ containment, the other containment's proof is identical. 


Write $k=\lambda k'$ with $k' \in \partial K$ and $\lambda \in [0,1]$.
Because $k'$ is $(60^{\circ},\frac{1}{2})$-bisecting we see that
$$B\left(\left(1-\frac{\eta}{2}\right)k',\frac{\eta}{2\sqrt{12}}\sin(30^{\circ})\right)\subset T_{k'}(60^{\circ},\frac{1}{2})\subset K,$$ as $|ok'| \ge \frac{1}{\sqrt{12}}$ by \Cref{lengthsandareas}. Thus $$B\left((1-\eta)k',\frac{\eta}{20}\right)\subset B\left((1-\eta)k',\frac{1-\eta}{1-\frac{\eta}{2}}\frac{\eta}{2\sqrt{12}}\sin(30^{\circ})\right)\subset \left(\frac{1-\eta}{1-\frac{\eta}{2}}\right)K\subset \co(A),$$ and so
$B((1-\eta)k,\frac{\lambda}{20}\eta)\subset \co(A)$. If $\lambda \ge \frac{1}{5}$, then $B((1-\eta)k,\frac{1}{100}\eta)\subset \co(A)$, as desired.

Otherwise, assume $\lambda <\frac{1}{5}$. By \Cref{lengthsandareas} we have $|k'|\le \frac{2}{\sqrt{3}}$, so it follows that $|(1-\eta)\frac{100}{99}k|+ \frac{1}{99} \le \frac{1}{\sqrt{12}}$, the distance from $o$ to $\partial T$, and hence  $B((1-\eta)\frac{100}{99}k,\frac{1}{99})\subset T$. Hence, we have $B((1-\eta)k,\frac{1}{100})\subset \frac{99}{100}T\subset \co(A).$ Thus we always have
$B((1-\eta)k,\frac{1}{100}\eta)\subset \co(A)$ as desired. 

Let $k\in K$. To check that $k=t(1-\eta)k+(1-t)(1-\eta)k \in D_t$, in the notation of \Cref{averagingargument} we take $R_A=R_B=B((1-\eta)k,\frac{1}{100}\eta)\subset \co(A),\co(B)$. Then $|R_A \cap H_{-\frac{1-t}{t},z}(R_B)|=|R_A|=\pi (\frac{1}{100}\eta)^2>t^{-2}\gamma$. Hence, we conclude by \Cref{averagingargument} that $z\in D_t$.
\end{proof}

\begin{proof}[Proof of \Cref{4rootg}]
Let $\eta=10^{-9}$, and take $d_{\tau}$ sufficiently small so that \Cref{bisectingcor} and \Cref{assumplem} apply, and that $\gamma \le  \frac{t^2}{16}$ by \Cref{gammato0}. Let $z=tx+(1-t)y \in \partial \co(D_t)$ where $x \in \partial \co(A)$ and $y \in \partial \co(B)$.  We will show that $z'=(1-4\lambda t^{-1}\sqrt{\gamma})z$ lies in $D_t$ for all $\lambda \in [1,\frac{t}{4\sqrt{\gamma}}]$.


By \Cref{bisectingcor} we have $x,y$ are $(59^{\circ},\frac{1}{3})$-bisecting. Define $x',y'$ analogously to $z'$, and note that $tx'+(1-t)y'=z'$ and $|xx'|,|yy'|, |zz'|\in [\frac{4}{\sqrt{12}}\lambda t^{-1}\sqrt{\gamma},\frac{8}{\sqrt{3}}\lambda t^{-1}\sqrt{\gamma}]$, $|oz| \le \frac{2}{\sqrt{3}}$ by \Cref{lengthsandareas}. Because $\frac{1}{4}|xx'|, \frac{1}{4}|yy'| \le |zz'|$, if either $|xx'|$ or $|yy'|$ is at least $\frac{1}{100}$, then $|zz'|\ge \frac{1}{25}$, which by \Cref{assumplem}  implies 

$$z'\in \left(1-\frac{|zz'|}{|oz|}\right)K\subset \left(1-\frac{\sqrt{3}}{50}\right)K\subset (1-\eta)K\subset D_t.$$ 

Assume now that $|xx'|,|yy'|<\frac{1}{100}$, so that the altitudes from $x$ (resp. $y$) of $T_x(59^{\circ},\frac{1}{3})$ (resp. $T_y(59^{\circ},\frac{1}{3})$) exceed $2|xx'|$ (resp. $2|yy'|$). Because $\lambda \ge 1$ we have 
$$|xx'|,|yy'|\ge \frac{4\sqrt{\gamma}}{\sqrt{12}}\lambda t^{-1}\ge 1.001t^{-1}\sqrt{\frac{\gamma}{\pi}}/\sin(29.5^{\circ}).$$ 
Together the last two sentences show that  $B(x',1.001t^{-1}\sqrt{\frac{\gamma}{\pi}})\subset  T_x(59^{\circ},\frac{1}{3})\subset \co(A)$, and $B(y',1.001t^{-1}\sqrt{\frac{\gamma}{\pi}})\subset  T_y(59^{\circ},\frac{1}{3}) \subset \co(B)$. By applying \Cref{averagingargument} with $R_{A}= B(x',1.001t^{-1}\sqrt{\frac{\gamma}{\pi}})$ and $R_B=B(y',1.001t^{-1}\sqrt{\frac{\gamma}{\pi}})$, we conclude that $z'\in D_t$. 
\begin{center}
\begin{tikzpicture}
\draw (-5,2) --(-4,0) node[anchor=north] {$x$}--(-3,2);
\draw (-1,2)--(0,0) node[anchor=north] {$y$}--(-2,1);
\draw (-4,1) node[anchor=south] {$x'$} circle (0.25);
\filldraw (-4,1) circle (1pt);
\draw (-1,1) node[anchor=south] {$y'$} circle (0.25);
\filldraw (-1,1) circle (1pt);
\draw[dotted] (-4,1)--(-1,1);
\draw[->] (-4,0)--(-4,2.5);
\draw[->] (0,0)--(-2.5,2.5);
\draw[->] (-2,0) node[anchor=north] {$z$}--(-2.75,1.5);
\filldraw (-2.5,1) node[anchor=south] {$z'$} circle (1pt);
\end{tikzpicture}
\end{center}
Finally, $|zz'|=4t^{-1}\sqrt{\gamma}|oz|\le \frac{8}{\sqrt{3}}t^{-1}\sqrt{\gamma} < |pz|$, so $p\in D_t$.
\end{proof}

\section{Decomposing $\partial \co(D_t)$ into good arcs, and bad arcs of small total angular size}
\label{decgoodbadsection}
Recall that $M\in 2\mathbb{N}$ be some universal constant ($1000$ suffices), and set $\alpha=\frac{720^{\circ}}{M}$.

\begin{defn}
For any $s$, we denote by $\bads$ the collection of arcs formed by the set of all points in $\partial \co(D_t)$ within Euclidean distance $s$ of a $(\theta,\ell)$-bad point (which is a union of arcs). We let $\goods$ denote the remaining arcs in $\partial \co(D_t)$, which we subdivide into arcs of angular length at most $\frac{1}{3}\alpha$ 
\end{defn}

\begin{prop}\label{partitionlem}For $d_\tau$ sufficiently small, there exists an increasing function $\ell=\ell(\theta)$ for $\theta<180^\circ$, such that the union of arcs $\bigcup \hbad$ has total angular size at most $\frac{1}{3}\alpha$.
\end{prop}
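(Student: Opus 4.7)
The plan is to bound the total Euclidean arc length of $\bigcup\hbad$ in $\partial\co(D_t)$ and then convert to an angular-size bound using the fact that every point of $\partial\co(D_t)$ is at bounded distance from $o$.

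First I would characterize $(\theta,\ell)$-badness in terms of the turning of $\partial\co(D_t)$. Since $A,B$ are polygonal, $\co(D_t)$ is a convex polygon, and the sum of its exterior angles is $360^\circ$. A point $p\in\partial\co(D_t)$ is $(\theta,\ell)$-bad exactly when the arc $S_p:=\{\hat d\in S^1:p+\ell\hat d\in\co(D_t)\}$ has angular extent less than $180^\circ-\theta$. Unpacking this geometrically, one shows that, up to lower-order corrections, $p$ is bad iff the total exterior turning of $\partial\co(D_t)$ within Euclidean distance $\ell$ of $p$ on both sides exceeds $\theta$: each vertex of exterior angle $\beta$ lying within Euclidean distance $\ell$ of $p$ truncates the corresponding extreme direction of $S_p$ by approximately $\beta$, and clusters of smaller-angle vertices contribute analogously.

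Next I would show that the bad set decomposes into $O(1/\theta)$ connected arcs, each of Euclidean length $O(\ell)$. For $p$ to be bad, its Euclidean $\ell$-neighborhood on the boundary must contain turning $\geq\theta$; if turning were spread over an arc length much larger than $\ell$, points in the middle would be good. So each connected bad cluster is supported within Euclidean arc length $O(\ell)$ and contains a disjoint chunk of turning $\geq\theta$. Since the total turning is $360^\circ$, there are at most $O(1/\theta)$ clusters. Expanding each cluster by Euclidean distance $100t^{-1}\ell$ to form $\bigcup\hbad$ then yields total Euclidean arc length at most $C\ell(1+t^{-1})/\theta$, where to justify that the expansion of each cluster is a single arc rather than reaching across $\co(D_t)$ I would use \Cref{tangentangle} together with $d_\tau$ small.

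To convert Euclidean length to angular size, I would use that by \Cref{etascaleinside} and \Cref{lengthsandareas}, every $z\in\partial\co(D_t)$ satisfies $|oz|\geq(1-\eta)/\sqrt{12}$, so an arc of Euclidean length $L$ subtends angular size at most $\sqrt{12}L/(1-\eta)$. Combining, the total angular size of $\bigcup\hbad$ is at most $C'\ell(1+\tau^{-1})/\theta$. Choosing $\ell(\theta)=c\tau\theta/M$ for a small enough absolute constant $c$ makes this at most $\frac{1}{3}\alpha=\frac{240^\circ}{M}$, and $\ell(\theta)$ is increasing in $\theta$ as required. This matches the form of the constraint imposed on $\ell$ in the ``Constants and their dependencies'' paragraph.

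The main technical obstacle is the turning-based characterization of badness: carefully controlling how the extent of $S_p$ depends on the tangent rays from $p$ to $\co(D_t)$, and in particular handling contributions from multiple vertices clustered within Euclidean distance $\ell$ of $p$ so as to rule out a long bad cluster of small total turning. Once this is in hand, the counting of clusters via the total $360^\circ$ budget and the Euclidean-to-angular conversion are routine.
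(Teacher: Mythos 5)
Your proposal is correct and rests on the same two pillars as the paper's proof: the $360^\circ$ budget on total exterior turning of the convex boundary bounds the number of ``bad'' regions, and the lower bound $|oz|\gtrsim 1/\sqrt{12}$ on $\partial\co(D_t)$ converts Euclidean length to angular size. The difference is in the device used to relate $(\theta,\ell)$-badness to turning. The paper inscribes a polygon in $\partial\co(D_t)$ with sides of chord length $\ell$ and shows by a short triangle argument that any arc $m_2m_3$ whose two adjacent polygon angles exceed $180^\circ-\theta/2$ consists of $(\theta,\ell)$-good points (with $m_1,m_4$ as witnesses), then counts the $\le \frac{720^\circ}{\theta}$ small polygon angles. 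You instead work with the turning measure on the curve directly, claiming that $p$ bad forces turning $>\theta$ within Euclidean distance $\ell$ of $p$, and then count disjoint high-turning $\ell$-balls. Your characterization is in fact exact in the one direction you actually need (taking $q,r$ to be the two boundary points at Euclidean distance exactly $\ell$ from $p$, the chord directions differ from the one-sided tangents at $p$ by at most the intermediate turning, so $\angle qpr \ge 180^\circ - (\text{turning in }B(p,\ell))$), so your hedge ``up to lower-order corrections'' and the claimed ``iff'' are more than you need; only the implication (bad $\Rightarrow$ nearby turning $>\theta$) enters. To make the cluster decomposition fully rigorous you would pick a maximal $3\ell$-separated set $\mathcal{P}$ of bad points (disjoint $\ell$-balls give $|\mathcal{P}|<360^\circ/\theta$), note every bad point lies within $3\ell$ of $\mathcal{P}$, and use that the arc length of $\partial\co(D_t)$ inside a Euclidean ball of radius $r$ is $O(r)$ by convexity; the paper's polygon discretization sidesteps this comparability step, which is the extra technical overhead of your route. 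Both give the same bound and the same choice $\ell(\theta)\sim\tau\theta/M$.
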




\begin{proof}
Take $d_\tau$ sufficiently small so that $\frac{99}{100}K\subset \co(D_t)$ by \Cref{etascaleinside}.

Choose a point on $\partial \co(D_t)$, and form a polygon $P$ inscribed in $\partial \co(D_t)$ by traveling around clockwise and picking the first vertex at distance $\ell$ from the previous vertex, all the way until the polygon would self-intersect, and then we simply join the first and last vertex with an edge. Then all sides are of length $\ell$ except one side of possibly smaller size. Moreover, each vertex of the polygon is within distance $\ell$ of every point of the next subtended arc of $\partial \co(D_t)$.

We let $\mathcal{S}^{\text{good}}$ be the collection of arcs of $\co(D_t)$ which arise as the arc subtended by $m_2m_3$, where $m_1,m_2,m_3,m_4$ are four consecutive vertices of the polygon $\partial \co(D_t)$, with $|m_1m_2|=|m_2m_3|=|m_3m_4|=\ell$ and $\angle m_1m_2m_3,\angle m_2m_3m_4 \ge 180^\circ-\frac{\theta}{2}$. We claim that every point $s\in \arc{q}\in \mathcal{S}^{\text{good}}$ is $(\theta,\ell)$-good. To see this, note that the angle condition in particular implies that $\angle m_1m_2m_3,\angle m_2m_3m_4>90^\circ$, so the rays $m_1m_2$ and $m_4m_3$ meet at a point $r$ as shown in the figure below. 
\begin{center}
    \begin{tikzpicture}[scale=1.5]
    \coordinate (X') at (2,0);
\coordinate (Y') at (4,4);
\coordinate (Z') at (0,4);
\coordinate (X) at ($(Y')!0.5!(Z')$);
\coordinate (XXY) at (1.5,3.8);
\coordinate (XYY) at (1,3);
\coordinate (Y) at ($(X')!0.5!(Z')$);
\coordinate (YYZ) at (1.8,1);
\coordinate (YZZ) at (2.2,1);
\coordinate (Z) at ($(X')!0.5!(Y')$);
\coordinate (ZZX) at (3,3);
\coordinate (ZXX) at (2.5,3.8);
\draw [dashed] (X)--(XXY)--(XYY)-- (Y)--(YYZ)--(YZZ)--(Z) node[anchor=south west]{$P$}--(ZZX)--(ZXX)--cycle;

\coordinate (M1) at ($(Y)!0.5!(YYZ)$);
\coordinate (M2) at ($(Y)!0.5!(XYY)$);
\coordinate (M3) at ($(XXY)!0.25!(XYY)$);
\coordinate (M4) at ($(X)!0.5!(ZXX)$);
\coordinate (S) at ($(XYY)!0.25!(XXY)$);

\tkzInterLL(M1,M2)(M3,M4) \tkzGetPoint{E}
\filldraw (M1) circle (1pt);
\filldraw (M2) circle (1pt);
\filldraw (M3) circle (1pt);
\filldraw (M4) circle (1pt);
\filldraw (E) circle (1pt);
\filldraw (S) circle (1pt);

\draw (E)--(M1) node[anchor=east] {$m_1$};
\draw (M2) node[anchor=east] {$m_2$}--(M3) node[anchor=south] {$m_3$};
\draw (E)--(M4) node[anchor=south west] {$m_4$};
\draw (E) node[anchor=east] {$r$};
\draw (S) node[anchor=east] {$s$};
    \end{tikzpicture}
\end{center}
We now show that $m_1,m_4$ realize $s$ as a $(\theta,\ell)$-good point. First, note that $|m_1s| \ge \ell=|m_1m_2|$ because $\angle m_1m_2s \ge 90^{\circ}$. Similarly $|m_4s| \ge \ell=|m_3m_4|$. Finally, $\angle m_1
sm_4 \ge \angle m_1rm_4 \ge 180^{\circ}-\theta$, where the first inequality follows as $s$ lies inside the triangle $m_1rm_4$, and the second as $\angle rm_2m_3,\angle rm_3m_2 \le \frac{\theta}{2}$.




Let $\mathcal{S}^{\text{bad}}$ be the collection of remaining arcs of $\partial \co(D_t)$ subtended by sides of $P$ which are not in $\mathcal{S}^{\text{good}}$. As the sum of the exterior angles of $P$ is $360^\circ$, the number of interior angles which are strictly less than $180^\circ-\frac{\theta}{2}$ is at most $\frac{720^\circ}{\theta}$.  Thus, $|\mathcal{S}^{\text{bad}}|\le \frac{1440^\circ}{\theta}+3$ (we add $3$ for the arc subtended by the last side of the polygon and the two adjacent arcs).  Note that every $(\theta,\ell)$-bad point is contained in an arc in $\mathcal{S}^{\text{bad}}$.

For each arc $\arc{q}\in\mathcal{S}^{\text{bad}}$ let $x_{\arc{q}}$ denote its clockwise starting point and $I_{\arc{q}}:=\partial \co(D_t) \cap B(x_{\arc{q}},(1+100t^{-1})\ell)$ the set of all points of  $\partial \co(D_t)$ within Euclidean distance at most $(1+100t^{-1})\ell$ of $x_{\arc{q}}$. This includes the points within Euclidean distance at most $100t^{-1}\ell$ of $\arc{q}$. Let $I:= \bigcup I_{\arc{q}} $, so that $  \bigcup \hbad \subset I$. 

Recall that $\frac{99}{100}K\subset\co(D_t)$, so that $\partial \co(D_t)\subset T'\setminus \frac{99}{100}T$ and thus $|ox|\geq \frac{99}{100}\frac{1}{\sqrt{12}}$ by \Cref{lengthsandareas}. The angular size of $B(x_{\arc{q}},(1+100t^{-1})\ell)$ and thus of $I_{\arc{q}}$ is at most  $2\sin^{-1}((1+100t^{-1})\ell)\frac{100}{99}\sqrt{12})\leq 4(1+100t^{-1})\ell\frac{100}{99}\sqrt{12}$.
We conclude that $ \bigcup \hbad \subset I$ has angular size at most $$\left(\frac{1440^{\circ}}{\theta}+3\right)4(1+100t^{-1})\ell\frac{100}{99}\sqrt{12},$$
which we can make smaller than $\frac13\alpha$ by choosing $\ell$ sufficiently small.

\end{proof}

\begin{defn}
We will always denote by $\ell=\ell(\theta)$ the increasing function of $\theta$ produced by the lemma above.
\end{defn}

\begin{obs}\label{2ldistance}
Every point in an arc in $\goods$ has distance at least $s$ to all $(\theta,\ell)$-bad points in $\partial \co(D_t)$, and we have the partition (up to a finite collection of endpoints)
$$\bigsqcup \goods \sqcup \bigsqcup \bads = \partial \co(D_t).$$
\end{obs}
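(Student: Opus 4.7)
The plan is to note that this observation is essentially immediate from the definitions given just before it, and the work is only to make the set-theoretic bookkeeping explicit.

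For the first claim, I would unpack the definition of $\bads$: by construction it is the set of all points $p \in \partial \co(D_t)$ with Euclidean distance strictly less than $s$ from some $(\theta,\ell)$-bad point on $\partial\co(D_t)$, organized as a union of arcs. A point in an arc of $\goods$ is, by definition of $\goods$ as the complementary arcs, not contained in $\bads$, hence lies at Euclidean distance at least $s$ from every $(\theta,\ell)$-bad point. (Strictly speaking, the endpoints of $\goods$-arcs on the boundary with $\bads$-arcs lie at distance exactly $s$, which is the reason for the "up to a finite collection of endpoints" qualifier in the partition statement.)

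For the partition, I would argue as follows. The set of $(\theta,\ell)$-bad points is closed in $\partial \co(D_t)$ (since being $(\theta,\ell)$-good is an open condition: it is witnessed by a pair of points $q,r$ with strict inequalities on lengths and angle that persist under small perturbation of $p$). Hence its $s$-neighborhood on $\partial \co(D_t)$ is a closed union of finitely many arcs, namely $\bads$, and $\partial \co(D_t) \setminus \bads$ is a finite disjoint union of open arcs. The collection $\goods$ is obtained from these complementary arcs by further subdividing each into sub-arcs of angular size at most $\frac13 \alpha$, which only introduces finitely many additional endpoints. Therefore
\[
\bigsqcup \goods \sqcup \bigsqcup \bads = \partial \co(D_t)
\]
holds up to a finite set of shared endpoints, as claimed.

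The only mild issue to flag is the finiteness statement for the arcs in $\bads$, which requires knowing that the set of $(\theta,\ell)$-bad points on $\partial \co(D_t)$ has finitely many connected components (or, more weakly, that its $s$-neighborhood does). This follows because $A,B$ and hence $\co(D_t)$ are finite unions of polygonal regions by the reductions in \Cref{Setup}, so $\partial \co(D_t)$ is a polygon and the $(\theta,\ell)$-good/bad decomposition is piecewise constant on the interiors of its edges. This is the one verification I would expect to spell out; everything else is purely a restatement of the definitions of $\bads$ and $\goods$.
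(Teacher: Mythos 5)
Your proposal is correct and takes essentially the same route as the paper, which records this as an \emph{Observation} without proof precisely because both claims follow immediately from the definitions of $\bads$ (points within distance $s$ of a bad point) and $\goods$ (the complementary arcs, further subdivided), exactly the unpacking you perform. Your additional remarks on finiteness of the arcs, justified via the polygonal reductions of \Cref{Setup}, are consistent with the paper's implicit assumptions and do not change the argument.
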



\section{Replacing $5t^{-1}\sqrt{\gamma}$ with $\xi \sqrt{\gamma}$ on arcs in $\pgood$}
\label{xirootgammasection}



This section is devoted to proving the following proposition.
\begin{prop}\label{guarenteelem}
For every $\xi \in (0,1)$ there exists $\theta>0$, such that for $d_\tau$ sufficiently small in terms of $\xi$ the following is true. For every $p \in \arc{q} \in \pgood$ (recalling $\ell=\ell(\theta)$) and $p' \in op$ with $|pp'| \ge \xi \sqrt{\gamma}$, we have $p' \in D_t$.
\end{prop}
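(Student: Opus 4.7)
The plan is to apply the averaging argument \Cref{averagingargument} with long thin parallelograms, exploiting the near-straightness of $\partial\co(D_t)$ on good arcs. Fix $\xi>0$ and choose $\theta$ of order $\xi^{2}\tau^{2}$, specifically with $\tan\theta<\xi^{2}\tau^{2}\sin^{2}(29^{\circ})/4$; set $\ell=\ell(\theta)$ via \Cref{partitionlem} and take $d_{\tau}$ sufficiently small that all preceding results apply and $\gamma$ is as small as we need.

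Fix $p\in\arc{q}\in\pgood$, set $u=p/|op|$, and write $p'=p-\sigma u$ with $\sigma\geq\xi\sqrt{\gamma}$. Choose $x\in\partial\co(A)$ and $y\in\partial\co(B)$ with $p=tx+(1-t)y$ whose outward normals at $x,y,p$ all coincide; let $e_{1}$ be the common tangent direction and $e_{2}$ the inward normal. By \Cref{tangentangle} the angle $\psi$ between $e_{1}$ and $-u$ lies in $(29^{\circ},151^{\circ})$, so $\sin\psi\geq\sin 29^{\circ}$. Set $x'=x-\sigma u$, $y'=y-\sigma u$; then $p'=tx'+(1-t)y'$, and in coordinates $(e_{1},e_{2})$ centred at $x$ (resp.\ $y$) the point $x'$ (resp.\ $y'$) has coordinates $(\sigma\cos\psi,\sigma\sin\psi)$.

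Since $p\in\pgood$, every point of $\partial\co(D_{t})$ within Euclidean distance $2\ell$ of $p$ is $(\theta,\ell)$-good. Unpacking the definition traps $\partial\co(D_{t})$ in a strip of width $O(\theta\ell)$ about its tangent line at $p$ over an arc of Euclidean extent at least $\ell$ on each side, so tangent directions along this arc stay within $\theta$ of $e_{1}$. Passing through the Minkowski identity $\co(D_{t})=t\co(A)+(1-t)\co(B)$, the corresponding arcs of $\partial\co(A),\partial\co(B)$ near $x,y$ also have tangent variation at most $\theta$, so in the local coordinates $\co(A)$ contains the graph region $\{(\zeta_{1},\zeta_{2}):|\zeta_{1}|\leq L_{A},\ \zeta_{2}\geq\tan(\theta)|\zeta_{1}|\}$, and analogously for $\co(B)$, with $tL_{A}+(1-t)L_{B}$ at least comparable to $\ell$. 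Build rectangles $R_{A}=x'+[-L,L]e_{1}+[-h,h]e_{2}$ and $R_{B}=y'+[-L,L]e_{1}+[-h,h]e_{2}$ with $L=\sigma\sin\psi/(2\tan\theta)$ and $h=\tfrac{1}{2}\sigma\sin\psi$. Since $L=O(\sqrt{\gamma}/\theta)\to 0$ while $L_{A},L_{B}$ stay bounded below, once $\gamma$ is small enough the graph bound together with $h\leq\sigma\sin\psi-\tan(\theta)L$ ensures $R_{A}\subset\co(A)$ and $R_{B}\subset\co(B)$.

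Apply \Cref{averagingargument} with $H=H_{-(1-t)/t,p'}$. Since $H(y')=x'$, the rectangles are congruent and aligned, and $\tfrac{1-t}{t}\geq 1$ (as $t\leq\tfrac{1}{2}$), the image $H(R_{B})$ is a rectangle centred at $x'$ containing $R_{A}$, whence $|R_{A}\cap H(R_{B})|=|R_{A}|=4Lh=\sigma^{2}\sin^{2}\psi/\tan\theta$. By our choice of $\theta$ and $\sigma\geq\xi\sqrt{\gamma}$ this exceeds $t^{-2}\gamma$, so \Cref{averagingargument} yields $p'\in D_{t}$. The delicate step is the transfer of straightness from $\partial\co(D_{t})$ to usable structure on both $\partial\co(A)$ and $\partial\co(B)$: the Minkowski sum only supplies the weighted additivity $tL_{A}+(1-t)L_{B}\gtrsim\ell$, leaving open that one of $L_{A},L_{B}$ could be very small; this is bypassed because the required $L$ is $O(\sqrt{\gamma}/\theta)\to 0$ by \Cref{gammato0}, and any genuinely unbalanced degenerate configuration would force tangent variation at $p$ to exceed $\theta$, contradicting $p\in\pgood$.
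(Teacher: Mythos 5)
The paper's proof of \Cref{guarenteelem} splits into two cases according to $|x_Ay_B|$, and in the near-range case transfers goodness from $\partial\co(D_t)$ to $\co(A),\co(B)$ \emph{radially} via \Cref{switchcor} (using the scaling $(1-\eta)K\subset\co(A),\co(B)$), then feeds two sector-shaped regions $S^{\pm}$ into \Cref{quadrilateral}. Your argument is genuinely different — one pass with a pair of congruent thin rectangles whose aspect ratio $L/h\sim 1/\tan\theta$ is dictated by the choice of $\theta$ — and it has a real gap at the step you yourself flag as delicate.

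The failure is this: the $(\theta,\ell)$-goodness of the arc of $\partial\co(D_t)$ around $p$ does \emph{not} imply small tangent variation of $\partial\co(A)$ near $x$ \emph{and} $\partial\co(B)$ near $y$, and your proposed bypass does not repair this. Suppose $\co(A)$ has a flat edge $E$ with a small angle change $\psi_A<\theta$ at each end, and $\co(B)$ has a vertex $v$ with interior angle $\Psi$ satisfying $2\theta<\Psi<121^{\circ}$ (allowed, since \Cref{bisectingcor} only forces interior angles to be at least $59^{\circ}$). Take $p$ on the edge $tE+(1-t)v$ of $\partial\co(D_t)$. Then $p$ lies on a flat stretch of $\partial\co(D_t)$ whose endpoints see only the small angle change $\min(\psi_A,\Psi)=\psi_A<\theta$, so $p$ and its $2\ell$-neighbourhood are $(\theta,\ell)$-good and $p\in\arc{q}\in\pgood$. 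Yet $y=v$ is a corner of $\co(B)$ with opening $<180^{\circ}-2\theta$, and no rectangle of aspect ratio $1/\tan\theta$ centred at depth $\sigma\sin\psi$ below $v$ fits inside $\co(B)$: the bottom corners $(\pm L,\sigma\sin\psi-h)$ would need $\sigma\sin\psi-h\geq L\tan(\Psi/2)$, i.e.\ $\tan\theta\geq\tan(\Psi/2)$, which is false. This is independent of the scale $\sigma$, so shrinking $L=O(\sqrt{\gamma}/\theta)$ does not help — the ratio $L/h$ is what kills you, not the absolute size — and the degenerate configuration does \emph{not} force tangent variation at $p$ to exceed $\theta$, so the stated bypass is simply incorrect.

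The paper's proof avoids this precisely through the case split. When $|x_Ay_B|<\ell$, the points $x_{D_t},y_{D_t}$ lie within $2\ell$ of $p$ and are therefore $(\theta,\ell)$-good, and \Cref{switchcor} then makes $x_A,y_B$ themselves $(2\theta,\ell/2)$-good — in particular $y_B$ \emph{cannot} be a sharp corner. (In the example above this shows $|x_Ay_B|\geq\ell$ must hold.) When $|x_Ay_B|\geq\ell$, the sector at $x_A$ (resp.\ $y_B$) is obtained from the triangle $ox_Ay_A$ (resp.\ $oy_Bx_B$), which is large no matter what the local geometry of $\co(B)$ looks like at $y_B$. To repair your argument you would need to reproduce this dichotomy (or replace the rectangle at the sharp end by a wedge adapted to the supporting triangle from the origin), which in effect brings you back to the structure of the published proof.

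Two smaller points: your choice $\tan\theta<\xi^2\tau^2\sin^2(29^{\circ})/4$ and the computation $|R_A\cap H(R_B)|=4Lh=\sigma^2\sin^2\psi/\tan\theta>t^{-2}\gamma$ are fine, and $H(y')=x'$ does hold, so the averaging step is correct once the inclusions $R_A\subset\co(A)$, $R_B\subset\co(B)$ are granted. Also, you should state explicitly that $\theta$ must additionally be small enough that $\ell(\theta)$ is defined and the various preliminaries apply — the paper is careful to pin down this dependence.
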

We outline the proof of \Cref{guarenteelem}. Suppose first that $p$ is the $t$-weighted average of points $x_A$ and $y_B$ which are distance at most $\ell$ apart. Then $x_{D_t}$, $y_{D_t}$ are both close enough to $p$ that by definition of $\pgood$, $x_{D_t}$ is $(\theta,\ell)$-good in $\co(A)$ and $y_{D_t}$ is $(\theta,\ell)$-good in $\co(B)$, which by \Cref{switchcor} implies $x_A,y_B$ are $(2\theta,\frac{\ell}{2})$-good, yielding certain angular regions at $x_A$ and $y_B$ lying in $\co(A)$ and $\co(B)$ respectively.

If instead the distance is at least $\ell$, then the triangles $ox_Ay_A$ and $oy_Bx_B$ serve as the large angular regions at $x_A$ and $y_B$ respectively.

In either case, the fact that $p \in \partial \co(D_t)$ implies the angular regions are in suitable directions so that \Cref{quadrilateral} applies, showing in either case these regions are suitable for an application of \Cref{averagingargument}, and we conclude.

\begin{lem}
\label{arcsinlem}
If we perturb the endpoints of a line segment of length $\ell$ each by an amount $r<\frac{\ell}{2}$, then the newly created line segment is rotated by at most $\sin^{-1}\frac{2r}{\ell}$.
\end{lem}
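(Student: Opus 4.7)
The plan is to reduce the two-endpoint perturbation to a one-endpoint perturbation and then invoke a standard tangent-line computation. Let the original segment be $AB$ with $|AB|=\ell$, and let $A', B'$ be the perturbed endpoints with $|AA'|, |BB'| \le r$. First translate the entire perturbed segment by $-\vec{AA'}$ so that the image of $A'$ coincides with $A$; the image $B''$ of $B'$ satisfies
$$\vec{BB''} = \vec{BB'} - \vec{AA'}, \qquad |BB''| \le |BB'| + |AA'| \le 2r.$$
Since translation preserves direction, the angle between $\vec{AB}$ and $\vec{A'B'}$ equals $\angle BAB''$, so it suffices to bound this last angle.

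Next I would invoke the following elementary fact: if $A$ is a point at distance $\ell$ from the center of a disk of radius $\rho < \ell$, then the angular wedge at $A$ subtended by the disk has half-opening exactly $\sin^{-1}(\rho/\ell)$, realized by the two tangent lines from $A$. Applying this with the disk of radius $2r$ centered at $B$ (which, by the hypothesis $r < \ell/2$, does not contain $A$), every candidate $B''$ in that disk satisfies
$$\angle BAB'' \le \sin^{-1}\!\left(\tfrac{2r}{\ell}\right).$$
Combining this with the reduction above gives the desired bound on the rotation angle.

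There is essentially no obstacle here: the only thing to check is that the tangent-line computation is valid, which requires precisely the hypothesis $r < \ell/2$ stated in the lemma. The bound is also sharp, attained when $\vec{AA'}$ and $\vec{BB'}$ are antiparallel and perpendicular to $\vec{AB}$ of length exactly $r$, so no further refinement is available or needed.
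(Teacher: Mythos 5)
Your proof of the bound is correct and is essentially the same approach as the paper's one-line proof: both reduce to a tangent-line computation. The paper phrases it via the interior bitangent to the two radius-$r$ disks around the endpoints; you instead absorb the perturbation at one endpoint by translation, obtaining a single radius-$2r$ disk at the other endpoint and taking the tangent from the fixed vertex. These give the same angle $\sin^{-1}(2r/\ell)$ (tangent from a point at distance $\ell$ to a disk of radius $2r$, or equivalently from the midpoint at distance $\ell/2$ to a disk of radius $r$), so the two arguments are interchangeable.

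One small but genuine error in your closing remark: the bound is \emph{not} attained by antiparallel perturbations of length $r$ perpendicular to $\vec{AB}$. With $A=(0,0)$, $B=(\ell,0)$, $A'=(0,-r)$, $B'=(\ell,r)$, the new direction is $(\ell,2r)$, giving rotation $\arctan(2r/\ell)$, which is strictly less than $\sin^{-1}(2r/\ell)$. The bound is indeed sharp, but equality occurs when $\vec{AA'}$ and $\vec{BB'}$ are antiparallel of length $r$ and aligned with the tangent construction, i.e.\ pointing along the direction from $B$ to the tangency point $T$ of the line from $A$ to the radius-$2r$ disk; this direction is perpendicular to $AT$, not to $AB$. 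Equivalently, in the paper's picture the extremal $A',B'$ are the tangency points of the interior bitangent, and the bitangent is not perpendicular to $\vec{AA'}$'s projection onto $AB$. This does not affect the validity of the lemma, which asserts only the upper bound, but the sharpness claim as written is false.
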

\begin{proof}
Consider two circles of radius $r$ around the two endpoints of the segment, then the maximally rotated segment is one of the interior bitangents to these circles.
\end{proof}

\begin{lem}
\label{distlinepoint}
In a triangle with vertices $a,b,c$, suppose that $\angle acb \in (28^{\circ},180^{\circ}-28^{\circ})$. Then the distance from $c$ to $ab$ is at least
$\sin(14^{\circ})\min(|ac|,|bc|).$
\end{lem}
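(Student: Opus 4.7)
The plan is to reduce the statement to a bound on a single sine, using the elementary formula that the distance $h_c$ from $c$ to line $ab$ equals $|ac|\sin(\angle bac)$ (equivalently $|bc|\sin(\angle abc)$). Assuming WLOG that $|ac|=\min(|ac|,|bc|)$, it therefore suffices to show $\sin(\angle bac)\ge \sin(14^\circ)$.

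The key observation is that since in any triangle the smaller side is opposite the smaller angle, $|ac|\le |bc|$ forces $\angle abc \le \angle bac$. Combining this with $\angle bac+\angle abc = 180^\circ - \angle acb$, I get $\angle bac \ge (180^\circ-\angle acb)/2$ and also $\angle bac \le 180^\circ - \angle acb$. The hypothesis $\angle acb \in (28^\circ, 180^\circ - 28^\circ)$ then places $\angle bac$ in the open interval $(14^\circ, 152^\circ)$.

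To finish, I would use that $\sin$ is increasing on $[0^\circ,90^\circ]$ and symmetric about $90^\circ$ on $[0^\circ,180^\circ]$, so its minimum on $(14^\circ,152^\circ)$ is $\min(\sin(14^\circ),\sin(152^\circ)) = \min(\sin(14^\circ),\sin(28^\circ)) = \sin(14^\circ)$. Hence $\sin(\angle bac)\ge \sin(14^\circ)$, giving $h_c \ge \sin(14^\circ)\min(|ac|,|bc|)$ as required. There is no real obstacle here: this is a routine trigonometric identity combined with a monotonicity argument, and the constants $28^\circ$ and $14^\circ$ line up exactly because halving the complementary angle range $(0^\circ,152^\circ)$ produces the bound $14^\circ$.
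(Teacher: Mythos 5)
Your proof is correct and is essentially the paper's argument: both reduce to the altitude formula $h_c = |ac|\sin(\angle bac)$ (the paper writes it as $|cz|=\cos(\angle acz)|ac|$ after dropping the perpendicular foot $z$) and then bound the relevant angle. The only difference is organizational — the paper takes the WLOG on which of the two split angles at $c$ is at most $90^\circ-14^\circ$, while you take the WLOG on which of $|ac|,|bc|$ is smaller and then deduce the angle bound from the side-angle monotonicity.
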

\begin{proof}
Let $z$ be the foot of the perpendicular from $c$ to the line $ab$. We have either $\angle acz \le 90^{\circ}-14^{\circ}$ or $\angle bcz \le 90^{
\circ}-14^{\circ}$. Suppose without loss of generality that $\angle azc \le 90^{\circ}-14^{\circ}$. Then $|cz| = (\cos \angle azc)|ac| \ge \sin(14^{\circ})|ac|$.
\end{proof}

\begin{lem} \label{switchcor}
For $d_\tau$ sufficiently small in terms of $\theta$, if $x_{D_t}$ is $(\theta, \ell)$-good in $\co(D_t)$, then $x_A$ is $(2\theta,\ell/2)$-good in $\co(A)$ and $x_B$ is $(2\theta,\ell/2)$-good in $\co(B)$.
\end{lem}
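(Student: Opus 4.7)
The plan is a perturbation argument exploiting that $\co(A) \supset (1-\eta)K \supset (1-\eta)\co(D_t)$ for small $\eta$, by \Cref{etascaleinside}. This lets me rescale witnesses $q,r \in \co(D_t)$ for $x_{D_t}$ being $(\theta,\ell)$-good to land inside $\co(A)$, obtaining a witness pair for $x_A$ being $(2\theta, \ell/2)$-good after accounting for the small vector $e := (1-\eta)x_{D_t} - x_A$. The argument for $x_B$ in $\co(B)$ will be identical by symmetry.

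Concretely, I would fix $\eta > 0$ sufficiently small in terms of $\theta$ and $\ell = \ell(\theta)$, and apply \Cref{etascaleinside} to choose $d_\tau$ small enough that $(1-\eta)K \subset \co(A), \co(B), \co(D_t) \subset K$. Since $x_A$ and $x_{D_t}$ lie on the same ray $ox$ with their distances from $o$ both in $[(1-\eta)|ox_K|, |ox_K|]$, the vector $e := (1-\eta)x_{D_t} - x_A$ (being parallel to $ox$) has magnitude at most $(2\eta - \eta^2)|ox_K| \le 4\eta/\sqrt{3}$ via the bound on $|ox_K|$ in \Cref{lengthsandareas}.

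Next, take witnesses $q, r \in \co(D_t)$ with $v_q := q - x_{D_t}$, $v_r := r - x_{D_t}$ satisfying $|v_q|, |v_r| \ge \ell$ and $\angle(v_q, v_r) \ge 180^\circ - \theta$, and define $q_A := (1-\eta)q$, $r_A := (1-\eta)r$. Since $q, r \in \co(D_t) \subset K$, both dilations lie in $(1-\eta)K \subset \co(A)$. A direct expansion gives $q_A - x_A = (1-\eta)v_q + e$ and $r_A - x_A = (1-\eta)v_r + e$, so by the triangle inequality $|x_A q_A|, |x_A r_A| \ge (1-\eta)\ell - 4\eta/\sqrt{3} \ge \ell/2$ for $\eta$ small in terms of $\ell$.

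For the angular condition, I would apply \Cref{arcsinlem} to each of the segments from $x_A$ to $x_A + (1-\eta)v_q$ and from $x_A$ to $x_A + (1-\eta)v_r$ with the shift $e$ at the moving endpoint, bounding the rotation of each arm by $\sin^{-1}(2|e|/((1-\eta)|v_q|)) \le \sin^{-1}(8\eta/(\sqrt{3}(1-\eta)\ell))$. Consequently $\angle q_A x_A r_A$ differs from $\angle(v_q,v_r) \ge 180^\circ - \theta$ by at most $2\sin^{-1}(8\eta/(\sqrt{3}(1-\eta)\ell))$, which is at most $\theta$ for $\eta$ chosen sufficiently small in terms of $\theta, \ell$, yielding $\angle q_A x_A r_A \ge 180^\circ - 2\theta$. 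The main obstacle is not conceptual but purely bookkeeping: threading $\eta$ small enough in terms of both $\theta$ and the potentially tiny $\ell(\theta)$, and then shrinking $d_\tau$ accordingly so that the scaling inclusion of \Cref{etascaleinside} holds at level $\eta$.
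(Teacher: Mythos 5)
Your argument is correct and follows essentially the same route as the paper: rescale the witnesses by $(1-\eta)$ to land in $\co(A)$, bound the discrepancy between $x_A$ and (a scaled copy of) $x_{D_t}$ via the $(1-\eta)K \subset \co(A), \co(D_t) \subset K$ inclusions, and control the resulting rotation of the arms with \Cref{arcsinlem}. The only cosmetic difference is that you factor the perturbation through the intermediate point $(1-\eta)x_{D_t}$, moving only one endpoint of each arm by the vector $e$, whereas the paper compares the original segments $[x_{D_t}, y]$, $[x_{D_t}, z]$ directly with $[x_A, y']$, $[x_A, z']$ (both endpoints moved); the constants come out slightly differently but the conclusion is the same.
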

\begin{proof}
We prove the statement for $x_A$, the statement for $x_B$ is proved identically.
Let $\eta = \frac{\sqrt{3}\ell}{8}\sin(\theta/2)$ (recall $\ell$ is defined to be a function of $\theta$), and take $d_\tau$  sufficiently small so that $(1-\eta)K\subset \co(A),\co(B),\co(D_t)\subset K$ by \Cref{etascaleinside}.
Let $y,z$ be the other two points in $\co(D_t)$ realizing $x_{D_t}$ as $(\theta,\ell)$-good. Because $(1-\eta)K \subset \co(A),\co(D_t) \subset K$, we have $|x_{D_t}x_A| \le \eta \frac{2}{\sqrt{3}}$. Defining $y'=(1-\eta)y \in \co(A)$ and $z'=(1-\eta)z \in \co(B)$ we have $|yy'|,|zz'| \le \eta \frac{2}{\sqrt{3}}$. Thus by \Cref{arcsinlem}, as $\sin^{-1}(\frac{4\eta}{\sqrt{3}\ell}) < \theta/2$  we have $\angle y'x_Az' \ge 180^{\circ}-2\theta$. As $|x_{D_t}x_A|+|yy'|\leq\frac{4\eta}{\sqrt{3}} < \frac{\ell}{2}$, by the triangle inequality $|x_Ay'| \ge \frac{\ell}{2}$. Similarly $|x_Az'| \ge \frac{\ell}{2}$, so we see that $y',z'$ realize $x_A$ as $(2\theta,\frac{\ell}{2})$-good.
\end{proof}

\begin{lem} \label{quadrilateral}
Let $m, n$ be two points and let $l_m^1,l_m^2$ and $l_n^1,l_n^2$ be pairs of rays originating at $m,n$, respectively and label $u,v,x,y$ as shown in the figure. Assume further that $\angle unv = \angle ymu\ge 28^{\circ} $. Denote $\angle num = \theta$ and $|mn| = r$. \begin{center}
\begin{tikzpicture}[scale=1.3]
\coordinate (x) at (-3,0);
\coordinate (y) at (3,0);
\coordinate (p) at (0,0);
\coordinate (ga) at (2,0.5);
\coordinate (oa) at (2,2);
\coordinate (p') at (0.1,0.3);
\coordinate (ob) at (-1,2);
\coordinate (gb) at (-2,0.5);
\coordinate (pr) at ($(p)!2!(p')$);
\coordinate (oar) at ($(oa)!2!(p')$);
\coordinate (gar) at ($(ga)!2!(p')$);

\draw[name path=l1n] (gar) node[anchor=east] {$l^1_n$}--(pr) node[anchor=west] {$n$};
\draw[name path=l2n] (pr)--(oar) node[anchor=east] {$l^2_n$};
\draw[name path=l2m] (p) node[anchor=north] {$m$}--(ob) node[anchor=south] {$l^2_m$};
\draw[name path=l1m] (p)--(gb) node[anchor=south] {$l^1_m$};

\path [name intersections={of = l1m and l1n}];
  \coordinate (u')  at (intersection-1);
  
  \path [name intersections={of = l1m and l2n}];
  \coordinate (v')  at (intersection-1);
  
    \path [name intersections={of = l2m and l2n}];
  \coordinate (x')  at (intersection-1);
  
      \path [name intersections={of = l2m and l1n}];
  \coordinate (y')  at (intersection-1);
  
\draw (u') node[anchor=south] {$u$};
\filldraw (u') circle (1pt);
\draw (v') node[anchor=north] {$v$};
\filldraw (v') circle (1pt);
\draw (x') node[anchor=east] {$x$};
\filldraw (x') circle (1pt);
\draw (y') node[anchor=south] {$y$};
\filldraw (y') circle (1pt);
\draw[dotted] (p)-- node[midway,right] {$r$} (pr);
\draw pic[draw,angle radius=0.8cm, "$\theta$"] {angle=p--u'--pr};
\draw pic[draw, angle radius=1.9cm, "$\ge 28^{\circ}$" shift={(-9mm,6mm)}] {angle=ob--p--gb};

\draw pic[draw, angle radius=2.2cm, "$\ge 28^{\circ}$" shift={(-11mm,-7mm)}] {angle=gar--pr--oar};
\end{tikzpicture}
\end{center}
Then we have the area lower bound $|uvxy| \ge \frac{1}{2} r^2 \sin(28^{\circ} )^6/ \sin (\theta).$
\end{lem}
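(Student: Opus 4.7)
The plan is to triangulate the quadrilateral as a set difference of two nested triangles sharing the apex $m$, and then evaluate the relevant side lengths via the law of sines. In the configuration indicated by the figure, $u$ lies between $m$ and $v$ on the ray $l_m^1$, and $y$ lies between $m$ and $x$ on $l_m^2$, so that $\triangle muy$ is contained inside $\triangle mvx$ with set difference precisely $uvxy$. Since both triangles share the apex angle $\angle ymu$ at $m$, this gives
\[
|uvxy| = \tfrac{1}{2}\sin(\angle ymu)\bigl(|mv|\cdot|mx| - |mu|\cdot|my|\bigr),
\]
and $\sin(\angle ymu)\ge \sin(28^\circ)$ by hypothesis immediately extracts one factor. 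Symmetrically, the same decomposition with apex $n$ gives an analogous identity involving $\sin(\angle unv)\ge \sin(28^\circ)$, which will be useful when combining bounds.

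Next, apply the law of sines in each of the four auxiliary triangles $\triangle mun, \triangle mvn, \triangle myn, \triangle mxn$, each sharing $mn$ of length $r$. These yield closed-form expressions of the type $|mu| = r\sin(\angle mnu)/\sin\theta$ and similarly for $|mv|, |my|, |mx|$ (with the denominators being sines of the apex angles at $u,v,y,x$ respectively, each determined by angle-sum in the corresponding triangle). In particular, the $\sin\theta$ denominator in $|mu|$ is exactly what produces the $1/\sin\theta$ in the final bound.

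Substituting the law of sines expressions into the bracketed difference $|mv|\cdot|mx| - |mu|\cdot|my|$ and simplifying (for example, using the identity $\tan\beta - \tan\gamma = \sin(\beta-\gamma)/(\cos\beta\cos\gamma)$ on ratios of the relevant sines), one extracts additional factors of sines of angles bounded below by $28^\circ$. Some come directly from the two opening angles $\angle ymu, \angle unv$ combined via the symmetric decomposition at $n$; others come from sub-angles like $\angle mnu, \angle mnv, \angle nmu, \angle nmv$, which are bounded below by $28^\circ$ using the hypothesis together with the non-degeneracy guaranteed when the lemma is actually applied (the ambient configuration from \Cref{guarenteelem} forces the relevant angles into a range bounded away from the degenerate cases via \Cref{tangentangle}, which ensures supporting-line angles lie in $(29^\circ, 180^\circ-29^\circ)$). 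Accumulating six such factors produces the numerator $\sin^6(28^\circ)$, and the denominator reduces to $\sin\theta$ together with bounded cosines, establishing the bound $|uvxy|\ge \tfrac{1}{2}r^2\sin^6(28^\circ)/\sin\theta$.

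The main obstacle is the trigonometric bookkeeping to verify that six separate factors of $\sin(28^\circ)$ can indeed be extracted and that the cosine factors appearing in the denominator are bounded above by $1$ in the relevant regime; but this is a matter of direct calculation using the hypothesis $\angle ymu=\angle unv\ge 28^\circ$ and angle-chasing in the four sub-triangles, rather than any substantively new idea.
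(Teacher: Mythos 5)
Your decomposition of $uvxy$ as a set difference of two triangles sharing the apex $m$ is correct in spirit, but you have the nesting backwards: looking at the figure, $v$ lies between $m$ and $u$ on $l_m^1$, and $x$ lies between $m$ and $y$ on $l_m^2$, so the small triangle is $\triangle mvx$ and the big one is $\triangle muy$. The exact formula is therefore
\[
|uvxy| = \tfrac{1}{2}\sin(\angle ymu)\bigl(|mu|\cdot|my| - |mv|\cdot|mx|\bigr),
\]
whereas you wrote the negative of this. That is a fixable labeling slip, but it points to a more serious issue.

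The genuine gap is that you have converted the problem into bounding a \emph{difference} of two products of lengths from below, and then assert that ``trigonometric bookkeeping'' extracts six factors of $\sin(28^\circ)$ from it. This is not a matter of substituting the law of sines and simplifying. After substituting, the bracketed difference takes the form
$r^2\bigl[\sin^2\beta / (\sin\theta\sin(\theta+\phi)) - \sin^2(\beta-\phi)/(\sin(\theta+\phi)\sin(\theta+2\phi))\bigr]$, where $\beta=\angle unm$ and $\phi=\angle unv$, and one must show this subtraction does not suffer from near-cancellation. Nothing in the law-of-sines expressions is in tangent form, so the identity $\tan\beta - \tan\gamma = \sin(\beta-\gamma)/(\cos\beta\cos\gamma)$ does not obviously apply; you would have to do a genuine case analysis on where $\beta$, $\theta$, $\phi$ sit relative to $90^\circ$. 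The paper avoids the difference altogether: it observes $|uvxy|\ge|uvy|$ (a sub-triangle, so a lower bound rather than an exact identity) and then uses the multiplicative factorization $|uvy| = |umn|\cdot\frac{|uv|}{|um|}\cdot\frac{|uy|}{|un|}$, in which each of the three factors can be bounded independently from below: $|umn|\ge \tfrac12 r^2\sin^2(28^\circ)/\sin\theta$ from the law of sines and angle-sum, and each ratio is at least $\sin^2(28^\circ)$ after expanding with the law of sines and discarding sines in the denominator. A product decomposition is what makes the factor-by-factor extraction trivial; your difference decomposition does not have this property, and you have left exactly the hard step unexamined.

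Finally, you invoke \Cref{tangentangle} and the ambient configuration from \Cref{guarenteelem} to justify that certain sub-angles are $\ge 28^\circ$. The lemma should stand alone. The needed facts $\angle umn,\angle unm\ge 28^\circ$ already follow from the hypotheses together with the configuration ``as shown in the figure'' (the ray $l_m^2$ lies angularly between $mn$ and $l_m^1$ at $m$, since $y$ sits on segment $nu$; hence $\angle umn\ge \angle ymu\ge 28^\circ$, and symmetrically at $n$). Appealing to where the lemma will later be applied is not a valid substitute for deriving these from the figure's configuration.
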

\begin{proof}

First, we note that $$|uvxy| \ge |uvy| = |umn|\cdot \frac{|uv|}{|um|}\cdot \frac{|uy|}{|un|}.$$ By the law of sines, we have $|um|= r \sin(\angle unm)/ \sin(\theta)$ and $|un| = r \sin(\angle umn)/ \sin (\theta)$. We have $\angle unm,\angle umn \ge 28^{\circ}$, so as the sum of the angles of the triangle $umn$ is $180^{\circ}$, we have $\angle unm,\angle umn \in [28^{\circ},180^{\circ}-28^{\circ}]$. Therefore \begin{align*}|umn|&=\frac{1}{2}|um||un| \sin(\theta) = \frac{1}{2} r^2 \sin(\angle unm) \sin (\angle umn) / \sin(\theta) \\ &\ge \frac{1}{2} r^2\sin(28)^2 / \sin(\theta).\end{align*}
Next, we have \begin{align*}
\frac{|uv|}{|um|}= \frac{|unv|}{|unm|}= \frac{|nv|}{|nm|} \frac{\sin(\angle unv)}{\sin(\angle unm)}= \frac{\sin(\angle umn)\sin(\angle unv)}{\sin (\angle nvm)\sin(\angle unm)} \ge \sin (\angle umn) \sin ( \angle unv ) \ge \sin(28^{\circ})^2,
\end{align*}
and by a symmetric argument we have $\frac{|uy|}{|un|} \ge \sin(28^{\circ})^2$.
Multiplying the bounds, we obtain $|uvxy| \ge \frac{1}{2} r^2 \sin(28^{\circ})^6/ \sin (\theta)$ as desired. \end{proof}

\begin{proof}[Proof of \Cref{guarenteelem}]
We choose parameters as follows.
\begin{itemize}
    \item $\theta \le \frac{1}{2}^{\circ}$ such that $\frac{1}{2}\xi^2\sin(28^{\circ})^6/\sin(4\theta) \ge 1$ and $\ell=\ell(\theta)\le \frac{1}{2}$.
    \item Next, take $\eta=\frac{\sqrt{3}}{8}\ell\sin(\theta)$ (note with this choice of $\eta$ we have $(1-\eta)\frac{1}{\sqrt{12}} \ge \frac{1}{2}\ell$).
    \item Next, take $\gamma_0$ such that $5t^{-2}\sqrt{\gamma_0}\le \frac{\ell}{20}\sin(4\theta)$.
    \item Finally, take $d=d_{\tau}$ sufficiently small so that
    \begin{itemize}
        \item $\gamma \le \gamma_0$ by \Cref{gammato0}
        \item  $(1-\eta)K\subset \co(A),\co(B),\co(D_t)\subset K$ by \Cref{etascaleinside},
        \item $p' \in D_t$ if $|pp'| \ge 5t^{-1}\sqrt{\gamma_0}$ by \Cref{4rootg}
        \item \Cref{tangentangle} and \Cref{switchcor} apply.
    \end{itemize}
\end{itemize}
By our choice of $d_\tau$ we may assume that $|pp'|\in[\xi \sqrt{\gamma},5t^{-1}\sqrt{\gamma}]$. Write $p= tx_A+(1-t)y_B$, with $x_A\in \partial \co(A), y_B \in \partial \co(B)$. Construct
\begin{align*}
&A^{+} = A + \vec{x_Ap} &B^{-} = B+ \vec{y_Bp}\\
&o^{+}=o+\vec{x_Ap} &o^{-}=o+\vec{y_Bp}.
\end{align*}

\begin{center}
\begin{tikzpicture}[scale=1.2]
\draw[dotted] (x) node[anchor=east] {$x_A$}--(p) node[anchor=north] {$p$}--(y) node[anchor=west] {$y_B$};
\draw ($(x)+(0.5,1)$)--(x)--($(x)+(1,0.5)$);
\draw ($(y)+(-0.5,1)$)--(y)--($(y)+(-1,0.25)$);
\draw[dotted] (x)--(0,3) node[anchor=south] {$o$}--(y);
\draw ($(p)+(0.5,1)$)--(p)--($(p)+(1,0.5)$);
\draw ($(p)+(-0.5,1)$)--(p)--($(p)+(-1,0.25)$);
\draw[dotted] (-3,3) node[anchor=south] {$o^-$}--(3,3) node [anchor=south] {$o^+$};
\draw[dotted] (-3,3)--(p)--(3,3);
\draw ($(x)!0.3!(0,3)$) node {$\co(A)$};
\draw ($(y)!0.3!(0,3)$) node {$\co(B)$};
\draw ($(p)!0.3!(-3,3)$) node {$\co(B^-)$};
\draw ($(p)!0.3!(3,3)$) node {$\co(A^+)$};
\draw ($(p)!0.1!(0,3)$) node[anchor=south] {$p'$} -- (p);
\draw[dashdotted] (-4,-0.25)--(4,0.25) node [anchor=south] {$l$};
\end{tikzpicture}
\end{center}

Note that $o=to^{+}+(1-t)o^{-}$ and hence $p'$ is a point in triangle $ o^{+}po^{-}$ such that $|pp'|\in [\xi \sqrt{\gamma},5t^{-1}\sqrt{\gamma}]$. It is enough to show that for any such $p'$ we have $p' \in tA^{+}+(1-t)B^{-}$.

Because $p \in \partial \co(D_t)$, there is a supporting line $l$ at $p$ to $\co(D_t)$, and because $\co(D_t)$ is the Minkowski semisum $t\co(A)+(1-t)\co(B)$, this line also leaves $\co(A^{+}), \co(B^{-})$ on this same side as well. By Corollary \ref{tangentangle} we have that $\angle l,po^{+}, \angle l,po^{-} \in (29^{\circ}, 180^{\circ}-29^{\circ})$.

Our goal will be to produce points $g^+\in \co(A^+),g^-\in \co(B^-)$ with $|g^+p|,|g^-p| \ge \frac{\ell}{10}$, fitting into the following diagram
\begin{center}
\begin{tikzpicture}[scale=1.4]
\coordinate (x) at (-3,0);
\coordinate (y) at (3,0);
\coordinate (p) at (0,0);
\coordinate (ga) at (2,0.5);
\coordinate (oa) at (2,2);
\coordinate (p') at (0.1,0.3);
\coordinate (ob) at (-1,2);
\coordinate (gb) at (-2,0.5);
\draw (p) node[anchor=north] {$p$}--(ga) node[anchor=west] {$g^+$};
\draw (p)--(oa) node[anchor=south] {$o^+$};
\draw (p)--(p') node[anchor=south] {$p'$};
\draw (p)--(ob) node[anchor=south] {$o^-$};
\draw (p)--(gb) node[anchor=south] {$g^-$};
\draw[dashdotted] (x)--(y) node[anchor=south]{$l$};
\draw pic[draw, angle radius=1.8cm, "$2\theta$"] {angle=gb--p--x};
\draw pic[draw, angle radius=1.8cm, "$2\theta$"] {angle=y--p--ga};
\draw pic[draw, angle radius=2.0cm, "$\ge 28^{\circ}$"] {angle=ga--p--oa};
\draw pic[draw, angle radius=2.0cm, "$\ge 28^{\circ}$"] {angle=ob--p--gb};
\end{tikzpicture}
\end{center}
where the horizontal line is $l$, the points appear counterclockwise in the order $g^+,o^+,p',o^-,g^-$, and furthermore that $pg^+$ is rotated $2\theta$ counterclockwise from $\ell$ about $p$, $pg^-$ is rotated $2\theta$ clockwise from $\ell$ about $p$, and $\angle g^-po^-, \angle g^+po^+ \ge 28^{\circ}$.

\begin{claim}
\label{intermclaim}
If such points $g^+,g^-$ exist then $p'\in D_t$.
\end{claim}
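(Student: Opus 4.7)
The plan is to apply the averaging argument (\Cref{averagingargument}) to the translated sets $A^+$ and $B^-$ at $z=p'$. A preliminary check shows $t(p-x_A)+(1-t)(p-y_B)=p-(tx_A+(1-t)y_B)=0$, so $tA^++(1-t)B^-=D_t$; also $|\co(A^+)\setminus A^+|=|\co(A)\setminus A|$ and likewise for $B$, so the relevant quantity $\gamma$ is unchanged by translation. Hence it suffices to exhibit sets $R_{A^+}\subset\co(A^+)$ and $R_{B^-}\subset\co(B^-)$ with $|R_{A^+}\cap H(R_{B^-})|>t^{-2}\gamma$, where $H=H_{-\frac{1-t}{t},p'}$.

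I take $R_{A^+}$ to be the triangle with vertices $p,g^+,o^+$ and $R_{B^-}$ to be the triangle with vertices $p,g^-,o^-$; convexity of $\co(A^+),\co(B^-)$, together with the inclusions $p,g^\pm,o^\pm$ in the respective convex hulls (using that $o\in\co(A)\cap\co(B)$, whose translates give $o^\pm$), delivers the containments. A direct calculation shows that $H$ sends the ray from $p$ in direction $v$ to the ray from $n:=H(p)$ in direction $-v$, so $H(R_{B^-})$ is the triangle at $n$ whose edges $nH(g^-),nH(o^-)$ are anti-parallel to $pg^-,po^-$; moreover $|pn|=|pp'|/t\ge\xi\sqrt{\gamma}/t$.

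The heart of the argument is an application of \Cref{quadrilateral} with $m=p$, $n=H(p)$, pairing the rays appropriately. The two ``$\ge 28^\circ$'' angular hypotheses hold because $\angle g^+po^+,\angle g^-po^-\ge 28^\circ$ by the construction of the diagram (the latter equals $\angle H(g^-)nH(o^-)$ since $H$ preserves angles). Pairing $l_m^1=pg^+$ (at angle $2\theta$ above the tangent line $\ell$ at $p$) with $l_n^1=nH(g^-)$ (anti-parallel to $pg^-$, hence at angle $2\theta$ below $\ell$ on the same side), these two rays are nearly parallel with angle $4\theta$ between them; their intersection vertex $u$ therefore satisfies $\angle num=4\theta$. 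The lemma yields
\[
|R_{A^+}\cap H(R_{B^-})|\ge|uvxy|\ge \tfrac{1}{2}(\xi\sqrt{\gamma}/t)^2\sin(28^\circ)^6/\sin(4\theta)\ge t^{-2}\gamma
\]
by the parameter choice $\tfrac{1}{2}\xi^2\sin(28^\circ)^6/\sin(4\theta)\ge 1$, with a touch of slack (or strictness in $|pp'|$) accounting for the strict inequality required by \Cref{averagingargument}.

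The main obstacle is verifying that the quadrilateral $uvxy$ lies inside the bounded triangles $R_{A^+}$ and $H(R_{B^-})$, not merely inside the infinite wedges they span. For this I use that the farthest vertex $u$ from $p$ satisfies $|pu|=|pp'|/(2t\sin(2\theta))=O(\sqrt{\gamma}/t)$, vanishing as $d_\tau\to 0$, while $|pg^\pm|\ge\ell/10$ and $|po^\pm|\ge 1/\sqrt{12}$ (by \Cref{lengthsandareas}) are bounded below by fixed positive constants. For $d_\tau$ sufficiently small the quadrilateral fits inside both triangles, and \Cref{averagingargument} then delivers $p'\in D_t$.
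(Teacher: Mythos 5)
Your proof is essentially the same as the paper's. You apply the same quadrilateral lemma (\Cref{quadrilateral}) with the same angle $4\theta$ and the same conclusion via the averaging argument; the only cosmetic difference is that you work with $H(R_{B^-})\cap R_{A^+}$ (using $H$) rather than $H^{-1}(S^+)\cap S^-$ (using $H^{-1}$), which is the equivalent dual form already built into \Cref{averagingargument}, and you bound the relevant quantity by $t^{-2}\gamma$ rather than $(1-t)^{-2}\gamma$ accordingly. Your containment verification is a little compressed — the exact claim $|pu|=|pp'|/(2t\sin(2\theta))$ is at best an approximation (the correct law-of-sines bound is $|pu|\le |pn|/\sin(4\theta)=\frac{|pp'|}{t\sin(4\theta)}$, and one should really check all four vertices $u,v,x,y$ against both triangles, not just the ``farthest'' one), but the governing idea — that all quadrilateral vertices are within $O(\sqrt{\gamma})$ of $p$ and $n$ while the triangle sides have length bounded below by $\ell/10$ — is exactly the paper's argument, and your observation that $tA^++(1-t)B^-=D_t$ and $\gamma$ is translation-invariant correctly supplies what the paper calls ``a suitable modification of \Cref{averagingargument}.''
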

\begin{proof}
Note that $|o^+p|=|ox_A|\ge (1-\eta)\frac{1}{\sqrt{12}}\ge\frac{\ell}{2}>\frac{\ell}{10}$ by \Cref{lengthsandareas}, and similarly $|o^-p| \ge \frac{\ell}{10}$. Furthermore, $|pp'| \le 5t^{-1}\sqrt{\gamma_0}\le \frac{\ell}{20}\sin(4\theta)$.

Let $S^-$ denote the triangle $g^-po^-$ and $S^+$ denote the triangle $g^+po^+$. Let $H$ denote the negative homothety $H=H_{p',-\frac{1-t}{t}}$ of ratio $-\frac{1-t}{t}$ at $p'$. Note that the inverse homothety $H^{-1}$ is a negative homothety with ratio $-\frac{t}{1-t}$ about $p'$.

First, we show that $$|H^{-1}(S^+)\cap S^-| \ge \frac{1}{2(1-t)^2}|pp'|^2\sin(28^{\circ})^6/\sin(4\theta).$$

This will be seen to follow from \Cref{quadrilateral}, applied with angle $4\theta$, $m=p$, $n=H^{-1}(p)$, $l_m^1=pg^-$, $l_m^2=po^-$, $l_n^1=H^{-1}(pg^+)$ and $l_n^2=H^{-1}(po^+)$. Let $u,v,x$ and $y$ be defined as in \Cref{quadrilateral} such that $\angle num=4\theta$. 

In order to apply \Cref{quadrilateral}, we need to check that the intersection of the triangles $H^{-1}(S^+)$ and $S^-$ contains the quadrilateral $uvxy$.

\begin{center}
\begin{tikzpicture}[scale=2]
\coordinate (x) at (-3,0);
\coordinate (y) at (3,0);
\coordinate (p) at (0,0);
\coordinate (ga) at (2,0.5);
\coordinate (oa) at (2,2);
\coordinate (p') at (0.1,0.3);
\coordinate (ob) at (-1,2);
\coordinate (gb) at (-2,0.5);
\coordinate (pr) at ($(p)!2!(p')$);
\coordinate (oar) at ($(oa)!2!(p')$);
\coordinate (gar) at ($(ga)!2!(p')$);

\draw[name path=l1n] (gar) node[label={[xshift=-0.8cm, yshift=-0.3cm]$H^{-1}(g^+)$}]{}--(pr) node[anchor=west] {$H^{-1}(p)=n$};
\draw[name path=l2n] (pr)--(oar) node[anchor=east] {$H^{-1}(o^+)$};
\draw[name path=l2m] (p) node[anchor=north west] {$p=m$}--(ob) node[anchor=south] {$o^-$};
\draw[name path=l1m] (p)--(gb) node[anchor=south east] {$g^-$};

\filldraw[color=black, opacity=0.1] (p)--(gb)--(ob)--cycle;
\filldraw[color=black, opacity=0.1] (pr)--(gar)--(oar)--cycle;
\draw (gb)--node[midway,right] {$S^-$} (ob);
\draw (oar)--($(oar)!0.65!(gar)$) node[anchor=west] {$H^{-1}(S^+)$} --(gar);

\path [name intersections={of = l1m and l1n}];
  \coordinate (u')  at (intersection-1);
  
  \path [name intersections={of = l1m and l2n}];
  \coordinate (v')  at (intersection-1);
  
    \path [name intersections={of = l2m and l2n}];
  \coordinate (x')  at (intersection-1);
  
      \path [name intersections={of = l2m and l1n}];
  \coordinate (y')  at (intersection-1);
  
\draw (u') node[anchor=south] {$u$};
\filldraw (u') circle (1pt);
\draw (v') node[anchor=north] {$v$};
\filldraw (v') circle (1pt);
\draw (x') node[anchor=east] {$x$};
\filldraw (x') circle (1pt);
\draw (y') node[anchor=south] {$y$};
\filldraw (y') circle (1pt);
\draw[dotted] (p)--  (pr);
\draw[dashdotted] (x)--(y);

\draw pic[draw,angle radius=1.2cm,"$4\theta$"] {angle=v'--u'--y'};


\coordinate (pppp') at ($(p)!1.2!(p')$);
\filldraw (pppp') circle (1pt);
\draw (pppp') node[anchor=west]{$p'$};

\end{tikzpicture}
\end{center}

Indeed, we have that $|un|=\sin(\angle upn) \frac{|mn|}{\sin(4\theta)} \le \frac{\ell}{20}\cdot \frac{t}{1-t}$, because $|mn|=\frac{1}{1-t}|pp'|\leq \frac{5}{t(1-t)}\sqrt{\gamma_0}\le \frac{\sin(4\theta)\ell}{20}\cdot \frac{t}{1-t}$, and similarly $|up| \le \frac{\ell}{20}\cdot \frac{t}{1-t}$. Then the triangle inequality shows that $|nv|,|py|\le \frac{\ell}{10}\cdot \frac{t}{1-t}$ as well, and we conclude from the fact that $|H^{-1}(o^+p)|,|H^{-1}(o^-p)|,|g^+p|,|g^-p| \ge \frac{\ell}{10}\cdot \frac{t}{1-t}$.

Next, because $|pp'|^2 \ge \xi^2 \gamma$, by our choice of $\theta_0$ this implies that $$|H^{-1}(S^+)\cap S^-| > (1-t)^{-2}\gamma.$$
Thus as
\begin{align*}
\frac{t^2}{(1-t)^2}|pg^+o^+\setminus A^+|+|pg^-o^-\setminus B^-|&\le \frac{t^2}{(1-t)^2}|\co(A^+)\setminus A^+|+|\co(B^-)\setminus B^-| \\&=\frac{t^2}{(1-t)^2}|\co(A)\setminus A|+|\co(B)\setminus B|\\
&=(1-t)^{-2}\gamma < |H^{-1}(S^+) \cap S^-|,
\end{align*}
a suitable modification of \Cref{averagingargument} shows $p'\in tA^++(1-t)B^-$ and hence $p'\in tA+(1-t)B$.
\end{proof}

Returning to the proof of the proposition, we note that exactly as in the start of \Cref{intermclaim} we have $|po^+|,|po^-| \ge \frac{\ell}{2}$. We now distinguish two cases.

\textbf{Case 1:} $|x_Ay_B| \ge \ell$. Recall the definitions of $x_B$ and $y_A$ from \Cref{pedefn}. By \Cref{lengthsandareas}, we have that $|x_Ax_B|, |y_Ay_B| \le \eta \frac{2}{\sqrt{3}}\le \frac{\ell}{4} $ and hence by the triangle inequality $ |x_Ay_A|, |x_By_B| \ge  \frac{\ell}{2}$.
\begin{center}
\begin{tikzpicture}[scale=0.9]
\coordinate (ya') at (4,0);
\coordinate (yb') at (5,-1);
\draw (-5,-1) node [anchor=east]{$x_A$}--(0,4) node[anchor=south] {$o$}--(5,-1) node [anchor=west]{$y_B$};
\draw[name path=t1] (-4.5,-0.5) node[anchor=east] {$x_B$}--(5,-1);
\draw[name path=t2] (4,0) node [anchor=west] {$y_A$}--(-5,-1);
\draw (4.5,-0.5) node[anchor=west] {$\approx \eta$};
\draw (0,-1) node {$\approx \ell$};
\path [name intersections={of = t1 and t2}];
  \coordinate (i)  at (intersection-1);
\draw pic[draw, angle radius=4 cm, "$\le \theta$"] {angle=yb'--i--ya'};
\end{tikzpicture}
\end{center}

We also have $\angle x_Ay_A, x_By_B \le \sin^{-1}(\frac{8\eta}{\sqrt{3}\ell})\le \theta$ by \Cref{arcsinlem}.

Define $y_{A}^+:=y_A + \vec{x_Ap} \in A^+, x_{B}^-=x_B+\vec{y_Bp} \in B^-$. We have that $$|py_{A}^+|=|x_Ay_A|, \quad |px_{B}^-|=|x_By_B|,$$ and these are all $\ge \frac{\ell}{2}$ by the above discussion. Furthermore, $\angle y_{A}^+px_{B}^-=\angle x_Ay_A,y_Bx_B \ge \pi - \theta $, and the line $l$ through $p$ has $y_{A}^+, o^+, p', o^-, x_{B}^-$ on one side, appearing in this order counterclockwise above $l$. To see this, note that as $p$ lies on the segment $x_Ay_B$, $\vec{x_Ap}$ lies on the same side of the line $ox_A$ as $y_A$ does, so $o \not \in \angle y_A^+po_A^+$.  In particular, this implies that $\angle l, py_{A}^+,\angle l, px_{B}^- \le \theta$.

\begin{center}
\begin{tikzpicture}
\coordinate (x) at (-6,0);
\coordinate (y) at (6,0);
\coordinate (p) at (0,0);
\coordinate (ga) at (2,0.5);
\coordinate (oa) at (6,6);
\coordinate (p') at (0.1,0.3);
\coordinate (ob) at (-3,6);
\coordinate (gb) at (-2,0.5);
\coordinate (yaplus) at (3.5,0.5);
\coordinate (xbminus) at (-3.5,0.5);
\draw (p) node[anchor=north] {$p$}--(yaplus) node[anchor=west] {$y_A^+$};
\draw (p)--(oa) node[anchor=south] {$o^+$};
\draw (p)--(p') node[anchor=south] {$p'$};
\draw (p)--(ob) node[anchor=south] {$o^-$};
\draw (p)--(xbminus) node[anchor=east] {$x_B^-$};
\draw[dashdotted] (x)--(y) node[anchor=south]{$l$};
\draw pic[draw, angle radius=2.8cm, "$\le \theta$"] {angle=xbminus--p--x};
\draw pic[draw, angle radius=2.8cm, "$\le \theta$"] {angle=y--p--yaplus};
\draw pic[draw, angle radius=1.3cm, "$\ge 29^{\circ}$"] {angle=y--p--oa};
\draw pic[draw, angle radius=1.3cm, "$\ge 29^{\circ}$"] {angle=ob--p--x};
\draw[dotted] (oa)--(yaplus);
\draw[dotted] (ob)--(xbminus);
\filldraw ($(oa)!0.9!(yaplus)$) circle (1pt);
\filldraw ($(ob)!0.9!(xbminus)$) circle (1pt);
\draw ($(oa)!0.9!(yaplus)$) node[anchor=west]{$g^+$};
\draw ($(ob)!0.9!(xbminus)$) node[anchor=east]{$g^-$};
\end{tikzpicture}
\end{center}

Because $\angle l, po^+, \angle l, po^- \ge 29^{\circ}$ and $2\theta<29^{\circ}$, we have $\angle l, py_{A}^+\le 2\theta < \angle l,po^+$ and $\angle l, px_{B}^-\le 2\theta < \angle l,po^-$. These imply the existence of points $$g^+ \in y_{A}^+o^+ \subset \co(A^+),\text{ and }g^- \in x_{B}^-o^- \subset \co(B^-),$$ such that $\angle l, pg^+, \angle l, pg^- =2 \theta.$ Because $\angle l, py_A^+, \angle l, px_{B}^- \le \theta$ and $2\theta \le 1^{\circ}$, we have $$\angle g^+po^+, \angle g^-po^- \ge 29^{\circ}-2\theta\ge 28^{\circ}.$$ It is clear from the construction that $g^+, o^+, p', o^-, g^-$ also appear in this order counterclockwise above $l$. Finally, recall $|po^+|\ge \frac{\ell}{2}$, so by \Cref{distlinepoint} as $\angle o^+py_{A}^+ \in (28^{\circ},180^{\circ}-28^{\circ})$ we have $$|pg^+| \ge \min(|py_{A}^+|,|po^+|)\sin(14^{\circ}) \ge \frac{\ell}{10},$$ and similarly $|pg^-| \ge \frac{\ell}{10}$. 


\textbf{Case 2:} $|x_Ay_B| \le \ell$. Then $|x_Ap|,|y_Bp| \le \ell$, and we have $|x_{D_t}x_A|,|y_{D_t}y_A| \le \frac{2}{\sqrt{3}}\eta \le \frac{\ell}{4}$ by \Cref{lengthsandareas}. Thus by the triangle inequality $|x_{D_t}p|,|y_{D_t}p| \le \frac{5}{4}\ell<2\ell$. By definition of $\pgood$, since $p\in \arc{q} \in \pgood$ we have $x_{D_t},y_{D_t}$ are $(\theta,\ell)$-good. By \Cref{switchcor} we have that $x_A \in \co(A), y_B \in \co(B)$ are $(2\theta,\frac{\ell}{2})$-good. Therefore, there exists $$e_1, e_2 \in \co(A),\text{ and }f_1, f_2 \in \co(B)$$ such that $$\angle e_1x_Ae_2, \angle f_1y_Bf_2 \ge 180 - 2\theta,\text{ and }|e_1x_A|, |e_2x_A|, |f_1y_B|, |f_2y_B| \ge \frac{\ell}{2}.$$ Let \begin{align*}
    e_1^+=e_1+\vec{x_Ap},\qquad e_2^+=e_2+\vec{x_Ap}\\
    f_1^-=f_1+\vec{y_Bp},\qquad f_2^-=f_2+\vec{y_Bp}
\end{align*} such that $e_1^+,e_2^+\in \co(A^+)$ and $f_1^-,f_2^-\in \co(B^-)$.
With this notation we have that
$\angle e_1^+pe_2^+, \angle f_1^-pf_2^- \ge 180 - 2\theta$ and $|e_1^+p|, |e_2^+p|, |f_1^-p|, |f_2^-p| \ge \frac{\ell}{2}$. Recall that $\angle l, po^+, \angle l, po^- \in (29^{\circ}, 180^{\circ}-29^{\circ})$.

\begin{center}
\begin{tikzpicture}
\coordinate (x) at (-6,0);
\coordinate (y) at (6,0);
\coordinate (p) at (0,0);
\coordinate (ga) at (2,0.5);
\coordinate (oa) at (6,6);
\coordinate (p') at (0.1,0.3);
\coordinate (ob) at (-3,6);
\coordinate (gb) at (-2,0.5);
\coordinate (yaplus) at (3.5,0.5);
\coordinate (xbminus) at (-3.5,0.5);
\coordinate (e1plus) at (-4,0.25);
\coordinate (f2minus) at (4,0.75);
\draw (p) node[anchor=north] {$p$}--(yaplus) node[anchor=west] {$e_2^+$};
\draw (p)--(oa) node[anchor=south] {$o^+$};
\draw (p)--(p') node[anchor=south] {$p'$};
\draw (p)--(ob) node[anchor=south] {$o^-$};
\draw (p)--(xbminus) node[anchor=east] {$f_1^-$};
\draw[dashdotted] (x)--(y) node[anchor=south]{$l$};
\draw[dotted] (oa)--(yaplus);
\draw[dotted] (ob)--(xbminus);
\filldraw ($(oa)!0.9!(yaplus)$) circle (1pt);
\filldraw ($(ob)!0.9!(xbminus)$) circle (1pt);
\draw ($(oa)!0.9!(yaplus)$) node[anchor=west]{$g^+$};
\draw ($(ob)!0.9!(xbminus)$) node[anchor=east]{$g^-$};

\draw (p)--(e1plus);
\draw (e1plus) node[anchor=east] {$e_1^+$};

\draw (p)--(f2minus);
\draw (f2minus) node[anchor=west] {$f_2^-$};
\end{tikzpicture}
\end{center}

Notice that the line $l$ through $p$ leaves $e_1^+,e_2^+, f_1^-, f_2^- o^+, o^-, p' $ on one side, and that up to relabelling the points, $e_2^+,o^+,p',o^-,f_1^-$ appear in this order counterclockwise above $l$. Note that $\angle l,e_2^+p, \angle l, f_1^-p \le 2\theta $. Construct points \begin{align*}g^+ \in e_2^+o^+ \subset \co(A^+),\text{ and }g^- \in f_1^-o^- \subset \co(B^-)\end{align*} such that $\angle l, pg^+, \angle l, pg^- =2 \theta$ and note that $\angle g^+po^+, \angle g^-po^- \ge 28^{\circ}$ as $2\theta \le 1^{\circ}$. We can see from the construction that the points $g^+,o^+,p',o^-,g^-$ also appear in this order counterclockwise above $l$.  Finally, recall $|po^+|\ge \frac{\ell}{2}$, so by \Cref{distlinepoint} as $\angle o^+pe_2^+ \in (28^{\circ},180-28^{\circ})$, we have
$$|pg^+| \ge \min(|pe_2^+|,|po^+|)\sin(14^{\circ}) \ge \frac{\ell}{10},$$ and similarly that $|pg^-| \ge \frac{\ell}{10}$.
\end{proof}

\section{Covering $\partial \co(D_t)$ with parallelograms}
\label{paralelegramcoveringsection}
From now on, we let $\theta,\ell$ depend on $\xi \in (0,1)$ as in \Cref{guarenteelem}, and always assume that $d_{\tau}$ is sufficiently small so that \Cref{guarenteelem} holds. We will fix $\xi$ in terms of $\epsilon$, so when we say to take $d_\tau$ sufficiently small, we implicitly will take it sufficiently small in terms of our choice of $\xi$.

In this section, we construct a partition $\jj$ of $\partial \co(D_t)$ into small straight arcs $\arc{q}$, and parallelograms $R_{\arc{q}}$ which have one side on $\arc{q}$ such that $$ \co(D_t)\setminus D_t \subset \bigcup_{\arc{q}\in \jj} R_{\arc{q}}.$$

Recall that in \Cref{4rootg} we showed that for $d$ sufficiently small $D_t$ contains all points at radial distance $5t^{-1}\sqrt{\gamma}$ from $\partial \co(D_t)$. Furthermore, in \Cref{guarenteelem} we improved the bound to $\xi \sqrt{\gamma}$ for points in $\partial \co(D_t)$ that belong to arcs in $\pgood$.   

We will for the remainder of the paper be using $\goods,\bads$ exclusively for $s=2\ell,3\ell,100t^{-1}\ell$.
 Note that
$$\vbad \subset \bad \subset \hbad,$$
$$\pgood \supset \good \supset \rgood.$$
Thus \Cref{guarenteelem} also applies to points that belong to arcs in $\good$ and $\rgood$, and \Cref{partitionlem} also shows that the total angular size of arcs in $\vbad$ and $\bad$ is at most $\frac{1}{3}\alpha$.
We remark in what follows that we use
\begin{itemize}
    \item $\good \cup \bad$ to determine the heights of the $R_{\arc{q}}$, and
    \item $\rgood\cup \hbad$ to determine directions of the parallelograms $R_{\arc{q}}$.
\end{itemize}

\subsection{Definitions}
We first refine the partitions $\goods\cup \bads$ of $\partial co(D_t)$ for $s=2\ell,3\ell,100t^{-1}\ell$ into small straight segments.
\begin{defn}
Let $\jj$ be a partition of $\co(D_t)$ formed as a common refinement to all of the sets of arcs from the partitions $$\pgood\cup \vbad,\qquad \good \cup \bad,\qquad \rgood \cup \hbad$$ of $\co(D_t)$, into straight line segments of length at most $\xi\sqrt{\gamma}$. For $s\in \{2\ell,3\ell,100t^{-1}\ell\}$, define the partition $\jgoods\cup \jbads$ of $\jj$ by setting $\arc{q}\in \jgoods$ if and only if $\arc{q}\subset \arc{q}'\in \goods$.
\end{defn}

We will now in \Cref{vqdefn} choose the vectors $v_{\arc{q}}$ for $\arc{q}\in \jj$ with direction vectors $\widehat{v_{\arc{q}}}$ determined by the partition $\hbad\cup \rgood$, and with lengths determined by $\bad\cup\good$. We then in \Cref{Rqdefn} form parallelograms $R_{\arc{q}}$ with sides $\arc{q}$ and $v_{\arc{q}}$.


\begin{defn}
\label{vqdefn}
For an arc $\arc{q}\in \jj$, we define a vector $v_{\arc{q}}$ as follows.
\begin{itemize}
    \item We choose the direction vector $\widehat{v_{\arc{q}}}$ of $v_{\arc{q}}$ as follows. Let $\arc{q}\subset \arc{q}'\in \hbad \cup \rgood$. If $\arc{q}'$ is contained inside an angular interval $[m\alpha,(m+1)\alpha]$, we take the direction vector $\widehat{v_{\arc q}}$ to be the inward pointing direction at angle $(m+\frac{1}{2})\alpha$. Otherwise (recalling that $\arc{q}'\in \hbad \cup \rgood$ has angular length at most $\frac{1}{3}\alpha$) $\arc{q}'$ overlaps a unique angle $m\alpha$, and we take $\widehat{v_{\arc q}}$ to be the inward pointing vector at angle $m\alpha$.
    \item We choose the length of $v_{\arc{q}}$ to be
$$
||v_{\arc{q}}||=\begin{cases}
    15\sqrt{\gamma} &  \arc{q}\in \jbad\text{, and}\\
   3\xi \sqrt{\gamma} & \arc{q}\in \jgood.
\end{cases}
$$
\end{itemize}
For $p\in\partial \co(D_t)$, we denote $v_p=v_{\arc q}$, where $p \in \arc{q}\in\jj$.
\end{defn}
\begin{defn}
\label{Rqdefn}
For $\arc{q}\in\jj$, let $R_\arc{q}$ be a parallelogram with one side $\arc{q}$ and one side $v_\arc{q}$.
\end{defn}
By construction, the directions of each of the $v_{p}$ for $p\in \partial \co(D_t)$ are one of  $M=\frac{4\pi}{\alpha}$ directions, and the directions of the vectors are constant on arcs of $\co(D_t)$ from $\hbad \cup \rgood$. 

\begin{obs}\label{halfalpha}
For every point $p\in \partial \co(D_t)$ we have $\angle po,v_p<\frac{1}{2}\alpha$. 
\end{obs}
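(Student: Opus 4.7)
The plan is to verify the bound by directly unpacking the construction of $v_{\arc q}$ in \Cref{vqdefn}. Fix $p \in \partial \co(D_t)$ and let $\arc q \in \jj$ with $p \in \arc q$, and let $\arc q' \in \hbad \cup \rgood$ be the unique arc containing $\arc q$. The first thing I would record is that $\arc q'$ has angular length at most $\tfrac{1}{3}\alpha$: for $\arc q' \in \rgood$ this is immediate from the construction of $\goods$ (the good arcs are refined into pieces of angular size at most $\tfrac{1}{3}\alpha$), and for $\arc q' \in \hbad$ it follows from \Cref{partitionlem}, which bounds the total angular size of $\bigcup \hbad$ by $\tfrac{1}{3}\alpha$.

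Let $\phi$ denote the polar angle of $p$ viewed from the origin $o$. Then the vector $\vec{po}$ has direction $\phi + 180^\circ$, while the inward-pointing unit vector at angle $\psi$ has direction $\psi + 180^\circ$. Hence for any inward direction at angle $\psi$,
$$\angle(\vec{po}, v_p) \;=\; |\phi - \psi|,$$
and the problem reduces to bounding $|\phi - \psi|$ in each branch of \Cref{vqdefn}. Since $\arc q \subset \arc q'$, the value $\phi$ lies in the angular range of $\arc q'$.

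I would now split into the two branches. In the first branch, $\arc q' \subset [m\alpha, (m+1)\alpha]$ and $\psi = (m+\tfrac{1}{2})\alpha$. Because $\arc q'$ has angular length at most $\tfrac{1}{3}\alpha$, it is a proper subinterval of $[m\alpha, (m+1)\alpha]$; in particular, it cannot reach both endpoints, so $\phi$ stays away from at least one of $m\alpha$ and $(m+1)\alpha$ by at least $\tfrac{2}{3}\alpha$, yielding $|\phi - (m+\tfrac{1}{2})\alpha| < \tfrac{1}{2}\alpha$. In the second branch, $\arc q'$ overlaps the direction $m\alpha$ and has angular length at most $\tfrac{1}{3}\alpha$, so $\phi \in [m\alpha - \tfrac{1}{3}\alpha, m\alpha + \tfrac{1}{3}\alpha]$ and $\psi = m\alpha$, giving $|\phi - m\alpha| \le \tfrac{1}{3}\alpha < \tfrac{1}{2}\alpha$.

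I do not expect any real obstacle here; the observation is essentially bookkeeping for the definition. The only minor subtlety is the strict inequality in the first branch, where one has to invoke the fact that the angular length of $\arc q'$ is at most $\tfrac{1}{3}\alpha$, strictly less than $\alpha$, so the sub-arc cannot fill $[m\alpha, (m+1)\alpha]$ and $\phi$ cannot attain the extreme value $\pm \tfrac{1}{2}\alpha$ away from the midpoint.
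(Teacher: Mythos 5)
The paper states this as an unproved observation, so there is no paper proof to compare against; your approach — unpack \Cref{vqdefn} and bound the angular offset in each branch — is precisely the bookkeeping the authors had in mind, and your interpretation of $\angle po, v_p$ as $|\phi - \psi|$ and your branch-2 bound $|\phi - m\alpha| \le \tfrac{1}{3}\alpha < \tfrac{1}{2}\alpha$ are both correct.

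However, your branch-1 argument for the \emph{strict} inequality has a genuine logical gap. You argue: $\arc q'$ cannot reach both endpoints of $[m\alpha,(m+1)\alpha]$, so $\phi$ is at distance $\ge \tfrac{2}{3}\alpha$ from at least one of $m\alpha$, $(m+1)\alpha$, hence $|\phi - (m+\tfrac{1}{2})\alpha| < \tfrac{1}{2}\alpha$. This last step does not follow: being far from \emph{one} endpoint is perfectly compatible with sitting exactly on the \emph{other}. Concretely, if $\arc q' = [m\alpha, m\alpha + \tfrac{1}{3}\alpha]$ and $\phi = m\alpha$, then $\phi$ is at distance $\alpha \ge \tfrac{2}{3}\alpha$ from $(m+1)\alpha$, yet $|\phi - (m+\tfrac{1}{2})\alpha| = \tfrac{1}{2}\alpha$, not strictly less. (Your intermediate claim is also false in general: for $\arc q'$ centred at $(m+\tfrac{1}{2})\alpha$ and $\phi = (m+\tfrac{1}{2})\alpha$, both distances are $\tfrac{1}{2}\alpha < \tfrac{2}{3}\alpha$.) What is actually needed for strictness in branch 1 is that $\phi$ lies in the \emph{open} interval $(m\alpha, (m+1)\alpha)$, i.e.\ that $\arc q'$ touches neither endpoint. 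That in turn requires reading the dichotomy in \Cref{vqdefn} so that an arc whose endpoint coincides with a multiple of $\alpha$ is treated as ``overlapping'' $m\alpha$ and routed to branch 2 — which is also exactly the convention needed later in \Cref{directionobs}, where the paper uses the strictness of this observation to force $\widehat{v_z} = \widehat{v_p}$. With that convention made explicit, branch 1 immediately gives $\phi \in (m\alpha,(m+1)\alpha)$ and hence $|\phi - (m+\tfrac{1}{2})\alpha| < \tfrac{1}{2}\alpha$, and your branch 2 already handles the boundary-touching case correctly. So the gap is narrow but real: replace the ``far from one endpoint'' argument with the observation that branch 1 presupposes $\arc q'$ avoids both endpoints.
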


\subsection{Covering $\partial \co(D_t)$ with parallelograms}
Now are able to state the main result of this section.
\begin{prop}\label{coveringloss}
 For $d_\tau$ sufficiently small, we have $$\co(D_t)\setminus D_t \subset \bigcup_{\arc{q}\in \jj} R_{\arc{q}}.$$
\end{prop}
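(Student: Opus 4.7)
The plan is to construct, for each $z\in\co(D_t)\setminus D_t$, an arc $\arc{q}\in\jj$ whose parallelogram $R_{\arc{q}}$ contains $z$. Set $p=z_{D_t}$, so that $p\in\partial\co(D_t)$ and $z$ lies on the segment $op$. By \Cref{4rootg}, the fact that $z\notin D_t$ forces $|pz|<5t^{-1}\sqrt{\gamma}$; and if the arc $\arc{q}\in\jj$ containing $p$ lies in $\jgood$, then $\arc{q}$ is contained in a $\good$-arc and hence in a $\pgood$-arc (since $\good\subset\pgood$), so \Cref{guarenteelem} sharpens this to $|pz|<\xi\sqrt{\gamma}$. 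These two bounds match the two possible values $\|v_{\arc{q}}\|\in\{15\sqrt{\gamma},3\xi\sqrt{\gamma}\}$ of the inward height of $R_{\arc{q}}$.

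To verify the membership $z\in R_{\arc{q}}$, I would work in affine coordinates on $R_{\arc{q}}$. Writing $\arc{q}=[a,b]$ with $p=a+s(b-a)$ for some $s\in[0,1]$, and decomposing the unit radial direction as $\widehat{po}=\beta\,\widehat{b-a}+c\,\widehat{v_{\arc{q}}}$, the condition $z=p+|pz|\widehat{po}\in R_{\arc{q}}$ reduces to the pair of coordinate inequalities $s+|pz|\beta/|b-a|\in[0,1]$ (tangential) and $|pz|c/\|v_{\arc{q}}\|\in[0,1]$ (inward). By \Cref{halfalpha}, $\angle(\widehat{po},\widehat{v_{\arc{q}}})<\alpha/2$, which is very small since $\alpha=720^{\circ}/M$ with $M=1000$; and by \Cref{tangentangle}, $\angle(\widehat{po},\widehat{b-a})\in(29^{\circ},151^{\circ})$. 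Elementary trigonometry then yields $c\in(\sin 29^{\circ},1/\sin 29^{\circ})$ and $|\beta|<\sin(\alpha/2)/\sin 29^{\circ}$, from which the inward inequality follows immediately from the designed size of $\|v_{\arc{q}}\|$ relative to $|pz|$.

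The main obstacle is the tangential inequality near the endpoints of $\arc{q}$: since $|b-a|\le\xi\sqrt{\gamma}$ is of the same order as $|pz|$, the offset $|pz|\beta/|b-a|$, although tiny in absolute size, can still push $s$ outside $[0,1]$. The resolution exploits the design of $\jj$: every $\arc{r}\in\jj$ lying in a single $\hbad\cup\rgood$-arc $\arc{q}_0$ carries the same direction $\widehat{v_{\arc{r}}}$, so the union $\bigcup_{\arc{r}\subset\arc{q}_0}R_{\arc{r}}$ is the connected ``strip'' $\arc{q}_0+[0,h(\cdot)]\widehat{v}$ along $\arc{q}_0$, where the height $h(\cdot)$ is piecewise constant on the $\good$/$\bad$ sub-arcs. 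One checks that $z$ lies in this strip by projecting $z$ back along $-\widehat{v}$ until hitting $\partial\co(D_t)$ at a point $q$ within distance on the order of $|pz|\tan(\alpha/2)$ of $p$; for $p$ away from the endpoints of $\arc{q}_0$, $q$ still lies in $\arc{q}_0$ and the required height bound holds. At a corner between two $\hbad\cup\rgood$-arcs the direction $\widehat{v}$ jumps by at most $\alpha$, so the two neighboring strips overlap in a wedge whose angular width comfortably covers the radial inward segment at $p$, completing the covering.
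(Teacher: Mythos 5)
Your high-level approach — bound $|pz|$ by $5t^{-1}\sqrt\gamma$ or $\xi\sqrt\gamma$ via \Cref{4rootg}/\Cref{guarenteelem}, then project $z$ back along $-\widehat{v}$ to the boundary — matches the paper's, but the two assertions where you wave your hands are precisely where the paper does its real work, and as stated they contain genuine gaps.

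First, the claim that the ``required height bound holds'' hides a nontrivial step. When $p$ lies in an arc of $\jbad$, the backward projection $q$ of $z$ along $-\widehat{v}$ lands in a possibly different arc $\arc{r}\in\jj$, and if $\arc{r}\in\jgood$ then $\|v_{\arc{r}}\|=3\xi\sqrt\gamma$, which is far too small to absorb $|qz|$ (which can be comparable to $15\sqrt\gamma$). The paper avoids this by using two distinct radii in the good/bad decomposition: it classifies $p$ by the $2\ell$-partition $\vbad\cup\pgood$ to define the radial reach, and the parallelogram heights by the $3\ell$-partition $\jgood\cup\jbad$. Then when $p\in\vbad$, the paper shows $|pq|\le 4|pp'|\le\ell$ (for $d_\tau$ small), so $q$ is within $2\ell+\ell=3\ell$ of a $(\theta,\ell)$-bad point, forcing $\arc{r}\in\jbad$ and hence $\|v_{\arc{r}}\|=15\sqrt\gamma$. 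Your argument uses only the $3\ell$ classification and never invokes this relation between $p$ and $q$, so the height at $q$ is uncontrolled.

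Second, the ``wedge overlap'' claim at a corner between two $\hbad\cup\rgood$ arcs is not verified, and it is also not the mechanism the paper uses. The paper's \Cref{directionobs} shows something stronger and cleaner: for any $p'\in op$, the point $r'$ where the ray from $p'$ in direction $-\widehat{v_p}$ meets $\partial\co(D_t)$ automatically satisfies $\widehat{v_{r'}}=\widehat{v_p}$, \emph{whether or not $r'$ lies in the same $\hbad\cup\rgood$ arc as $p$}. This follows by letting $z^*$ be the boundary point with $z^*o$ in the direction $\widehat{v_p}$, using \Cref{halfalpha} and the $\tfrac12\alpha$-discretization of directions to conclude $\widehat{v_{z^*}}=\widehat{v_p}$, and observing $r'$ lies on the arc $pz^*$ where the direction map is constant. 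Your approach instead needs to verify that the two neighboring strips genuinely overlap (this depends on the sign of the jump in $\widehat v$, which you do not pin down), that the radial segment emanating from $p$ — not from the corner vertex — stays inside the union, and that the height of the \emph{neighboring} strip is large enough near the corner; none of these are checked. Replacing the wedge argument with the monotonicity/discretization argument of \Cref{directionobs} and adding the $2\ell$ versus $3\ell$ case analysis would close the proof.
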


We need the following observation about the unit direction vectors $\widehat{v_{\arc{q}}}$ of $v_{\arc{q}}$.

\begin{lem}\label{directionobs}
Let $p\in \partial \co (D_t) $, and $p' \in op$. Then there exists $r \in \partial \co (D_t)$, with $\widehat{v_p}=\widehat{v_{r}}$ and this is parallel to $rp'$.  
\end{lem}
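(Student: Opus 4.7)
The plan is to take $r$ as the unique point where the ray from $p'$ in direction $-\widehat{v_p}$ exits $\co(D_t)$; this exit point exists by convexity of $\co(D_t)$ together with $p'\in op\subset\co(D_t)$, and by construction $\vec{rp'}$ is parallel to $\widehat{v_p}$. Writing $\widehat{v_p}=-(\cos\beta,\sin\beta)$ for the corresponding $\beta$, and letting $\phi_p$ denote the angular coordinate of $p$ relative to $o$, the only remaining task is to verify $\widehat{v_r}=\widehat{v_p}$, i.e. that the arc $\arc{q}'_r\in\hbad\cup\rgood$ containing $r$ is assigned the same angle $\beta$ as $\arc{q}'_p$ by \Cref{vqdefn}. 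The degenerate situations $p'=p$ and $\phi_p=\beta$ yield $r=p$ and the claim trivially, so assume neither holds.

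The key geometric observation is that the angular coordinate $\phi(t):=\arg(p'-t\widehat{v_p})$ is monotone for $t\in[0,\infty)$, with $\phi(0)=\phi_p$ and $\phi(t)\to\beta$ as $t\to\infty$ (and $\phi$ identically equal to $\beta$ in the edge case $p'=o$); this is an elementary trigonometric fact, equivalent to the line supporting the ray not passing through $o$. In particular the angular coordinate $\phi_r$ of $r$ lies in the closed interval with endpoints $\phi_p$ and $\beta$.

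I would then split into the two cases of \Cref{vqdefn}. In the first case, $\arc{q}'_p\subset[m\alpha,(m+1)\alpha]$ with $\beta=(m+\tfrac{1}{2})\alpha$, so $\phi_p,\beta\in[m\alpha,(m+1)\alpha]$ forces $\phi_r$ into the same interval; combining the length bound $\tfrac{1}{3}\alpha$ on arcs of $\hbad\cup\rgood$ with disjointness of distinct arcs in the partition then shows $\arc{q}'_r$ cannot cross $m\alpha$ or $(m+1)\alpha$: if $\arc{q}'_r\neq\arc{q}'_p$, disjointness places $\arc{q}'_r$ between $\arc{q}'_p$ and angular position $\beta$, keeping $\arc{q}'_r$ uniformly bounded away from $m\alpha$ and $(m+1)\alpha$. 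Hence $\arc{q}'_r\subseteq[m\alpha,(m+1)\alpha]$ and $\widehat{v_r}=\widehat{v_p}$. In the second case, $\arc{q}'_p$ crosses the angle $m\alpha$ with $\beta=m\alpha$; since $\arc{q}'_p$ is a connected arc containing both $\phi_p$ and $m\alpha=\beta$, it contains the entire interval between them, so in particular $\phi_r\in\arc{q}'_p$, whence $\arc{q}'_r=\arc{q}'_p$ and $\widehat{v_r}=\widehat{v_p}$.

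The main obstacle is the combinatorial check in the first case, specifically ruling out that $\arc{q}'_r$ crosses a multiple of $\alpha$; this relies on carefully combining the length bound $\tfrac{1}{3}\alpha$ from \Cref{partitionlem} with disjointness of arcs in the partition, while the monotonicity step and the second case are essentially immediate once the setup is in place.
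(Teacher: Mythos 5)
Correct, and essentially the same approach as the paper. The paper introduces the auxiliary boundary point $z$ at angular position $\beta$ and uses \Cref{halfalpha} together with the fact that direction vectors lie at multiples of $\frac{1}{2}\alpha$ to get $\widehat{v_z}=\widehat{v_p}$, then notes $r$ lies on the arc $pz$; you instead unwind the two cases of \Cref{vqdefn} directly for the arc containing $r$, but both arguments rest on the same monotonicity observation placing $\phi_r$ angularly between $\phi_p$ and $\beta$.
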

\begin{proof}
Let $z$ be the unique point on $\partial \co(D_t)$ with $zo$ in the direction of $\widehat{v_p}$.
By \Cref{halfalpha}, the angle between $\widehat{v_z}$ and $zo$ (which is in the direction $\widehat{v_p}$) is strictly less than $\frac{1}{2}\alpha$. As the $\widehat{v}$ angles occur in multiples of $\frac{1}{2}\alpha$, this implies $\widehat{v_z}=\widehat{v_p}$.

Let $r$ be the unique point on $\partial co(D_t)$ with $rp'$ in the direction of $v_p$. Then $r$ lies on the arc $pz$, so $\widehat{v_p}=\widehat{v_r}$ is parallel to $rp'$.
\end{proof}

\begin{proof}[Proof of \Cref{coveringloss}]

Assume that $d_{\tau}$ is sufficiently small so that \Cref{bisectingcor} and \Cref{4rootg} are true. Given a point $p\in\partial \co(D_t)$, define the interval $$S_p(\theta,\ell;\xi)=pp'$$ where $p' \in op$ is such that
$$|pp'|=\begin{cases}5\sqrt{\gamma}&p\in\arc{q}\subset \vbad\text{, and}\\ \xi \sqrt{\gamma} & p\in\arc{q}\subset \pgood.\end{cases}$$
By \Cref{4rootg} and \Cref{guarenteelem} we have $(\co(D_t)\setminus D_t)\cap op \subset S_p(\theta,\ell,\xi)$ for all $p \in \partial co(D_t)$. Thus denoting by $$\Lambda(\theta,\ell;\xi):=\bigcup_{p\in\partial \co(D_t)} S_p(\theta,\ell;\xi),$$ we have
$$\co(D_t)\setminus D_t \subset R(\theta,\ell;\xi).$$

Fix a point $p\in \partial \co(D_t)$, and let $p'\in S_p(\theta,\ell;\xi)=op\cap \Lambda(\theta,\ell;\xi)$. It suffices to show that $$p'\in \bigcup_{\arc{q}\in \jj}R_{\arc{q}}.$$ Note that by \Cref{directionobs} there exists a point $r' \in \partial co(D_t)$ such that $r'p'$ is parallel to $\widehat{v_{r'}}=\widehat{v_{p}}$. Let $r$ be the intersection of the line extended from the segment $\arc{q}$ and the ray $p'r'$. 

\begin{center}
\begin{tikzpicture}
\draw (4,0)--(3.5,0.5)--(3.5,1)-- (4,3) --(4.5,4)--(5,4.4)--(5.5,4.5)--(6,4)--(6,2);
\draw[name path=p1] (6,2)--(4.5,0);
\draw (4.5,0)--(4,0);

\coordinate (r') at (6,1.5);
\coordinate (p) at (6,2.5);
\coordinate (o) at (5,3);
\coordinate (p') at ($(o)!0.5!(p)$);
\draw (5,3) node [anchor=south] {$o$}--(6,2.5) node[anchor=west] {$p$};
\draw[dotted](6,2)--(r') node[anchor=west] {$r$};
\draw[name path=p2] (r')--(p');
\path [name intersections={of = p1 and p2}];
  \coordinate (r)  at (intersection-1);
\draw (r) node[anchor=east] {$r'$};
\draw pic[draw, angle radius=0.2 cm, "$\ge 29^{\circ}$" shift={(6mm,-2mm)}] {angle=o--p--r'};
\draw pic[draw, angle radius=0.2 cm, "$<\frac{\alpha}{2}$" shift={(-3.5mm,-2mm)}] {angle=r'--p'--p};

\draw (p') node[anchor=south] {$p'$};

\draw (4.375,1) node {$\co(D_t)$};

\filldraw (5,3) circle (1pt);
\end{tikzpicture}
\end{center}

Note that $\angle rpp'\in (29^{\circ},180^{\circ}-29^{\circ})$ by \Cref{bisectingcor}, and $\angle pp'r<\frac{1}{2}\alpha$ by \Cref{halfalpha}, so $\angle prp'\in (29^{\circ}-\frac{1}{2}\alpha,180^{\circ}-29^{\circ})$. Thus by the law of sines $|r'p'|\le |rp'|=\frac{\sin(\angle rpp')}{\sin(\angle prp')}|pp'|\leq 3 |pp'|$. 

If $\arc{q}\in\jpgood$, then $|pp'|\leq \xi\sqrt{\gamma}$, so $|r'p'|\leq 3\xi\sqrt{\gamma}$, and letting $r'\in\arc{r}\in \jj$ we have $p'\in R_{\arc{r}}\subset \bigcup_{\arc{q}\in\jj}R_\arc{q}$.

Alternatively if $\arc{q}\in\jvbad$ then $|pp'| \leq 5 \sqrt{\gamma}$. Note that $|pr'|\leq |pp'|+|rp'|\leq 4 |pp'|\leq \ell$, so $r'$ is in an arc $\arc{r}\in\jbad$. Hence, $|r'p'|\leq 15 \sqrt{\gamma}$, implying $p'\in R_{\arc{r}}\subset\bigcup_{\arc{q}\in\jj}R_\arc{q}$.

\end{proof}

\section{Preimages of the $R_{\arc{q}}$ associated to $A$ and $B$.}
\label{preimagesABsection}
By \Cref{coveringloss}, for $d_{\tau}$ sufficiently small we have
$$\co(D_t)\setminus D_t \subset \bigcup_{\arc{q}\in \jj} (R_{\arc{q}}\setminus D_t).$$
The boundary of $\co(D_t)$ is composed of translates of edges from $\partial \co(A)$ scaled by a factor of $t$ and of edges from $\partial \co(B)$ scaled by a factor of $(1-t)$.
\begin{defn}
Let $\jj=\mathcal{A}\sqcup \mathcal{B}$ be the partition defined as follows. For every arc $\arc{q}\in \jj$ (which is straight by construction), we let $\arc{q} \in \mathcal{A}$ if $\arc{q}$ is on a translated $t$-scaled edge from $\partial \co(A)$, and we let $\arc{q}\in \mathcal{B}$ if $\arc{q}$ is on a translated $(1-t)$-scaled edge from $\partial \co(B)$.
\end{defn}

\begin{defn}
For $\arc{q}\in \mathcal{A}$, let $p_{\arc{q},B}\in \partial \co(B)$ and $R_{\arc{q},A}\subset \mathbb{R}^2$ be the parallelogram with edge $\arc{q}_A\subset \partial \co(A)$ such that $$R_{\arc{q}}=tR_{\arc{q},A}+(1-t)p_{\arc{q},B}.$$ Similarly, for $\arc{q}\in \mathcal{B}$, let $p_{\arc{q},A}\in \partial \co(A)$ and $R_{\arc{q},B} \subset \mathbb{R}^2$ be the parallelogram with edge $\arc{q}_B\subset \partial \co(B)$ such that
$$R_{\arc{q}}=tp_{\arc{q},A}+(1-t)R_{\arc{q},B}.$$
\end{defn}
\begin{rmk}
The parallelogram $R_{\arc{q},A}$ (resp. $R_{\arc{q},B}$) may not be entirely contained inside $\co(A)$ (resp. $\co(B)$), and the various $R_{\arc{q},A}$ with $\arc{q} \in \mathcal{A}$ (respectively $R_{\arc{q},B}$ with $\arc{q}\in \mathcal{B}$) may not be disjoint.
\end{rmk}
\begin{prop}\label{containedloss} For $d_{\tau}$ sufficiently small, we have
$$\left|\co(D_t)\setminus D_t\right|\leq t^2\sum_{\arc{q}\in \mathcal{A}}|R_{\arc{q},A}\setminus A|+(1-t)^2\sum_{\arc{q}\in \mathcal{B}}|R_{\arc{q},B}\setminus B|$$
\end{prop}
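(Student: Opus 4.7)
The plan is straightforward given the groundwork already laid by \Cref{coveringloss} and the definitions of $R_{\arc{q},A}$, $R_{\arc{q},B}$, $p_{\arc{q},A}$, $p_{\arc{q},B}$. First I would apply \Cref{coveringloss} to reduce to bounding $\sum_{\arc{q}\in \jj}|R_{\arc{q}} \setminus D_t|$ by subadditivity of measure. The task then reduces to giving a clean pointwise bound on each summand.

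The key observation to establish is that $p_{\arc{q},B}\in B$ for every $\arc{q}\in \mathcal{A}$ (and symmetrically $p_{\arc{q},A}\in A$ for every $\arc{q}\in \mathcal{B}$). This uses the structural fact that each edge of $\partial \co(D_t)=t\partial\co(A)+(1-t)\partial\co(B)$ is the Minkowski sum of a $t$-scaled edge of $\partial \co(A)$ with a single vertex of $(1-t)\partial\co(B)$, or vice versa, matched by parallel supporting lines. Since $\arc{q}\in\mathcal{A}$ lies on a $t$-scaled translated edge of $\partial \co(A)$, the associated $p_{\arc{q},B}$ is a vertex of $\partial \co(B)$. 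By the polygonal simplification recalled in \Cref{Setup}, every vertex of $\partial \co(B)$ belongs to $B$, so $p_{\arc{q},B}\in B$.

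With this in hand, for $\arc{q}\in \mathcal{A}$ every point $z\in R_{\arc{q}}$ may be written $z=tx+(1-t)p_{\arc{q},B}$ with $x\in R_{\arc{q},A}$. If in addition $x\in A$, then $z\in tA+(1-t)B=D_t$. Contrapositively, if $z\in R_{\arc{q}}\setminus D_t$ then necessarily $x\in R_{\arc{q},A}\setminus A$, so
\[
R_{\arc{q}}\setminus D_t \;\subset\; t(R_{\arc{q},A}\setminus A)+(1-t)p_{\arc{q},B},
\]
which has Lebesgue measure $t^2|R_{\arc{q},A}\setminus A|$. The identical argument with the roles of $A$ and $B$ swapped handles $\arc{q}\in\mathcal{B}$, yielding $|R_{\arc{q}}\setminus D_t|\le (1-t)^2|R_{\arc{q},B}\setminus B|$.

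Summing these bounds over $\arc{q}\in\mathcal{A}\sqcup\mathcal{B}=\jj$ and combining with the containment from \Cref{coveringloss} gives the claim. There is no real obstacle here: the only thing to be careful about is the matching of edges in the Minkowski sum and the use of the polygonal reduction to guarantee $p_{\arc{q},B}\in B$ and $p_{\arc{q},A}\in A$; the rest is a one-line computation using the fact that a Minkowski sum with a single point is an isometry composed with the scaling $t$ (or $1-t$), so areas transform by $t^2$ (or $(1-t)^2$).
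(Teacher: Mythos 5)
Your proof is correct and takes essentially the same route as the paper's: apply \Cref{coveringloss}, then bound each $|R_{\arc{q}}\setminus D_t|$ by $|R_{\arc{q}}\setminus(tA+(1-t)p_{\arc{q},B})|=t^2|R_{\arc{q},A}\setminus A|$ (and symmetrically for $\mathcal{B}$), using the containment $tA+(1-t)p_{\arc{q},B}\subset D_t$. You make explicit the underlying fact $p_{\arc{q},B}\in B$ via the polygonal simplification (vertices of $\partial\co(B)$ lie in $B$), which the paper leaves implicit, but the argument is the same.
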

\begin{proof}
Assume $d_{\tau}$ is sufficiently small that \Cref{coveringloss} holds. Then we have
$$\co(D_t)\setminus D_t\subset \bigcup_{\arc{q}\in \jj}(R_{\arc{q}}\setminus D_t).$$
The result then follows from the fact that if
\begin{itemize}
    \item $\arc{q}\in \mathcal{A}$ then $|R_\arc{q}\setminus D_t|\leq |R_\arc{q}\setminus (tA+(1-t)p_{\arc{q},B})|=t^2|R_{\arc{q},A}\setminus A|$, and if
    \item $\arc{q}\in \mathcal{B}$ then $|R_\arc{q}\setminus D_t|\le |R_{\arc{q}}\setminus (tp_{\arc{q},A}+(1-t)B)|= (1-t)^2|R_{\arc{q},B}\setminus B|$.
\end{itemize} 
\end{proof}

 From \Cref{containedloss}, we see that if the preimages in $A,B$ of these regions were disjoint and contained in $\co(A)$ and $\co(B)$, then we'd immediately obtain $|\co(D_t)\setminus D_t| \le t^2|\co(A)\setminus A|+(1-t)^2|\co(B)\setminus B|$.

Our goal will be to remove certain $R_{\arc{q},A}$ and $R_{\arc{q},B}$ to ensure that all remaining parallelograms are disjoint and are entirely contained in $\co(A)$ and $\co(B)$, such that the total area of the $R_{\arc{q},A}$ with $\arc{q}\in \mathcal{A}$ that were removed is at most $\epsilon |\co(A)\setminus A|$, and the total area of the $R_{\arc{q},B}$ with $\arc{q}\in \mathcal{B}$ that were removed is at most $\epsilon |\co(B)\setminus B|$. This will imply \Cref{mainthm'}.

\section{Far away weighted averages in $\partial \co(D_t)$ lie in $\jgood$}
\label{farweightedaveragessection}
We now show that points on the $\partial \co(D_t)$ which are the $t$-weighted average of points from $\partial \co(A)$, $\partial \co(B)$ that are at distance at least $20t^{-1}\ell$ lie in arcs from $\jgood$.

The main application will be to show that for parallelograms $R_{\arc{q}}$ with $\arc{q}\in \jbad$, we know that the point and parallelogram or parallelogram and point in $co(A)$ and $co(B)$ whose $t$-weighted average gives $R_{\arc{q}}$ are close to each other.
\begin{prop}\label{farisgoodlem}
For $d_\tau$ sufficiently small, if $p\in \partial \co(D_t)$ with $p=tx_A+(1-t)y_B$, where $x_A\in\partial \co(A)$, $y_B\in\partial \co(B)$ and $|x_Ay_B|\geq 20t^{-1}\ell$, then $p\in\arc{q}\in \jgood$.
\end{prop}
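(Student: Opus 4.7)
The plan is to show that when $|x_Ay_B|$ is large, $\partial\co(D_t)$ contains a long nearly-flat arc through $p$, from which $(\theta,\ell)$-goodness will follow for $p$ and every boundary point within Euclidean distance $3\ell$ of it, giving $p\in\arc{q}\in\jgood$. First set up coordinates so that the outward unit normal $\hat n$ to $\co(D_t)$ at $p$ points upward; the supporting line $L$ at $p$ is then horizontal, and parallel translates of it support $\co(A)$ at $x_A$ and $\co(B)$ at $y_B$. By \Cref{etascaleinside}, for $d_\tau$ small we have $(1-\eta)K\subset\co(A),\co(B),\co(D_t)\subset K$ with $\eta$ arbitrarily small, forcing each of $p,x_A,y_B$ to have $\hat n$-coordinate within $O(\eta)$ of $h_K(\hat n)$ (bounded by $2/\sqrt 3$ via \Cref{lengthsandareas}). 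In particular the vertical separation of $x_A$ and $y_B$ is $O(\eta)$, so their horizontal separation is at least $|x_Ay_B|-O(\eta)\ge 19t^{-1}\ell$ for $d_\tau$ small.

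The key step is to extract a long nearly-flat arc $\Gamma\subset\partial\co(D_t)$ through $p$. Because $x_A$ and $y_B$ both lie in the top cap $K\cap\{k\cdot\hat n\ge h_K(\hat n)-O(\eta)\}$ and are horizontally separated by at least $19t^{-1}\ell$, this cap is horizontally long; combined with $T\subset K\subset T'$ and the bisecting property of \Cref{bisectingcor}, one deduces that $\partial K$ between the horizontal coordinates of $x_A$ and $y_B$ has vertical variation $O(\eta)$. Since $(1-\eta)K\subset \co(D_t)\subset K$, the same holds for $\partial\co(D_t)$: the arc $\Gamma$ of $\partial\co(D_t)$ over this horizontal range has horizontal extent $\ge 18t^{-1}\ell$, vertical variation $O(\eta)$, and contains $p$.

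Next I would locate $p$ inside $\Gamma$ far from its endpoints. Assuming without loss of generality $x_A^{\mathrm h}\le y_B^{\mathrm h}$ for the horizontal coordinates, we have $p^{\mathrm h}-x_A^{\mathrm h}=(1-t)(y_B^{\mathrm h}-x_A^{\mathrm h})\ge (1-t)\cdot 19t^{-1}\ell\ge 19\ell$, using $t\le\frac12$; symmetrically $y_B^{\mathrm h}-p^{\mathrm h}\ge 19\ell$. Since $x_A^{\mathrm h}$ lies within $O(\eta)$ of $\Gamma$'s left endpoint and $y_B^{\mathrm h}$ within $O(\eta)$ of its right endpoint, $p$ is at horizontal distance $\ge 18\ell$ from either end of $\Gamma$. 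For any $q\in\partial\co(D_t)$ with $|pq|\le 3\ell$, we likewise have $q\in\Gamma$ at horizontal distance $\ge 15\ell$ from both endpoints. Choosing $u,v\in\Gamma\subset\co(D_t)$ at horizontal offsets $\pm\ell$ from $q$, we get $|qu|,|qv|\ge\ell$ and $\angle uqv\ge 180^\circ-O(\eta/\ell)$. Taking $d_\tau$ small enough that $O(\eta/\ell)\le\theta$ certifies $q$ as $(\theta,\ell)$-good, so no $(\theta,\ell)$-bad point lies within $3\ell$ of $p$ on $\partial\co(D_t)$, and $p\in\arc{q}\in\jgood$.

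The main obstacle is the key step, controlling the vertical variation of $\partial\co(D_t)$ over a long horizontal range; the argument must carefully handle the possibility that $K$'s top face in direction $\hat n$ is not a single long edge but rather a chain of near-horizontal faces. The depth bound on the top cap then has to exploit the sandwich $T\subset K\subset T'$ together with the $(59^\circ,\tfrac13)$-bisecting property from \Cref{bisectingcor} to show that no matter the fine structure of $\partial K$ at the top, the horizontally long top region has vertical variation $O(\eta)$.
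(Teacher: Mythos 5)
Your proof takes a genuinely different route from the paper, and (modulo the gap you yourself flag) it is correct. The paper constructs explicit witness points $x_{D_t},y_{D_t}\in\partial\co(D_t)$ (the radial projections of $x_A,y_B$), shows by perturbation that they certify $p$ as $(\tfrac{1}{2}\theta,19\ell)$-good, and then for each $p'$ within $3\ell$ of $p$ uses the supporting line together with \Cref{tangentangle} and \Cref{distlinepoint} to place $p'$ outside the angular region $x_{D_t}py_{D_t}$, finishing with an auxiliary isosceles-triangle construction that certifies $p'$ as $(\theta,\ell)$-good. You instead exhibit a single long nearly-horizontal arc $\Gamma\subset\partial\co(D_t)$ through $p$ and harvest two-sided witnesses $u,v$ for every nearby boundary point $q$ at once. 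The ``main obstacle'' you flag is in fact easy and needs neither the bisecting property nor any case analysis of the facial structure of the top cap of $K$: the vertical-variation bound on $\Gamma$ follows from convexity of $\co(D_t)$ alone. In your coordinates the upper boundary function $g$ of $\co(D_t)$ is concave with $g\le p^v\le h_K(\hat n)$; the points $(1-\eta)x_A,(1-\eta)y_B$ lie in $(1-\eta)K\subset\co(D_t)$ and have $\hat n$-coordinates $(1-\eta)h_{\co(A)}(\hat n)$, $(1-\eta)h_{\co(B)}(\hat n)\ge(1-\eta)^2h_K(\hat n)\ge h_K(\hat n)-O(\eta)$; hence by concavity $g\ge h_K(\hat n)-O(\eta)$ throughout the interval between the horizontal abscissae $(1-\eta)x_A^{\mathrm h}$ and $(1-\eta)y_B^{\mathrm h}$, so $g$ varies by $O(\eta)$ there, no matter how the top of $K$ is faceted. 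The one correction to make is that the natural endpoints of the flat range are $(1-\eta)x_A^{\mathrm h},(1-\eta)y_B^{\mathrm h}$ rather than $x_A^{\mathrm h},y_B^{\mathrm h}$; since these differ by $O(\eta)\ll\ell$, your subsequent distance-to-endpoint estimates ($\ge 18\ell$ for $p$, $\ge 15\ell$ for $q$) still hold. With that filled in, your argument is complete and arguably tidier than the paper's angle-chase, at the modest cost of needing $\eta$ small relative to $\theta\ell$ (achievable by shrinking $d_\tau$).
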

\begin{proof}
Let $\eta=\frac{40}{\sqrt{3}}\sin(\frac{\theta}{4})$. Assume $d_{\tau}$ is sufficiently small so that \Cref{tangentangle} holds, and $(1-\eta)K\subset \co(A),\co(B),\co(D_t)\subset K$ by \Cref{etascaleinside}. We will first show that $x_{D_t}$ and $y_{D_t}$ realize $p$ as a $(\frac{1}{2}\theta,19\ell)$-good point.
\begin{center}
\begin{tikzpicture}[scale=1.8]
\coordinate (xa) at (-1,0);
\coordinate (yb) at (2,0);
\coordinate (ooo) at (0,2);
\coordinate (pp) at (0,0);
\coordinate (xdt) at ($(xa)!0.10!(ooo)$);
\coordinate (ydt) at ($(yb)!0.15!(ooo)$);
\coordinate (z2) at ($(xdt)!2.3!(pp)$);
\coordinate (z1) at ($(ydt)!1.4!(pp)$);
\coordinate (m1) at ($(ydt)!0.4!(pp)$);
\coordinate (m2) at ($(xdt)!0.4!(pp)$);
\draw (xa)--(yb);
\draw (ooo) node[anchor=south] {$o$}--(xdt) node[anchor=east] {$x_{D_t}$}--(xa) node[anchor=east] {$x_A$};
\draw (ooo)--(ydt) node[anchor=west] {$y_{D_t}$}--(yb) node[anchor=west] {$y_B$};
\filldraw (xdt) circle (1pt);
\filldraw (ydt) circle (1pt);
\draw (xdt)-- (pp) node[anchor=north] {$p$} -- (ydt);
\draw[dotted] (pp)--(ooo);
\draw (pp) circle (8pt);
\draw[dashdotted] (-1.5,-0.1)--(3,0.2) node[anchor=south] {$l$};
\filldraw (z1) circle (1pt);
\filldraw (z2) circle (1pt);
\filldraw (m1) circle (1pt);
\filldraw (m2) circle (1pt);
\draw[dotted] (xdt)--(z1) node[anchor=north] {$z_2$}--(pp)--(z2) node[anchor=north] {$z_1$}--(ydt);
\draw (m1) node[anchor=south] {$m_1$};
\draw (m2) node[anchor=south] {$m_2$};
\end{tikzpicture}
\end{center}
 For the angle, note that by \Cref{lengthsandareas} we have $\angle x_{D_t}px_A \le \sin^{-1}(\frac{|x_Ax_{D_t}|}{|x_Ap|})\le \sin^{-1}(\frac{\eta|ox_A|}{20t^{-1}\ell})\le \sin^{-1}(\frac{\eta\sqrt{3}}{40t^{-1}\ell})\le \frac{\theta}{4}$ and similarly $\angle y_{D_t}py_B \le \sin^{-1}(\frac{|y_By_{D_t}|}{|y_Bp|})\le \frac{\theta}{4}$. For the lengths, notice that $|x_{D_t}x_A| \le \ell$ and similarly $|y_{D_t}y_A| \le \ell$, so by triangle inequality we have
 \begin{align*}
 |px_{D_t}| &\ge |px_A|-|x_{D_t}x_A| =(1-t)|x_Ay_B|-|x_{D_t}x_A|\ge 20\ell-\ell=19\ell,\text{ and}\\ |py_{D_t}| &\ge |py_B|-|y_{D_t}y_B|=t|x_Ay_B|-|y_{D_t}y_A|\ge 20\ell-\ell=19\ell.
 \end{align*}

Now, we show that $p\in \arc{q}\in \jgood$ by showing that if $p'\in \partial co(D_t)$ and $|pp'| \le 3\ell$, then we have $p'$ is $(\theta,\ell)$-good. Denote by $l$ the supporting line to $\co(D_t)$ at $p$, and note by \Cref{tangentangle} that $\angle l, op \in (29^{\circ},180^{\circ}-29^{\circ})$. The line $l$ intersects either the interior of the angle $\angle x_{D_t}px_A$ or $\angle y_{D_t}py_B$, so as we have already shown that $\angle x_{D_t}px_A,\angle y_{D_t}py_B \le \frac{\theta}{4}$, we have that $x_Ay_B$ makes an angle of at most $\frac{\theta}{4}$ with $l$. In particular, $\angle opx_A,\angle opy_B \in (29^{\circ}-\frac{\theta}{4},180^{\circ}-29^{\circ}+\frac{\theta}{4})\subset (28^{\circ},180^{\circ}-28^{\circ})$. Thus we may apply \Cref{distlinepoint} to triangles $x_Apo$ and $y_Bpo$ to conclude that the distance from $p$ to the lines $ox_A$ and $oy_B$ is at least $\sin(14^{\circ})\min(|px_A|,|po|,|py_B|) \ge \sin(14^{\circ})20\ell > 3\ell$. Because $ox_{D_t}py_{D_t} \subset \co(D_t)$, we conclude that $p'$ lies outside of the angle $x_{D_t}py_{D_t}$ (and because $p'\in \co(D_t)$, it lies on the same side of $l$ as $x_{D_t},y_{D_t}$). 

Let $z_1$ be in the ray $x_{D_t}p$ extended past $p$ such that $|z_1p|=|z_1y_{D_t}|$. Note that as $pz_1y_{D_t}$ is isosceles, $\angle pz_1y_{D_t}\geq \pi-\theta$, and note that $\angle y_{D_t}pz_1 \le \frac{\theta}{2}$. Analogously let $z_2$ be the point at $py_{D_t}$ which has $|z_2x_{D_t}|=|z_2p|$, so that $\angle pz_2x_{D_t}\geq \pi-\theta$ and $\angle x_{D_t}pz_2\le \frac{\theta}{2}$. Finally, let $m_1$ be the midpoint of $py_{D_t}$, and let $m_2$ be the midpoint of $px_{D_t}$, so that $\angle pm_1z_1=\angle pm_2z_2=90^{\circ}$.

We claim that $p' \in pm_1z_1 \cup pm_2z_2$. First, note that by the above we have shown that $p'$ lies in either the angular region $\angle m_1pz_1$ or $\angle m_2pz_2$. Thus as $pm_1z_1, pm_2z_2$ are right triangles, it suffices to note that $|pm_1|,|pm_2| \ge \frac{19}{2}\ell > 3\ell$.
Therefore, $p' \in pm_1z_1 \cup pm_2z_2 \subset py_{D_t}z_1 \cup px_{D_t}z_2$. Hence, $\angle y_{D_t}p'x_{D_t}\geq \pi-\theta$ and $p'$ is $(\theta, \ell)$-good since $|p'x_{D_t}|,|p'y_{D_t}| \ge 19\ell-3\ell > \ell$ by the triangle inequality. 



\end{proof}
\section{Bound on parallelograms jutting out of $\co(A),\co(B)$}
\label{juttingsection}
We will now show that the $R_{\arc{q},A}$ and $R_{\arc{q},B}$ which are not entirely contained in $\co(A)$ and $\co(B)$ have negligible total area.
\begin{prop}\label{notcontained}
For $d_\tau$ sufficiently small, we have
\begin{align*}\sum_{\arc{q}\in \mathcal{A}\text{ and }R_{\arc{q},A}\not\subset \co(A)}\big|R_{\arc{q},A}\big|\leq 25t^{-1}M\xi^2\gamma,\text{ and }
\sum_{\arc{q}\in \mathcal{B}\text{ and }R_{\arc{q},B}\not\subset \co(B)}\big|R_{\arc{q},B}\big|\leq 25t^{-1}M\xi^2\gamma.
\end{align*}
\end{prop}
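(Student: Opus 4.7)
The plan is to split the sum by whether $\arc{q}\in\jbad$ or $\arc{q}\in\jgood$, and within the good arcs to decompose further by the discrete direction $\widehat{v_\arc{q}}$, which ranges over $M$ possible values.

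\emph{Step 1: bad arcs contribute nothing.} I would first show that every $\arc{q}\in\mathcal{A}\cap\jbad$ satisfies $R_{\arc{q},A}\subset\co(A)$. By the contrapositive of \Cref{farisgoodlem}, the hypothesis $\arc{q}\in\jbad$ forces $|x_Ap_{\arc{q},B}|<20t^{-1}\ell$ for every $x_A\in\arc{q}_A$. Combining this localization with \Cref{bisectingcor} and the inclusion $(1-\eta)K\subset\co(A)$ from \Cref{etascaleinside}, the cross-section of $\co(A)$ at $\arc{q}_A$ in the direction $\widehat{v_\arc{q}}$ is bounded below by a fixed positive constant, whereas the height of $R_{\arc{q},A}$ is only $15\sqrt{\gamma}/t$, which is negligible once $d_\tau$ is small enough.

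\emph{Step 2: good arcs per direction.} For each of the $M$ possible values $u$ of $\widehat{v_\arc{q}}$, let $B_u\subset\partial\co(A)$ be the union of the bases $\arc{q}_A$ with $\arc{q}\in\mathcal{A}\cap\jgood$, $\widehat{v_\arc{q}}=u$, and $R_{\arc{q},A}\not\subset\co(A)$. Every such base must lie in the region of $\partial\co(A)$ where the cross-section in direction $u$ is less than the $A$-side parallelogram height $3\xi\sqrt{\gamma}/t$. Using that $(1-\eta)K\subset\co(A)\subset K$ and $K$ is close to the unit equilateral triangle $T$, this thin region is concentrated near a single vertex of $\co(A)$ (the one ``opposite'' to $u$) and has total length at most a universal multiple of $\xi\sqrt{\gamma}$. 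Since the bases are pairwise disjoint on $\partial\co(A)$, the contribution in direction $u$ is bounded by
\[
\frac{3\xi\sqrt{\gamma}}{t}\cdot|B_u|,
\]
which is of order $\xi^2\gamma/t$. Summing over the $M$ directions produces the claimed bound $25t^{-1}M\xi^2\gamma$, and the symmetric argument with $t$ replaced by $1-t$ handles $\mathcal{B}$ and $\co(B)$.

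\emph{Main obstacle.} The essential difficulty is Step 1: translating the Euclidean closeness $|x_Ap_{\arc{q},B}|<20t^{-1}\ell$ supplied by \Cref{farisgoodlem} into a positive lower bound on the cross-section of $\co(A)$ in the specific direction $\widehat{v_\arc{q}}$, which is itself one of $M$ discrete inward directions determined by the angular interval containing the $\hbad\cup\rgood$-arc that contains $\arc{q}$. Since a priori $\widehat{v_\arc{q}}$ could be nearly tangent to $\partial\co(A)$ at $\arc{q}_A$, this step requires a careful angular analysis relating the discretization to the position of $\arc{q}_A$ within $\partial\co(A)$ and exploiting that $\gamma$ is much smaller than every fixed geometric scale once $d_\tau$ is sufficiently small.
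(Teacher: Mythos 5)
Your overall decomposition --- bad arcs contribute nothing, good arcs bounded per direction --- matches the paper's strategy, but the gap you flag in your ``Main obstacle'' paragraph is genuine and is precisely what the paper's \Cref{stickingout} supplies. The paper does not try to convert the localization ``$|x_A p_{\arc{q},B}| < 20t^{-1}\ell$'' into a cross-section lower bound. It argues forward from non-containment instead. Suppose $R_{\arc{q},A}\not\subset\co(A)$. If $\angle p_Ao, v_{\arc{q}} < 29^\circ$ held at every $p_A\in\arc{q}_A$, then since the side $t^{-1}v_{\arc{q}}$ of $R_{\arc{q},A}$ has length at most $\frac{1}{10}$, the parallelogram would sit inside $\bigcup_{x\in\arc{q}_A}T_{x}(58^\circ,\frac13)\subset\co(A)$ by \Cref{bisectingcor}, contradiction; so some $p_A\in\arc{q}_A$ has $\angle p_Ao, v_{\arc{q}}\geq 29^\circ$. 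On the other hand, for $z=tp_A+(1-t)p_{\arc{q},B}\in\arc{q}\subset\partial\co(D_t)$, \Cref{halfalpha} gives $\angle zo, v_{\arc{q}}\leq\frac12\alpha$, so $\angle p_Aoz\geq 28^\circ$, which forces $|p_Az|\geq\sin(28^\circ)|oz|>\frac{1}{100}$ and hence $|p_Ap_{\arc{q},B}|>\frac{1}{100}$; \Cref{farisgoodlem} then puts $\arc{q}\in\jgood$. This is exactly the ``careful angular analysis'' you asked for: the discretization enters through \Cref{halfalpha}, and the bisecting property is the device that converts non-containment into a large angle, sidestepping your worry that $\widehat{v_{\arc{q}}}$ could be nearly tangent at $\arc{q}_A$. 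Without reproducing this argument (or its contrapositive restated in your Step 1 direction), Step 1 is incomplete.

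Your Step 2 is sound in outline, with two small inaccuracies: the thin-cross-section region in direction $u$ on $\partial\co(A)$ clusters near the \emph{two} tangency points $y,y'$ of lines in direction $u$ to $\co(A)$, not a single vertex ``opposite'' to $u$; and the total arc length near each tangency carries a $t^{-1}$ factor, which you dropped. The paper bounds this arc length by roughly $t^{-1}\xi\sqrt{\gamma}$ using the bisecting triangle at each of $y,y'$ together with unimodality of the $u$-cross-sections, then multiplies by the parallelogram height to get $O(t^{-1}\xi^2\gamma)$ per direction. These are routine corrections once Step 1 is in place.
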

To prove this proposition, we first use \Cref{farisgoodlem} to show that for such parallelograms we have $\arc{q} \in \jgood$.

\begin{lem}\label{stickingout}
For $d_\tau$ sufficiently small, if $\arc{q}\in \mathcal{A}$ and $R_{\arc{q},A}\not\subset \co(A)$ or $\arc{q}\in \mathcal{B}$ and $R_{\arc{q},B} \not \subset \co(B)$, then $\arc{q}\in \jgood$.
\end{lem}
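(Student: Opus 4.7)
The plan is to prove the contrapositive: assume $\arc{q}\in\jbad$ and show $R_{\arc{q},A}\subset\co(A)$ if $\arc{q}\in\mathcal{A}$ (the $\mathcal{B}$ case follows by symmetry, swapping $A\leftrightarrow B$ and $t\leftrightarrow 1-t$). The key input is \Cref{farisgoodlem} applied contrapositively: for every $p\in\arc{q}$, writing $p=tx_A+(1-t)y_B$ with $x_A\in\arc{q}_A$ and $y_B=p_{\arc{q},B}$, we have $|x_Ay_B|<20t^{-1}\ell$. Hence the entire preimage segment $\arc{q}_A$ lies within Euclidean distance $20t^{-1}\ell$ of the vertex $p_{\arc{q},B}\in\partial\co(B)$.

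First I would dispose of the generic geometry. The parallelogram $R_{\arc{q},A}$ has $\arc{q}_A$, contained in some edge $e_A$ of $\partial\co(A)$, as one side, and extends in direction $\widehat{v_{\arc{q}}}$ for length $15\sqrt\gamma/t$ (the bad-arc height, by definition of $v_{\arc{q}}$). Since $\arc{q}\in\mathcal{A}$, the edge $e_A$ is parallel to $\arc{q}$ (as $\arc{q}$ sits on the edge $te_A+(1-t)p_{\arc{q},B}$ of $\partial\co(D_t)$). By \Cref{halfalpha} combined with \Cref{tangentangle}, $\widehat{v_{\arc{q}}}$ makes angle in $(29^\circ-\tfrac{\alpha}{2},\,180^\circ-29^\circ+\tfrac{\alpha}{2})\subset(28^\circ,152^\circ)$ with this common tangent direction, so it points strictly inward from the supporting line of $\co(A)$ at $e_A$. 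For $d_\tau$ sufficiently small, $\sqrt\gamma/t$ is negligible compared to any width of $\co(A)$ (using $(1-\eta)K\subset\co(A)$ from \Cref{etascaleinside}), so the only way $R_{\arc{q},A}$ can leave $\co(A)$ is to be cut by an edge of $\partial\co(A)$ adjacent to $e_A$ at a vertex $v$ of $\partial\co(A)$ that coincides with an endpoint of $\arc{q}_A$. A direct angle computation then forces the exterior angle $\psi$ at $v$ to exceed the angle between $\widehat{v_{\arc{q}}}$ and the outgoing tangent of $e_A$, so in particular $\psi>28^\circ$.

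The hard part will be ruling out this last scenario, and I expect it to be the crux of the proof. My plan is to exploit the Minkowski sum structure to transfer the sharp turn at $v\in\partial\co(A)$ to a wide angular transition of $\partial\co(D_t)$ occurring at the vertex $tv+(1-t)y_B$, which is Euclidean-close to $\arc{q}$ (on the order of $\sqrt\gamma/t$). During this transition the supporting direction sweeps through $[\beta,\beta+\psi]$, and $\partial\co(D_t)$ traces a $(1-t)$-scaled translate of the corresponding arc of $\partial\co(B)$ based at $y_B$. I would then apply \Cref{farisgoodlem} to points $p'$ along this sweep, whose Minkowski decompositions have the $\partial\co(A)$-side pinned at $v$ and the $\partial\co(B)$-side $y'_B$ moving along $\partial\co(B)$ away from $y_B$. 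Because $|vy_B|<20t^{-1}\ell$ but $y'_B$ eventually leaves any $20t^{-1}\ell$-neighborhood of $y_B$ once the direction has rotated enough (using that $\psi>28^\circ$ is a fixed positive angle and $\partial\co(B)$ has bounded-below total length), we obtain some nearby $p'\in\jgood$, forcing a long almost-straight piece of $\partial\co(D_t)$ in between $\arc{q}$ and $p'$. This provides exactly the straight structure required to show $\arc{q}\in\jgood$, contradicting our assumption $\arc{q}\in\jbad$.
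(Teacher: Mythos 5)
Your proposal has a genuine gap in the final step, and it misses the key mechanism that makes the paper's argument work. You correctly identify \Cref{farisgoodlem} as the tool, but you apply it to a point $p'$ \emph{away} from $\arc{q}$ (of the form $tv + (1-t)y_B'$ along the sweep at the vertex $v$). That only tells you the arc of $\jj$ containing $p'$ is in $\jgood$; it says nothing about $\arc{q}$ itself. The claim that this ``forces a long almost-straight piece of $\partial\co(D_t)$ in between $\arc{q}$ and $p'$'' and thereby yields $\arc{q}\in\jgood$ is a non-sequitur: membership in $\jgood$ requires every point of $\arc{q}$ to be at Euclidean distance at least $3\ell$ from all $(\theta,\ell)$-bad points, and nothing in the sweep argument controls the presence of bad points near $\arc{q}$ itself. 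The correct move is to apply \Cref{farisgoodlem} to a decomposition of a point $z\in\arc{q}$ \emph{directly}.

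The paper does exactly this, and in doing so entirely sidesteps the vertex-of-$\co(A)$ analysis you sketch (which also has unjustified steps: the vertex $v$ need not coincide with an endpoint of $\arc{q}_A$, and the claim $\psi>28^\circ$ is not established by your ``direct angle computation'' --- a parallelogram can be truncated near a vertex of small exterior angle if the base happens to sit close to that vertex, so the clean dichotomy you propose does not hold). Instead, the paper uses the $(59^\circ,\tfrac{1}{3})$-bisecting property from \Cref{bisectingcor}: if for every $x\in\arc{q}_A$ we had $\angle(xo,v_{\arc{q}})\le 29^\circ$, then each fiber $xx'$ of $R_{\arc{q},A}$ (with $x'=x+t^{-1}v_{\arc{q}}$, of length at most $\tfrac{1}{10}$) would lie in $T_x(58^\circ,\tfrac{1}{3})\subset\co(A)$, forcing $R_{\arc{q},A}\subset\co(A)$. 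So $R_{\arc{q},A}\not\subset\co(A)$ produces some $p_A\in\arc{q}_A$ with $\angle(p_Ao,v_{\arc{q}})\ge 29^\circ$. Combined with \Cref{halfalpha}, which gives $\angle(zo,v_{\arc{q}})\le\tfrac{\alpha}{2}$ for $z=tp_A+(1-t)p_{\arc{q},B}\in\arc{q}$, the triangle inequality for angles yields $\angle p_Aoz\ge 28^\circ$, hence $|p_Az|>\tfrac{1}{100}$, hence $|p_Ap_{\arc{q},B}|>\tfrac{1}{100}\ge 20t^{-1}\ell$. Now \Cref{farisgoodlem} applied at $z\in\arc{q}$ with this decomposition gives $\arc{q}\in\jgood$ directly. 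Your overall direction (contrapose and feed a far-apart decomposition into \Cref{farisgoodlem}) is sound; what's missing is the realization that the far-apart decomposition must be of a point \emph{in} $\arc{q}$, and the bisecting argument that produces it.
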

\begin{proof}
The cases $\arc{q}\in \mathcal{A}$ and $\arc{q}\in \mathcal{B}$ are proved identically, so we will now suppose that $\arc{q}\in \mathcal{A}$. Assume $d_{\tau}$ is sufficiently small so that \Cref{bisectingcor} and \Cref{farisgoodlem} are true.
Recall that we defined $p_{\arc{q},B}\in \partial \co(B)$ and $\arc{q}_A \subset \co(A)$ such that $\arc{q}=t\arc{q}_A+(1-t)p_{\arc{q},B}$.

We first show that there exists a point $p_A\in \arc{q}_A$ such that $\angle p_Ao,v_{\arc{q}} \ge 29^{\circ}$. Indeed, by \Cref{bisectingcor} we know that every point in $x \in \arc{q}_A$ is $(59^\circ, \frac13)$-bisecting in $\co(A)$.
For $x\in \arc{q}_A$, let $x'=x+t^{-1}v_{\arc{q}}$, which lies on the opposite side of $\partial R_{\arc{q},A}$ to $p_A$. Note that $|xx'|\leq \frac{1}{10}$, so if $\angle ox,v_q\leq 29^\circ$, then $xx' \subset T_{p_A}(58^\circ,\frac13)$. Hence, as $R_{\arc{q},A}=\bigcup_{x \in \arc{q}_A}xx'\not\subset co(A)$ but $\bigcup_{x\in \arc{q}_A} T_x(58^\circ,\frac13)\subset \co(A)$, we find a point $p_A\in\arc{q}_A$ with $\angle p_Ao,v_{\arc{q}}\geq 29^\circ$.


\begin{center}
\begin{tikzpicture}
\draw (0,0)--(-1,1)--(-1,2) --(0,4) --(1,4.8)--(2,5)--(3,4)--cycle;

\draw (4,0)--(3.5,0.5)--(3.5,1)-- (4,3) --(4.5,4)--(5,4.4)--(5.5,4.5)--(6,4)--(6,2)--(4.5,0)--cycle;

\coordinate (xx) at (2.7375,3.65);
\coordinate (xx') at (2.7375,4.65);

\draw (2.625,3.5)-- node[midway,below] {$\arc{q}_A$}(2.85,3.8) --(2.85,4.8)--(2.625,4.5)--cycle;

\filldraw (xx) circle (1pt);
\filldraw (xx') circle (1pt);
\draw (xx) node[anchor=west] {$x$};
\draw (xx') node[anchor=south] {$x'$};

\draw (5.8125,1.75)-- node[midway,below] {$\arc{q}$}(5.925,1.9)--(5.925,2.4)--(5.8125,2.25)--cycle;


\filldraw (1,3) circle (1pt);
\draw (1,3) node[anchor=north] {$o$}--(xx);
\draw (4.75,2) node{$\co(D_t)$};




\draw (0.5,2) node {$\co(A)$};

\end{tikzpicture}
\end{center}

Let $z=tp_A+(1-t)p_{\arc{q},B}\in \arc{q}$. By \Cref{halfalpha}, $\angle zo, v_{\arc{q}} \le \frac{1}{2}\alpha$. Hence $\angle p_Aoz \ge 29^{\circ}-\frac{1}{2}\alpha \ge 28^{\circ}$, so $|p_Az| \ge \sin(28^{\circ})|oz|>\frac{1}{100}$, so as $z$ lies on the segment $p_Ap_{\arc{q},B}$, we have $|p_Ap_{\arc{q},B}| > \frac{1}{100}$.
Therefore, by \Cref{farisgoodlem} applied with $x_A=p_A$ and $y_B=p_{\arc{q},B}$, we have $z\in \arc{q}\in \jgood$.

\end{proof}

We now know that parallelograms $R_{\arc{q},A}$ and $R_{\arc{q},B}$ which escape $\co(A)$ and $\co(B)$ have small height, since they are supported on arcs from $\jgood$. By showing that such arcs with a constant direction $v_p$ have small total length, we will obtain the following (recalling $M$ is the number of distinct $v_p$).

\begin{proof}[Proof of \Cref{notcontained}]
The proof below works for the $\co(B)$ inequality verbatim, so we focus on proving the $\co(A)$ inequality. Take $d_\tau$ sufficiently small so that \Cref{bisectingcor} holds, and so that $t^{-1}3\xi\sqrt{\gamma} \le \frac{1}{4}\sin(1^{\circ})$ by \Cref{gammato0}.

By \Cref{stickingout}, all $\arc{q}\in \mathcal{A}$ with $R_{\arc{q},A}\not\subset co(A)$ are in $\jgood$. Fix one of the $\le M$ vectors $v$ with $|v|=3\xi\sqrt{\gamma}$. It suffices to show
$$\sum_{\arc{q}\in \mathcal{A}\text{, }v_{\arc{q}}=v\text{, and } R_{\arc{q},A}\not\subset \co(A)}|R_{\arc{q},A}| \le 25t^{-1}\xi^2 \gamma.$$

Recall that by construction $v$ was chosen so that it was not parallel to any edge of $\co(A)$. Let $l,l'$ be the two lines in the direction $v$ which are tangent to $\co(A)$, and let $y$ and $y'$ be the points of contact with $\co(A)$. Note that every line in the direction $v$ between $y$ and $y'$ intersects each of the arcs $\partial \co(A)\setminus\{y,y'\}$ exactly once. As $\co(A)$ is convex, the cross-sectional slices in the $v$-direction satisfy unimodality. Hence there are exactly two pairs $(x_1,x_2)$ and $(x_1',x_2')$ of points in the two different arcs of $\partial \co(A)\setminus\{y,y'\}$ such that $x_1x_2=x_1'x_2'=t^{-1}v$ --- we let $(x_1,x_2)$ be the pair closer to $y$.

We will show that the lengths of the two minor arcs in $\co(A)$ between $x_1x_2$ and between $x_1'x_2'$ are both of length at most $24t^{-1}\sqrt{\gamma}$. We show this for $x_1x_2$ as the other case will be identical.

Note that $T_y(56^{\circ},\frac{1}{4})\subset T_y(59^{\circ},\frac{1}{3})\subset \co(A)$.  Let $z\in oy$ such that $|yz|=t^{-1}3\xi \sqrt{\gamma} \le \frac{1}{4}\sin(1^{\circ})$ and denote by $z_1,z_2$ the intersections of the extensions of the arms of $T_y(56^{\circ},\frac{1}{4})$ with the line through $z$ with direction vector $v$. We will show that the line $x_1x_2$ is closer to $y$ than the line $z_1z_2$ by showing that $|z_1z_2| \ge |x_1x_2|$ and applying unimodality.

\begin{center}
\begin{tikzpicture}[scale=1.7]
\draw (4,0)--(3,1)--(3,2) --(4,4) --(5,4.8)--(6,5)--(7,4)--cycle;

\coordinate (r') at (6,1.5);
\coordinate (p) at (6,2.5);
\coordinate (o) at (4.25,2.5);
  \coordinate (r)  at (intersection-1);

\coordinate (x) at ($(4,0)!0.15!(7,4)$);
\coordinate (x') at ($(5.15,2)$);
\coordinate (y) at ($(4,0)!0.0!(4.5,0)$);
\coordinate (z) at ($(4,0)!0.6!(3,1)$);
\coordinate (z') at ($(2.85,2)$);
\coordinate (x'') at ($(x')!0.2!(y)$);
\coordinate (z'') at ($(z')!0.2!(y)$);
\coordinate (x''') at ($(x')!0.4!(y)$);
\coordinate (z''') at ($(z')!0.4!(y)$);
\filldraw (x) circle (1pt);
\filldraw (z) circle (1pt);
\draw (x) node[anchor=west] {$x_2$};
\draw (z) node[anchor=east] {$x_1$};
\draw (z'')--(y) node[anchor=north] {$y$}--(x'');
\draw[name path=x1x2] (x)-- (z);
\draw pic[draw, angle radius=0.7 cm, "$56^{\circ}$" shift={(1.8mm,4mm)}] {angle=x'--y--z'};

\filldraw (x''') circle (1pt);
\filldraw (z''') circle (1pt);
\filldraw (o) circle (1pt);
\draw (x''') node[anchor=south] {$z_2$};
\draw (z''') node[anchor=south west] {$z_1$};


\draw[<-] ($(x)-(2,-2)$)-- node[midway, above] {$\vec{v}$} ($(z)-(2,-2)$);

\draw[name path=supp1] ($(y)+(z)+(z)-(x)-(x)$)--($(y)+(x)-(z)$);
\draw[name path=oy]
(o) node[anchor=south] {$o$}--(y);
\draw[name path=z1z2]
(x''')--(z''');
\path [name intersections={of = oy and z1z2}];
  \coordinate (zzzz)  at (intersection-1);
\path [name intersections={of = oy and x1x2}];
  \coordinate (xxxx)  at (intersection-1);
\filldraw (xxxx) circle (1pt);
\filldraw (zzzz) circle (1pt);
\draw (xxxx) node[anchor=south west] {$x$};
\draw (zzzz) node[anchor=south west] {$z$};
\end{tikzpicture}
\end{center}

Note that $\angle z_1yz =28^{\circ}$ and $\angle z_1zy \in (29^{\circ}, 180^{\circ}-29^{\circ})$. Hence $\angle yz_1z \in (1^{\circ},180^{\circ}-57^{\circ})$ so $\sin \angle yz_1z \ge \sin(1^{\circ})$. Thus by the law of sines,
$$|yz_1| =\frac{\sin \angle z_1zy}{\sin \angle yz_1z} |yz| \le \frac{|yz|}{\sin 1^{\circ}}\le \frac{1}{4}.$$
Hence $z_1\in T_y(56^{\circ},\frac{1}{4})$ and by a similar argument we obtain $z_2 \in T_y(56^{\circ},\frac{1}{4})$. 

Now, $$|z_1z_2| \ge |z_1z|=\frac{\sin 28^{\circ}}{\sin \angle yz_1z}|yz|\ge \sin(28^{\circ})|yz|=t^{-1}3\xi \sqrt{\gamma}=|x_1x_2|.$$
Thus by the unimodality, the line $x_1x_2$ is closer than the line $z_1z_2$ to $y$, so denoting by $x=oy \cap x_1x_2$ we have $x$ lies in the segment $yz$. Hence
$$|yx| \le |yz|=t^{-1}3\xi\sqrt{\gamma}.$$ 

Note that there are up to $2$ arcs $\arc{q}_A$ which contain one of the points $x_1,x_1'$, and as each arc in $\jj$ has length at most $\xi\sqrt{\gamma}$ by construction, the total length of these arcs is at most $2t^{-1}\xi \sqrt{\gamma}$. 

If $v_{\arc{q}}=v$ and $R_{\arc{q},A} \not \subset \co(A)$, then $\arc{q}_A$ is contained in the arc of $\partial \co(A)\setminus \{y,y'\}$ containing $x_1,x_1'$, and $\arc{q}_A$ intersects either the minor arc subtended by $x_1y$ or by $x_1'y'$. Indeed, let $\widetilde{l}$ be the supporting line of $\arc{q}$. Then for any point $p\in \arc{q}$, by \Cref{bisectingcor} the angle $\angle po, \widetilde{l} \in (29^{\circ},180^{\circ}-29^{\circ})$, and by \Cref{halfalpha} $\angle po,v_{\arc{q}} \le \frac{\alpha}{2}$. Hence $v_{\arc{q}}$ lies on the same side of $\widetilde{l}$ as $\co(D_t)$. Therefore $v_{\arc{q}}$ lies on the same side of the supporting line $\widetilde{l}_A$ to $\arc{q}_A$ as $\co(A)$, so $\arc{q}_A$ lies in the arc of $\co(A)\setminus \{y,y'\}$ that contains $x_1,x_1'$. Now, if $\arc{q}_A$ does not intersect the minor arcs $x_1y$ or $x_1'y'$, then by unimodality, the $v$ cross-sectional lengths of $\co(A)$ on the arc $\arc{q}_A$ exceed $3\xi t^{-1}\sqrt{\gamma}=||t^{-1}v||$, which implies $R_{\arc{q}_A}$ is contained inside $\co(A)$.

Hence, the total width (measured in the direction $v^\perp$) of such parallelograms $R_{\arc{q},A}$ in direction $v$ which are not contained in $co(A)$ is at most $2\cdot t^{-1}3\xi\sqrt{\gamma}+2t^{-1}\xi \sqrt{\gamma}=8t^{-1}\xi \sqrt{\gamma} $.


Because all of the arcs $\arc{q}$ we are considering lie in $\jgood$, the total area of such parallelograms is then at most
$$(8t^{-1}\xi\sqrt{\gamma})(3\xi \sqrt{\gamma})=24t^{-1}\xi^2 \gamma.$$

\end{proof}

\section{Bounding overlapping parallelograms}
\label{boundingoverlappingsection}
We will now show that the $R_{\arc{q},A}$ and $R_{\arc{q},B}$ which we remove to guarantee non-overlapping have negligible area.

\begin{prop}\label{disjointbad}
For $d_\tau$ sufficiently small, if $\arc{q},\arc{q}'\in \jbad\cap \mathcal{A}$, then $|R_{\arc{q},A}\cap R_{\arc{q}',A}|=0$, and if $\arc{q,q}'\in \jbad\cap \mathcal{B}$, then $|R_{\arc{q},B}\cap R_{\arc{q}',B}|=0$.
\end{prop}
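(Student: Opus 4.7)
The plan is to argue by contradiction using \Cref{farisgoodlem} from the previous section; by the symmetry between $\mathcal{A}$ and $\mathcal{B}$ I focus on the $\mathcal{A}$ case. Suppose for contradiction that distinct $\arc{q}, \arc{q}' \in \jbad \cap \mathcal{A}$ satisfy $|R_{\arc{q},A} \cap R_{\arc{q}',A}| > 0$, and I will derive a contradiction for $d_\tau$ sufficiently small.

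The first step is to exploit the fact that the parallelograms $R_{\arc{q},A}, R_{\arc{q}',A}$ have all side lengths of order $t^{-1}\sqrt{\gamma}$, which tends to $0$ with $d_\tau$ by \Cref{gammato0}. Since the bases $\arc{q}_A, \arc{q}'_A$ are disjoint subarcs of $\partial \co(A)$, a positive-measure intersection of the parallelograms forces these bases to lie within Euclidean distance $O(t^{-1}\sqrt{\gamma})$ of each other. For $d_\tau$ small enough, this places the bases either on a single edge of $\partial \co(A)$ or on two adjacent edges meeting at a common vertex $P$ (edges of $\partial \co(A)$ not sharing a vertex are at some fixed positive distance independent of $\gamma$). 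In parallel, \Cref{farisgoodlem} pins each base within Euclidean distance $20 t^{-1}\ell$ of its fixed vertex $p_{\arc{q},B}$ (resp.\ $p_{\arc{q}',B}$) of $\partial \co(B)$.

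In both subcases the strategy is to first show $v_{\arc{q}} = v_{\arc{q}'}$. In the common-edge subcase one automatically has $p_{\arc{q},B} = p_{\arc{q}',B}$, so the overlap transports via a common translation-and-scaling to a positive-measure intersection $R_{\arc{q}} \cap R_{\arc{q}'}$ on $\partial \co(D_t)$. Since both arcs lie in $\bad \subset \hbad$, each is contained in a connected component of $\bigcup \hbad$; but the bad portions in distinct such components are Euclidean-separated on $\partial \co(D_t)$ by at least $\sim t^{-1}\ell$ (by comparing the $100t^{-1}\ell$ vs.\ $3\ell$ thickenings in their definitions), which far exceeds the parallelogram diameters $O(\sqrt{\gamma})$. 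Hence $\arc{q}, \arc{q}'$ lie in a common $\hbad$-component, whereupon \Cref{vqdefn} yields $v_{\arc{q}} = v_{\arc{q}'}$; with a common direction and disjoint bases on a common line the parallelograms are disjoint, a contradiction. In the adjacent-edges subcase, the triangle inequality gives $|P - p_{\arc{q},B}|, |P - p_{\arc{q}',B}| \le 20 t^{-1}\ell + O(t^{-1}\sqrt{\gamma})$, so the two endpoints $tP + (1-t)p_{\arc{q},B}$ and $tP + (1-t)p_{\arc{q}',B}$ on $\partial \co(D_t)$ sit within $O(t^{-1}\ell)$ of each other, so $\arc{q}$ and $\arc{q}'$ again lie in a common $\hbad$-component and $v_{\arc{q}} = v_{\arc{q}'}$. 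The main obstacle I foresee is closing this adjacent-edges subcase: the two parallelograms share a $v$-direction, have bases within $O(t^{-1}\sqrt{\gamma})$ of $P$ on two distinct incident edges of $\partial \co(A)$, and one must combine the constraints of \Cref{halfalpha} and \Cref{tangentangle} on the angle of $v$ relative to the edges at $P$, together with the convexity of $\co(A)$, to rule out their overlap.
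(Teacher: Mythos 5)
Your argument relies on the same key ingredient as the paper (the contrapositive of \Cref{farisgoodlem}: for $\arc{q}\in\jbad$, every $x_A\in\arc{q}_A$ satisfies $|x_A p_{\arc{q},B}|<20t^{-1}\ell$) but you organize it so that you always reach the conclusion $v_{\arc{q}}=v_{\arc{q}'}$ and then need the implication "same direction $\Rightarrow$ disjoint." The paper instead case-splits on whether $\arc{q},\arc{q}'$ lie in the same $\hbad$-arc; in the different-component case it never needs the disjointness fact at all, because it observes that $\arc{q},\arc{q}'$ are at distance $\geq 97t^{-1}\ell$ on $\partial\co(D_t)$ while $\arc{q}_A,\arc{q}'_A$ are within $30t^{-1}\sqrt{\gamma}$, so for one of them (say $\arc{q}$) some $p\in\arc{q}$ is at distance $\geq 33t^{-1}\ell$ from $\arc{q}_A$, giving $|x_Ay_B|\geq 20t^{-1}\ell$ and hence $\arc{q}\in\jgood$ by \Cref{farisgoodlem} -- a direct contradiction. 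That shortcut is what your route gives up.

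Two concrete issues with your writeup. The parenthetical claim that edges of $\partial\co(A)$ not sharing a vertex are at a fixed positive distance independent of $\gamma$ is false -- $\co(A)$ is a polygon that can have many arbitrarily short edges -- but this does not matter, because your "adjacent edges" argument never actually uses the vertex $P$: it only uses that $\arc{q}_A$ and $\arc{q}'_A$ are Euclidean-close (from the overlap) and each is within $20t^{-1}\ell$ of its $p_{\arc{q},B}$, so $|p_{\arc{q},B}-p_{\arc{q}',B}|=O(t^{-1}\ell)$, hence $\arc{q},\arc{q}'$ are within $O(t^{-1}\ell)$ on $\partial\co(D_t)$ and lie in the same $\hbad$-component, giving $v_{\arc{q}}=v_{\arc{q}'}$. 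Drop the case split; this single argument covers every configuration of the bases.

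The gap you flag -- same direction $\Rightarrow$ $|R_{\arc{q},A}\cap R_{\arc{q}',A}|=0$ -- is real, but it is also relied on tacitly by the paper (in the step "If $\arc{r}=\arc{r}'$ then $v_{\arc{q}}=v_{\arc{q}'}$ so $|R_{\arc{q},A}\cap R_{\arc{q}',A}|=0$," and again in the proof of \Cref{overlapping}), and it is closable by exactly the tools you name. By \Cref{halfalpha} and \Cref{tangentangle}, the supporting line at any $p\in\arc{q}$ makes angle $>29^\circ-\tfrac{1}{2}\alpha>0$ with $v_{\arc{q}}$, and since supporting lines translate across the Minkowski sum this means $v_{\arc{q}}$ points strictly into $\co(A)$ from every point of $\arc{q}_A$ (the paper spells this out inside the proof of \Cref{notcontained}). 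Therefore all bases $\arc{q}_A$ with a given direction $v$ lie on one arc of $\partial\co(A)\setminus\{y,y'\}$, where $y,y'$ are the contact points of the two $v$-direction tangent lines; a line in direction $v$ meets that arc at most once, hence meets at most one of the disjoint bases $\arc{q}_A,\arc{q}'_A$, hence meets at most one of the two parallelograms, and Fubini gives $|R_{\arc{q},A}\cap R_{\arc{q}',A}|=0$. With this lemma inserted, your proof is complete.
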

Because of \Cref{disjointbad}, it will suffice to bound overlaps between parallelograms supported on arcs in $\jgood$ with all other parallelograms.
\begin{prop}\label{overlapping}
For $d_\tau$ sufficiently small, we have $$\sum_{\arc{q}\in\jgood\cap \mathcal{A}\text{ and } \exists  \arc{q}'\in \mathcal{A}\setminus \{\arc{q}\}\text{ with } |R_{\arc{q},A}\cap R_{\arc{q}',A}|>0}\big|R_{\arc{q},A}\big| \leq 16000 t^{-1} M\xi \gamma $$
and similarly with $B$ and $\mathcal{B}$.
\end{prop}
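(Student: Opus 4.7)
The plan is to bound the total arc length on $\partial\co(A)$ supporting good parallelograms $R_{\arc{q},A}$ that overlap some other parallelogram $R_{\arc{q}',A}$, and show that this length is $O(M\sqrt{\gamma})$. Multiplying by the maximum height of a good parallelogram on the $A$-side, namely $3t^{-1}\xi\sqrt{\gamma}$, then yields the target area bound $16000\,t^{-1}M\xi\gamma$.

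First I would prove a localization lemma: if $R_{\arc{q},A}\cap R_{\arc{q}',A}$ has positive measure, then the supporting edges $\arc{q}_A$ and $\arc{q}'_A$ are close on $\partial\co(A)$. The ingredients are \Cref{bisectingcor} and \Cref{halfalpha}, which together force the angle between $v_{\arc{q}}$ and the outward normal to $\co(A)$ at $\arc{q}_A$ to be bounded away from $90^{\circ}$. Consequently each $R_{\arc{q},A}$ sits inside a thin strip of perpendicular height $O(t^{-1}\sqrt{\gamma})$ above $\arc{q}_A$; an overlap forces the two strips to meet, and then convexity of $\co(A)$ together with the angle bound forces $\arc{q}_A, \arc{q}'_A$ to lie at bounded arc-distance on $\partial\co(A)$.

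I would then partition overlaps by the $M$ possible directions of $v_{\arc{q}}$. Fix a direction $v$ and consider same-direction overlaps: if $R_{\arc{q},A}, R_{\arc{q}',A}$ both have direction $v$ and positive-measure intersection, their edges $\arc{q}_A, \arc{q}'_A$ either lie on a common straight segment of $\partial\co(A)$, in which case they tile a slab and overlap only in measure zero, or they straddle one of the two points $y,y'\in\partial\co(A)$ where the tangent direction is parallel to $v$. The unimodality argument used in the proof of \Cref{notcontained} then localizes overlapping configurations to arcs within $O(t^{-1}\sqrt{\gamma})$ of $y$ or $y'$ on $\partial\co(A)$, contributing $O(t^{-1}\sqrt{\gamma})$ length per direction. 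For different-direction overlaps, \Cref{farisgoodlem} combined with the fact that $\widehat{v_{\arc{q}}}$ is constant on each arc of $\hbad\cup\rgood$ forces $\arc{q}$ and $\arc{q}'$ to sit near one of the $O(M+\theta^{-1})$ transitions in this coarser partition of $\partial\co(D_t)$; each transition contributes only $O(t^{-1}\xi\sqrt{\gamma})$ of length, which is of lower order. Summing the two contributions over the $M$ directions, and multiplying by $3t^{-1}\xi\sqrt{\gamma}$, produces the claimed bound (the identical argument with the roles of $A,B$ swapped handles the second inequality).

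The main obstacle will be quantifying the unimodality-plus-localization step in the same-direction case, particularly when $\arc{q}_A$ and $\arc{q}'_A$ straddle a vertex of $\partial\co(A)$ contained inside a single arc of $\hbad\cup\rgood$ rather than a tangent point $y,y'$; this will require a small refinement of the tangent-line/cross-section argument from \Cref{notcontained}. The bookkeeping of $t$, $\xi$, $M$ factors and the verification that the constants accumulate to at most $16000$ is routine but delicate. No conceptually new input beyond the tools already developed in \Cref{notcontained} and \Cref{farweightedaveragessection} should be needed.
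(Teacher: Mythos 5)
Your high-level strategy matches the paper's: partition by the $M$ directions, bound the total length on $\partial\co(A)$ supporting good parallelograms that overlap, and multiply by the maximum height. But the mechanism you propose for the crucial localization step is not the one the paper uses, and it has a gap.

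The same-direction/different-direction dichotomy you set up is a detour, and the ``main obstacle'' you flag does not actually arise. For a fixed direction $v$, the arcs $\arc{q}_A$ with $\widehat{v_{\arc{q}}}=v$ form a single connected arc $\arc{r}_A\subset\partial\co(A)$ (this uses that the arcs of $\jj$ with a given direction are contiguous on $\partial\co(D_t)$, and that $\mathcal{A}$-arcs project to $\partial\co(A)$ in cyclic order). By \Cref{bisectingcor} and \Cref{halfalpha}, $v$ makes an angle exceeding $28^{\circ}$ with the supporting line at every point of $\arc{r}_A$, so $\arc{r}_A$ never contains the $v$-tangent points $y,y'$, and same-direction parallelograms on $\arc{r}_A$ are automatically disjoint. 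You never need to consider same-direction overlaps straddling tangent points or vertices inside arcs of $\hbad\cup\rgood$.

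The real content is in what you call the ``different-direction'' case, and here the lemma you invoke is the wrong one. \Cref{farisgoodlem} concerns whether a point of $\partial\co(D_t)$ that is a $t$-weighted average of far-apart boundary points of $\co(A),\co(B)$ lies in $\jgood$; it is the engine behind \Cref{disjointbad} and \Cref{stickingout}, but it gives you no handle on where on $\partial\co(A)$ an overlap can occur. The paper's argument instead goes: if $R_{\arc{q},A}$ (with $\arc{q}_A\subset\arc{r}_A$) has positive-area intersection with $R_{\arc{q}',A}$, then $\arc{q}'_A$ lies outside $\arc{r}_A$; the sum of parallelogram heights bounds the Euclidean distance between $\arc{q}_A$ and $\arc{q}'_A$ by $30t^{-1}\sqrt{\gamma}$; and since a point outside $\arc{r}_A$ must lie on the opposite side of $ox$ or $ox'$ from $\arc{q}_A$ (where $x,x'$ are the endpoints of $\arc{r}_A$), the triangle-geometry estimate \Cref{distlinepoint} applied in the triangle $ozx$ (or $ozx'$) converts that Euclidean bound into $\min(|zx|,|zx'|)\le\frac{9}{\sin 14^{\circ}}\cdot 30t^{-1}\sqrt{\gamma}$ for $z\in\arc{q}_A$. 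This step --- passing from ``$\arc{q}'_A$ lies just outside the connected arc $\arc{r}_A$'' to ``$\arc{q}_A$ is within $O(t^{-1}\sqrt{\gamma})$ arc length of an endpoint'' --- is the idea missing from your sketch, and I do not see how to recover it from \Cref{farisgoodlem} plus direction-constancy alone.

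Two smaller remarks. Your arc-length estimate $O(M\sqrt{\gamma})$ is missing a $t^{-1}$: the balls around the $2M$ endpoints have radius $\sim t^{-1}\sqrt{\gamma}$ because the controlling height is that of the (possibly bad) $R_{\arc{q}',A}$, namely $15t^{-1}\sqrt{\gamma}$, not $O(t^{-1}\xi\sqrt{\gamma})$. And the $O(M+\theta^{-1})$ count of transitions is the wrong object; the argument only needs the $M$ directions, each contributing two endpoints.
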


\begin{proof}[Proof of \Cref{disjointbad}]  The proof we give works verbatim for $B$ and $\mathcal{B}$, so we focus on the case with $A$ and $\mathcal{A}$. We take $d_\tau$ sufficiently small such that \Cref{farisgoodlem} holds, and such that $\sqrt{\gamma} \le \ell$ by \Cref{gammato0}. Because $\arc{q},\arc{q}' \in \jbad$, we have $||v_{\arc{q}}||=||v_{\arc{q}'}||=15\sqrt{\gamma}$.
Consider the arcs $\arc{r,r}'\in\hbad$ such that $\arc{q}\subset \arc{r}$ and $\arc{q}'\subset \arc{r}'$. If $\arc{r}=\arc{r}'$ then $v_\arc{q}=v_{\arc{q}'}$ so $|R_{\arc{q},A}\cap R_{\arc{q}',A}|=0$.
\begin{center}
\begin{tikzpicture}
\draw (0,0)--(-1,1)--(-1,2) node[anchor=south east] {$\le 30 t^{-1}\sqrt{\gamma}$}--(0,4) --(1,4.8)--(2,5)--(3,4)--cycle;

\draw (4,0)--(3.5,0.5)--(3.5,1)-- node[midway,left] {$\ge 97t^{-1}\ell$} (4,3) --(4.5,4)--(5,4.4)--(5.5,4.5)--(6,4)--(6,2)--(4.5,0)--cycle;

\draw (-1,1.75)--(0,1.75)-- node[midway, right] {$R_{\arc{q}',A}$} (0,1.25)--(-1,1.25)--(-1,1.75) node[midway,left] {$\arc{q}_A'$};
\draw (-0.75,2.5)--(-0.5,1)--(-0.25,1.5)-- node[midway, right] {$R_{\arc{q},A}$} (-0.5,3.0) -- node[midway,left] {$\arc{q}_A$}(-0.75,2.5);

\draw (3.5,0.875)--(4,0.875)--node[midway, right] {$R_{\arc{q}'}$} (4,0.625)--(3.5,0.625)--node[midway,left] {$\arc{q}'$}(3.5,0.875);

\draw (4.125,3.25)-- (4.25,2.5)--(4.375,2.75)-- node[midway, right] {$R_{\arc{q}}$} (4.25,3.5) -- node[midway,left] {$\arc{q}$}(4.125,3.25);

\draw (1,3) node {$co(A)$};
\draw (5,2) node{$co(D_t)$};
\end{tikzpicture}
\end{center}

Assume now that $\arc{r}\ne \arc{r}'$. In this case, the distance between $\arc{q}$ and $\arc{q}'$ is at least $97t^{-1}\ell$. Indeed, otherwise there exists a point $p\in \arc{q}$ and $p' \in \arc{q'}$ such that $|pp'| \le 97t^{-1}\ell$. Let $x$ be a $(\theta,\ell)$-bad point such that $|xp|\le 3\ell$. Then $B(x,100t^{-1}\ell)$ contains $p$, and by the triangle inequality it also contains $p'$. This implies $p,p'$ are contained in the same arc of $\hbad$, so $\arc{r}=\arc{r'}$, a contradiction.

Assuming for the sake of contradiction that $|R_{\arc{q},A}\cap R_{\arc{q}',A}|>0$, then there exists a point $z\in R_{\arc{q},A}\cap R_{\arc{q}',A}$. Then because $z$ is within distance $t^{-1}||v_{\arc{q}}||=15t^{-1   }\sqrt{\gamma}$ of $\arc{q}_A$ and within distance $t^{-1}||v_{\arc{q'}}||=15t^{-  1 }\sqrt{\gamma}$ of $\arc{q}_A'$, we have by the triangle inequality that the distance between $\arc{q}_A$ and $\arc{q}_A'$ is at most $30t^{-1}\sqrt{\gamma}\le 30t^{-1}\ell$.

By the above, there either exists $p\in \arc{q}$ and $z_A \in \arc{q}_A$ such that $|pz_A| \ge 33t^{-1}\ell$, or there exists $p' \in \arc{q'}$ and $z_A' \in \arc{q}_A'$ such that $|p'z_A'| \ge 33t^{-1}\ell$. Suppose without loss of generality the first case holds. Then $p=tx_A+(1-t)y_B$ for some point $x_A \in \arc{q}$ and $y_B=p_{\arc{q},B}$, and $|x_Az_A| \le \xi t^{-1} \sqrt{\gamma}$ since this is an upper bound for the length of $\arc{q}_A$. Therefore, $$|x_Ay_B| \ge |x_Ap| \ge |pz|-|x_Az| \ge 20t^{-1}\ell,$$ so by \Cref{farisgoodlem}, $p\in\arc{q}\in \jgood$, a contradiction.

\end{proof}

\begin{proof}[Proof of \Cref{overlapping}]
The proof we give works verbatim for $B$ and $\mathcal{B}$, so we focus on the case with $A$ and $\mathcal{A}$. Assume $d_{\tau}$ is sufficiently small so that \Cref{tangentangle} is true, and such that $\frac{99}{100}K\subset \co(A),\co(B),\co(D_t)\subset K$ by \Cref{etascaleinside}. Fix one of the $M$ directions $v$. Consider all arcs $\arc{q}\in \jj\cap \mathcal{A}$ with the direction vector $\widehat{v_\arc{q}}=v$. Let $\arc{r}_A$ be the union of all the corresponding arcs $\arc{q}_A$. Note that $\arc{r}_A$ forms a connected arc of $\partial co(A)$. Let $x$ and $x'$ be the endpoints of this arc.

For any point $z\in \arc{r}_A$, we claim that $ |xz| \le \frac{9}{\sin(14^{\circ})}\operatorname{dist}(z,ox)$. Indeed, by \Cref{distlinepoint}, since $|xz| \le 9|oz|$ (this follows as the diameter of $\co(A)\subset T'$ is at most $\frac{2}{\sqrt{3}}$ by \Cref{lengthsandareas}, and $|oz| \ge \frac{99}{100}\frac{1}{\sqrt{12}}$) it suffices to show that $\angle ozx \in (28^{\circ},180^{\circ}-28^{\circ})$. By \Cref{tangentangle}, we know that the supporting lines $l_x,l_z$ to $\co(A)$ at $x,z$ make an angle of at most $180^{\circ}-29^{\circ}$ with $ox,oz$ respectively. Therefore, we have that $\angle ozx, oxz \le 180^{\circ}-29^{\circ}$.  By \Cref{halfalpha}, $ox,oz$ each make an angle of at most $\frac{1}{2}\alpha$ with $v$. Therefore, $\angle xoz \le \alpha$. Because the sum of the angles in $xoz$ is  $180^{\circ}$, this implies that $\angle ozx \in (29^{\circ}-\alpha,180^{\circ}-29^{\circ})\subset (28^{\circ},180^{\circ}-28^{\circ})$.

For every $y$ outside of $\arc{r}_A$, we have either $y$ is on the opposite side of $ox$ or $y$ is on the opposite side of $oy$ to $\arc{r}_A$.
This implies that $\min(zx,zx')\le \frac{9}{\sin(14^{\circ})}|yz|$ as $y$ lies either on the other side of $ox$ or of $ox'$ to $z$.

We claim that if $R_{\arc{q},A}$ with $\arc{q}_A\subset \arc{r}_A$ intersects in positive area with some $R_{\arc{q}',A}$, then $\arc{q}_A,\arc{q}'_A\subset (B(x,1200t^{-1}\sqrt{\gamma})\cup B(x',1200 t^{-1}\sqrt{\gamma}))$.
Indeed, first note that if $\arc{q}_A'\subset \arc{r}_A$, then $\widehat{v_{\arc{q}}}=\widehat{v_{\arc{q}'}}$, forbidding a positive area intersection. Hence $\arc{q}_A$ lies outside of $\arc{r}_A$. Note that if $|R_{\arc{q},A}\cap R_{\arc{q}',A}|>0$, then the distance between $\arc{q}_A$ and $\arc{q}_A'$ is at most $30t^{-1}\sqrt{\gamma}$ by the triangle inequality (as the heights of these parallelograms are each at most $15t^{-1}\sqrt{\gamma}$). From this, we conclude that  \begin{align*}
\min(\operatorname{dist}(\arc{q}_A,x),\operatorname{dist}(\arc{q}_A,x')) &\le \frac{9}{\sin(14^{\circ})}30t^{-1}\sqrt{\gamma}\le 1199t^{-1}\sqrt{\gamma}.
\end{align*}
Because $$|\arc{q}_A| \le \xi t^{-1}\sqrt{\gamma} \le t^{-1}\sqrt{\gamma},$$
the conclusion follows.

We have the length of $\partial \co(A) \cap (B(x,1200t^{-1}\sqrt{\gamma})\cup B(x',1200t^{-1}\sqrt{\gamma}))$ is at most $4800\pi t^{-1}\sqrt{\gamma}$, the sum of the perimeters of the two balls. Hence for each direction $v$ we have that $$\sum_{\arc{q}\in\jgood\cap \mathcal{A},\widehat{v_{\arc{q}}}=v\text{ and } \exists  \arc{q}'\in \mathcal{A}\setminus \{\arc{q}\}\text{ with } |R_{\arc{q},A}\cap R_{\arc{q}',A}|>0}\big|R_{\arc{q},A}\big|\leq 4800\pi t^{-1}\sqrt{\gamma}\cdot \xi \sqrt{\gamma}=16000 t^{-1}\xi \gamma.$$
\end{proof}

\section{Proof of \Cref{mainthm} and \Cref{mainthm'}}
\label{puttingtogethersection}
With all the machinery in place, we are now ready to tackle  \Cref{mainthm'}. We note that \Cref{mainthm} and \Cref{mainthm'} are formally equivalent by replacing $A$ with $\frac{1}{t}A$ and $B$ with $\frac{1}{1-t}B$.

\begin{proof}[Proof of \Cref{mainthm'}]
Fix $\epsilon >0 $ and choose $\xi$ such that $\epsilon\geq (t^2+(1-t)^2)(25t^{-1}M\xi^2+16000t^{-1} M\xi)$. Choose $\theta$ depending on $\xi$ given by \Cref{guarenteelem}. Choose $\ell$ depending on $\theta$ given by \Cref{partitionlem}. Recall that $M, \alpha$ are universal constants chosen above. Finally, take $d_\tau$ sufficiently small so that \Cref{partitionlem}, \Cref{containedloss}, \Cref{notcontained}, \Cref{disjointbad} and \Cref{overlapping} hold.  Recall by \Cref{containedloss} that 
$$\bigg|\co\left(D_t\right)\setminus D_t\bigg|\leq t^{2}\sum_{\arc{q} \in \mathcal{A}}|R_{\arc{q},A}\setminus A|+(1-t)^2\sum_{\arc{q}\in \mathcal{B}}|R_{\arc{q},B}\setminus B|.$$
We split the first summand on the right into three parts; one for those $\arc{q}$ such that $R_{\arc{q},A}\not\subset co(A)$ (collect them in a set $X_A$), one for those $\arc{q}\in \jgood$ such that $R_{\arc{q},A}$ intersects non trivially with $R_{\arc{q},A}$ for some $\arc{q}'\neq \arc{q}$ (collect them in a set $Y_A$), and all the other $\arc{q}$ (collect them in a set $Z_A$). Note that the $R_{\arc{q},A}$ in the last sum are disjoint by \Cref{disjointbad} and contained in $co(A)$, so $\sum_{\arc{q}\in Z_A}|R_{\arc{q},A}\setminus A|\leq |co(A)\setminus A|$.
Combining \Cref{notcontained} and \Cref{overlapping} we find:
\begin{align*}
\sum_{\arc{q}\in \mathcal{A}}|R_{\arc{q},A}\setminus A|&\leq \sum_{\arc{q}\in X_A}|R_{\arc{q},A}|+\sum_{\arc{q}\in Y_A}|R_{\arc{q},A}|+\sum_{\arc{q}\in Z_A}|R_{\arc{q},A}\setminus A| \\
&\leq 25t^{-1}M\xi^2\gamma+16000t^{-1} M\xi \gamma+|co(A)\setminus A|.
\end{align*}
We similarly obtain
$$\sum_{\arc{q}\in \mathcal{B}}|R_{\arc{q},B}\setminus B|\le 25t^{-1}M\xi^2\gamma+16000t^{-1} M\xi \gamma+|\co(B)\setminus B|.$$

Hence, (recalling $\gamma=t^2|\co(A)\setminus A|+(1-t)^2|\co(B)\setminus B|$), we have
\begin{align*}\bigg|\co\left(D_t\right)\setminus D_t\bigg| &\leq (t^2+(1-t)^2)(25t^{-1}M\xi^2+16000t^{-1}M\xi)\gamma+t^2|\co(A)\setminus A|+(1-t)^2|\co(B)\setminus B|\\ &\leq \left(1+\epsilon\right)\big(t^2|\co(A)\setminus A|+(1-t)^2|\co(B)\setminus B|\big).
\end{align*}
\end{proof}

\section{Proof that \Cref{mainthm} implies \Cref{sharpBM}}
\label{implicationsection}
Finally, what remains is to deduce \Cref{sharpBM}. Note that we now return to $A$ and $B$ with unequal areas.

\begin{proof}[Proof that \Cref{mainthm} implies \Cref{sharpBM}]

 By \cite{Figalli09,Figalli10amass} and \Cref{OmegaEquiv}, there is a constant $\widetilde{C}$ such that
$$\frac{|K_A\setminus \co(A)|}{|\co(A)|}+\frac{|K_B\setminus \co(B)|}{|\co(B)|} \le \widetilde{C}\tau^{-\frac{1}{2}}_{conv}\sqrt{\delta_{conv}}$$ where
$\delta_{conv}=\frac{|\co(A+B)|^{\frac{1}{2}}}{|\co(A)|^{\frac{1}{2}}+|\co(B)|^{\frac{1}{2}}}-1,$ and $t_{conv}=\frac{|\co(A)|^{\frac{1}{2}}}{|\co(A)|^{\frac{1}{2}}+|\co(B)|^{\frac{1}{2}}} \in [\tau_{conv},1-\tau_{conv}]$. Also, by \Cref{FigJerBm} by taking $d_\tau$ sufficiently small, we may assume that $\frac{|\co(A)|}{|A|}$,  $\frac{|\co(B)|}{|B|}$, and $\frac{|\co(A+B)|}{|A+B|}$ are as close to $1$ as we like, so in particular we may assume that $\tau_{conv}^{-1}\le 2\tau^{-1}$. Thus it suffices to prove that $\delta_{conv}\le \delta$ and $\frac{|\co(A)\setminus A|}{|\co(A)|}+\frac{|\co(B)\setminus B|}{|\co(B)|} \le 5\tau^{-1}\delta$. We have
\begin{align*}& \delta-\delta_{conv}\\
&\ge \frac{|A|^{\frac{1}{2}}+|B|^{\frac{1}{2}}}{|\co(A)|^{\frac{1}{2}}+|\co(B)|^{\frac{1}{2}}}\delta-\delta_{conv}\\
&=\frac{1}{{|\co(A)|^{\frac{1}{2}}+|\co(B)|^{\frac{1}{2}}}}\left(|\co(A)|^{\frac{1}{2}}-|A|^{\frac{1}{2}}+|\co(B)|^{\frac{1}{2}}-|B|^{\frac{1}{2}}-(|\co(A+B)|^{\frac{1}{2}}-|A+B|^{\frac{1}{2}})\right)\\
&=\frac{1}{{|\co(A)|^{\frac{1}{2}}+|\co(B)|^{\frac{1}{2}}}}\left(\frac{|co(A)\setminus A|}{|\co(A)|^{\frac{1}{2}}+|A|^{\frac{1}{2}}}+\frac{|\co(B)\setminus B|}{|\co(B)|^{\frac{1}{2}}+|B|^{\frac{1}{2}}}-\frac{|\co(A+B)\setminus(A+B)|}{|\co(A+B)|^{\frac{1}{2}}+|A+B|^{\frac{1}{2}}}\right)\\
&\ge\frac{1}{|\co(A)|^{\frac{1}{2}}+|\co(B)|^{\frac{1}{2}}}\left(\frac{|\co(A)\setminus A|}{|\co(A)|^{\frac{1}{2}}+|A|^{\frac{1}{2}}}+\frac{|\co(B)\setminus B|}{|\co(B)|^{\frac{1}{2}}+|B|^{\frac{1}{2}}}-\frac{(1+\epsilon)(|\co(A)\setminus A|+|\co(B)\setminus B|)}{|\co(A+B)|^{\frac{1}{2}}+|A+B|^{\frac{1}{2}}}\right).
\end{align*}
Suppose $t\le \frac{1}{2}$ and take $\epsilon=\frac{\tau}{2}$. We can write this last line as $m_A\frac{|\co(A)\setminus A|}{|\co(A)|}+m_B\frac{|\co(B)\setminus B|}{|\co(B)|}$ with
\begin{align*}m_A=&t\frac{|\co(A)|}{|A|}\cdot\frac{|A|^{\frac{1}{2}}+|B|^{\frac{1}{2}}}{|\co(A)|^{\frac{1}{2}}+|\co(B)|^{\frac{1}{2}}}\left(\frac{1}{\frac{|\co(A)|^{\frac{1}{2}}}{|A|^{\frac{1}{2}}}+1}-\frac{1}{\frac{|\co(A+B)|^{\frac{1}{2}}}{|A+B|^{\frac{1}{2}}}+1}\cdot\frac{(1+\epsilon)t}{(1+\delta)}\right)\\
&\ge t\frac{|\co(A)|}{|A|}\cdot\frac{|A|^{\frac{1}{2}}+|B|^{\frac{1}{2}}}{|\co(A)|^{\frac{1}{2}}+|\co(B)|^{\frac{1}{2}}}\left(\frac{1}{\frac{|\co(A)|^{\frac{1}{2}}}{|A|^{\frac{1}{2}}}+1}-\frac{1}{\frac{|\co(A+B)|^{\frac{1}{2}}}{|A+B|^{\frac{1}{2}}}+1}\cdot\frac{3}{4}\right) \end{align*}
and
\begin{align*}m_B&=(1-t)\frac{|\co(B)|}{|B|}\cdot\frac{|A|^{\frac{1}{2}}+|B|^{\frac{1}{2}}}{|\co(A)|^{\frac{1}{2}}+|\co(B)|^{\frac{1}{2}}}\left(\frac{1}{\frac{|\co(B)|^{\frac{1}{2}}}{|B|^{\frac{1}{2}}}+1}-\frac{1}{\frac{|\co(A+B)|^{\frac{1}{2}}}{|A+B|^{\frac{1}{2}}}+1}\cdot\frac{(1+\epsilon)(1-t)}{(1+\delta)}\right)\\
&\ge (1-t)\frac{|\co(B)|}{|B|}\cdot\frac{|A|^{\frac{1}{2}}+|B|^{\frac{1}{2}}}{|\co(A)|^{\frac{1}{2}}+|\co(B)|^{\frac{1}{2}}}\left(\frac{1}{\frac{|\co(B)|^{\frac{1}{2}}}{|B|^{\frac{1}{2}}}+1}-\frac{1}{\frac{|\co(A+B)|^{\frac{1}{2}}}{|A+B|^{\frac{1}{2}}}+1}\cdot (1-\frac{\tau}{2})\right).
\end{align*}
Both of these are at least $\frac{1}{5}\tau$ assuming $d_\tau$ is sufficiently small. Thus we get $\delta-\delta_{conv} \ge \frac{1}{5}\tau(\frac{|\co(A)\setminus A|}{|\co(A)|}+\frac{|\co(B)\setminus B|}{|\co(B)|})$, which shows $\delta_{conv} \le \delta$ and $\frac{|\co(A)\setminus A|}{|\co(A)|}+\frac{|\co(B)\setminus B|}{|\co(B)|} \le 5\tau^{-1}\delta$.
\end{proof}

\appendix

\section{Equivalence of measures $\omega$ and $\alpha$}
\label{OmegaEquiv}
In this appendix, we show that in two dimensions the measures $\omega$ and $\alpha$ are commensurate for convex sets when $d_\tau$ is sufficiently small. Recall from the introduction that we always have $\alpha \le 2\omega$.

\begin{prop} For all $\tau\in (0,\frac12]$, there exists a $d_\tau>0$ such that the following holds. If $E,F\subset \mathbb{R}^2$ are convex with $t(E,F)\in [\tau,1-\tau]$ and $\delta(E,F)\leq d_\tau$, then
$$\omega(E,F)\leq 21 \alpha(E,F).$$
\end{prop}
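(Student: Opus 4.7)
The plan is to certify $\omega(E,F)$ by an explicit pair of homothetic convex sets built from the Fraenkel-optimal translate of $F$. Let $\alpha := \alpha(E,F)$, and pick $s>0$, $x\in\mathbb{R}^2$ so that $F' := sF + x$ satisfies $|F'|=|E|$ and $|E \triangle F'| = \alpha|E|$. Set
\[
K_E := \co(E \cup F'), \qquad K_F := s^{-1}(K_E - x).
\]
Then $K_E, K_F$ are homothetic convex with $E \subset K_E$ and $F \subset K_F$ (as $F' \subset K_E$). Using $|F'|=s^2|F|=|E|$, a scaling calculation gives
\[
\frac{|K_E \setminus E|}{|E|} = \frac{|K_F \setminus F|}{|F|} = \frac{|K_E|-|E|}{|E|},
\]
so the statement reduces to $|K_E|\le (1+21\alpha)|E|$. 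Writing $|K_E| = |E\cup F'| + P$ with $P := |K_E \setminus (E\cup F')|$, and bounding $|E\cup F'| \le (1+\alpha)|E|$, the task further reduces to the pocket estimate $P \le 20\alpha|E|$.

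To obtain the pocket bound I would use that the small-deficit hypothesis forces $E$ and $F'$ to be geometrically ``fat''. After applying the preliminary affine transformation of \Cref{Setup} and taking $d_\tau$ sufficiently small, \Cref{etascaleinside} gives $(1-\eta)T \subset E, F' \subset T'$, so both sets have inradius bounded below and diameter bounded above by universal constants. The region $K_E \setminus (E\cup F')$ decomposes into convex pockets, each bounded on the outside by a common external tangent segment to $E,F'$ and on the inside by arcs of $\partial E$ and $\partial F'$ meeting at a point $c \in \partial E \cap \partial F'$. The geometric heart is the claim that in this fat regime each pocket has area at most a universal constant times the area of the adjacent lens region of $E\triangle F'$ sharing its bounding arcs: convexity together with the inradius lower bound prevents a small lens from sitting alongside a disproportionately large pocket. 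Summing over all boundary crossings yields $P \le C_0|E\triangle F'| = C_0\alpha|E|$ with $C_0$ an explicit universal constant not exceeding $20$ in our normalized coordinates, which completes the proof.

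The main obstacle is making the pocket-to-lens comparison rigorous with the explicit constant and without a priori control on the number of boundary crossings of $\partial E$ and $\partial F'$, which may be arbitrarily large for close polygonal $E, F'$. A robust strategy is to prove a pointwise bound: for each $p \in \partial K_E$ inside a pocket, the perpendicular distance from $p$ outward to $E \cup F'$ is controlled by the perpendicular distance into the adjacent lens across the bounding arc, with constant depending only on the inradius/diameter ratio. A layer-cake integration then converts this pointwise inequality into the desired area bound, even with many crossings, and closes the argument.
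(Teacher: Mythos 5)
Your reduction is correct, and in fact slightly sharper than the paper's: by constructing the homothetic certificate directly --- $K_E=\co(E\cup F')$ with $F'$ the Fraenkel-optimal equal-volume translate and $K_F=s^{-1}(K_E-x)$ --- and bounding $\omega$ by the \emph{maximum} of the two normalized deficits rather than their sum (as the paper does via $|K\setminus E|+|K\setminus F|$), you only need the pocket bound $P\le 20\alpha|E|$, where the paper establishes $P\le 10\alpha|E|$ and loses a factor two at the $\omega$ step.

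The trouble is that the pocket bound $P\le 20\alpha|E|$ is the entire geometric content of the proposition, and you do not prove it. You state a pocket-to-lens area comparison with an asserted constant $C_0\le 20$ and then, in your final paragraph, correctly identify the obstruction --- there is no a priori bound on the number of boundary crossings of $\partial E$ and $\partial F'$ --- and propose a ``robust strategy'' (a pointwise transversal-distance inequality plus layer-cake integration) without carrying it out or justifying the constant. This is exactly the step the paper handles concretely and differently. Rather than decomposing $K_E\setminus(E\cup F')$ pocket by pocket, the paper fans $K=\co(E\cup F)$ out from the barycenter $o$ of a maximal inscribed triangle $T\subset E\cap F$ into triangles $opq$ over consecutive vertices $p,q$ of $K$. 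In each triangle with $p\in E$, $q\in F$ it picks a single crossing $i\in\partial E\cap\partial F\cap opq$, sets $p'=op\cap qi$ and $q'=oq\cap pi$, shows the pocket piece in that triangle is contained in $piq$ while the disjoint lens pieces $(pip')^\circ,(qiq')^\circ$ lie in $E\Delta F$, and obtains $|piq|\le 10|pip'|$ from the ratio $|op|/|op'|\le 10$, which follows from $T\subset E\cap F$ and $E,F\subset 10T$, itself a consequence of $|E\Delta F|\le\frac12$, i.e.\ $\alpha\le\frac1{10}$. The fan decomposition is canonical, the per-triangle estimate is uniform, and no count of crossings ever enters --- precisely the issue your sketch leaves open. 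As a minor point, your appeal to \Cref{etascaleinside} is not directly available here: that proposition lives in the body's $A,B,K$ normalization and requires smallness of $\delta$, whereas the appendix derives all the fatness it needs from $\alpha(E,F)\le\frac{1}{10}$ alone.
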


\begin{proof}
Let $d_\tau$ be sufficiently small so that by \cite{Figalli09}, $\alpha(E,F)\le \frac{1}{10}$. We never use any other property of $\delta(E,F)$ or $t(E,F)$. The quantitites $\omega,\alpha$ are invariant under affine transformations of $E$ and $F$ separately, so by applying these transforms we can take $E,F$ to have equal volumes, translated so that $\alpha(E,F)=\frac{|E\Delta F|}{|E|}$. After a further affine transformation, we may assume that the maximal triangle $T\subset E\cap F$ is a unit equilateral triangle. Note that because $T$ is maximal, we have $T\subset E\cap F \subset -2T$. Take $K=\co(E \cup F)$. Note that $|E\Delta F| \le \frac{1}{18}|E\cap F| \le \frac{1}{18}|-2T| \le \frac12$.



First, we claim that $E,F\subset 10C$. Indeed, if any point $x\in E$ lies in in $\partial 10T$ then $|E\Delta F| \ge |co(x \cup T)\setminus (-2T)| \ge 1$ , a contradiction.

To show $\omega(E,F)\le 11\alpha(E,F)$, it suffices to prove
$$|(K\setminus (A \cup B))|\le 10|(A \Delta B)|.$$
Indeed, if this is true, then
$$|E|\cdot\omega(E,F)\leq |K\setminus E|+|K\setminus F|=2|K\setminus (E\cup F)|+|E \Delta F| \le 21|E \Delta F|= |E| \cdot 21\alpha (E,F).$$

We consider the triangle $opq$ with $p,q$ consecutive vertices of $K$. These triangles partition the area of $K$, so it suffices to show for each such triangle that $$|(K\setminus (E \cup F))\cap opq|\le 10|(E \Delta F)\cap opq|.$$

To obtain this, we note that if $p,q \in E$ or $p,q \in F$ then the left hand side is zero and the inequality holds. Suppose now that $p \in E$ and $q \in F$ (the other case is identical). Then there must be a point $i\in \partial co(A)\cap \partial co(F)$ which lies in the triangle $opq$. Let $q'$ be the intersection of the ray $pi$ with segment $oq$, and let $p'$ be the intersection of the ray $qi$ with $op$. Because $o,p \in E$ we also have $p' \in E$, and similarly $q' \in F$. We note that $E,F\subset 10C$ implies $|op'| \ge \frac{1}{10}|oq|$ and $|oq'| \ge \frac{1}{10}|oq|$.
\begin{center}
\begin{tikzpicture}
\draw (-1,2) node[anchor=east] {$p$}--(0,0) node[anchor=north] {$o$}--(1,2) node[anchor=west] {$q$}--cycle;

\draw (0.4,1.45) node{$i$};

\draw (-1,2) --(0.8,1.6) node[anchor=west] {$q'$};
\draw (1,2)--(-0.6,1.2) node[anchor=east] {$p'$};
\end{tikzpicture}
\end{center}
If any point $x$ in the strict interior  $(qiq')^{\circ}$ lies in $E$, then $i$ lies in the strict interior of $xpo\subset E$, contradicting that $i$ lies on $\partial E$. Also, $qiq'\subset oqi \subset F$. Thus $(qiq')^{\circ}\subset E\Delta F$. Similarly  $(pip')^{\circ}\subset E \Delta F$. Finally, we note that $(K\setminus (E\cup F))\cap opq\subset piq$, so it suffices to show that
$$|piq|\le 10(|pip'|+|qiq'|).$$

To show this, suppose without loss of generality that $|oiq| \le |oip|$. Then 
\begin{align*}
    \frac{|piq|}{|oiq|}=\frac{|pip'|}{|oip'|}
\end{align*}
so
$$|piq|=|pip'| \frac{|oiq|}{|oip'|}\le |pip'|\frac{|oip|}{|oip'|}=|pip'|\frac{|op|}{|op'|}\le 10|pip'|.$$

\end{proof}

\bibliographystyle{abbrv}
\bibliography{references}

\end{document}